\documentclass[11pt]{amsart}
 \usepackage{amssymb,amsmath,latexsym,amsthm}

 \usepackage{times}
 \usepackage{dsfont}
 \usepackage[all]{xy}

 \usepackage[english,francais]{babel}


 \usepackage{url} 
 \usepackage{hyperref}

 \usepackage{color}

 \definecolor{greenbf}{rgb}{0, 0.7 ,0.3}
 
  

 \hypersetup{	
colorlinks=true, 
breaklinks=true, 
urlcolor= blue, 
linkcolor= red,	
citecolor= {greenbf},	
}



\newtheorem{theorem}{{Theorem}}[section]
\newtheorem{proposition}[theorem]{{Proposition}}
\newtheorem{isom.ext}[theorem]{{Trivial isometric extension}}
\newtheorem{definition}[theorem]{{Definition}}
\newtheorem{lemma}[theorem]{{Lemma}}
\newtheorem{corollary}[theorem]{{Corollary}}
\newtheorem{fact}[theorem]{{\sc Fact}}
\newtheorem{remark}[theorem]{{Remark}}
\newtheorem{example}[theorem]{{Example}}

\newtheorem{geod.ext}[theorem]{{Geodesic extension}}






\def\SO{{\sf{SO}}}
 
\def\SL{{\sf{SL}}}

 \setcounter{tocdepth}{1}



\textheight 21cm

\input amssym.def
\input amssym.tex





\begin{document}

\selectlanguage{english}



\title{ Geometry of warped products}


\author[A. Zeghib]{Abdelghani Zeghib }
\address{UMPA, CNRS, 
\'Ecole Normale Sup\'erieure de Lyon\hfill\break\indent
46, all\'ee d'Italie
69364 LYON Cedex 07, FRANCE}
\email{abdelghani.zeghib@ens-lyon.fr 
\hfill\break\indent
\url{http://www.umpa.ens-lyon.fr/~zeghib/}}
\date{\today}
\maketitle



\begin{abstract}  

This is a survey on the geometry  of warped products, without, or essentially 
with only soft, calculation. Somewhere in the paper, the goal was to give a 
synthetic
 account since existing approaches are rather analytic. 
Somewhere else,
we have interpreted statements, 
especially by means 
of 
a physical terminology.  This is essentially heuristic, but we think
it might be   helpful  in both directions, that is, in 
 going from a synthetic geometrical
language to a relativistic one, and vice-versa.

\end{abstract}

\tableofcontents

\section{Introduction} 

The warped product is 
  a  construction 
in the class of
pseudo-Riemannian manifolds generalizing the direct product, and
defined as follows.  
Let   $(L, h)$ and 
$(N, g)$ be   two pseudo-Riemannian 
  manifolds 
 and $ w: L \to  {\Bbb R}^+-\{0\}$
a {\it warping} function.  
  The warped
product \mbox{$M= L\times_wN$}, is the topological
product 
  $L \times N$, endowed with
the  metric  
$h \bigoplus w g$.
The   metric on 
$M$ will be 
usually denoted by  $\langle,\rangle $.
Here, we will be  especially interested in the case where $M$ 
is Lorentzian (a spacetime) and sometimes Riemannian.

\subsubsection*{Previous works.}  There are several references 
on warped products, we mention a few:
 \cite{A-B, 
BEP, C-C,Gau, O'n, Pra}. Some of them are, like the present 
one,  surveys,  but, in general, they originate from 
different points of view. The author met the subject while 
  working on \cite{Ze.gafa}
   \footnote{The present article exists in fact since 1999,  it  was destined to be published in the proceeding of a conference on pseudo-Riemannian geometry. 
 I came back recently around  the subject and discovered   interest of   some people who quote it; that is why I estimated it is  worthwhile to revive this paper. }.



\subsubsection*{Terminology.} Usually, $M$ is seen as a  bundle over
 $L$ (the {\it basis}) with {\it fiber} $N$. 
This point of view is surely justified. However,
it turns out that one sometimes needs to 
project onto $N$. 
Indeed, the distortion of the structure comes 
from the transverse structure of the foliation 
determined by the factor $L$, the study of which involves projecting 
on $N$ (see \S \ref{local.product}). Here, motivated by the  analogy with 
a group theoretic situation (justified by \ref{isometry.extension}), and to 
emphasize  its 
importance, we will call 
 $N$   the {\bf normal} 
factor 
of the warped product.

Let us introduce another useful  terminology
in this text. A warped product $M = L \times_w N$
is called a (global) {\bf generalized  Robertson-Walker} ({\bf GRW} in short) space, 
provided  $N$ is a pseudo-Riemannian manifold of 
constant curvature (see \cite{San} for 
another use of this terminology). Recall that classical 
Robertson-Walker spaces correspond to the case where 
$N$ is a Riemannian manifold of constant curvature, and
$L$ is an interval  of ${\Bbb R}$ endowed with the metric 
$-dt^2$.
These Lorentz spacetimes model an expanding universe.

\subsubsection*{Interests.} The warped product construction has at least 
two interesting properties. 
Firstly, it has a  practical interest, since 
it  gives 
sophisticated examples  from simple
 ones: calculation
on warped products 
is easy (but non-trivial).  Secondly, 
  having a large symmetry group 
generally  involve a warped product structure. 
Actually, being ``simple'' and  having a 
large  symmetry group, are    criterion of beauty.
Therefore, imposing a warped product structure 
is somewhat a formulation of a philosophical 
and an  aesthetical principle.

\subsection{Two  fundamental  extension facts.}
As in the case of  direct products, warped products enjoy
the two following properties:

$\bullet $ Dynamical property: extension of isometries.

$\bullet$ 
Geometric (static)  property: extension of
 geodesic submanifolds.

In the present  
article, we will specially 
 investigate the first point. (We hope 
to consider the second one  in a subsequent 
paper).


 Let $f: N \to N$ be  a diffeomorphism.  
Consider the  {\it trivial} (or product) extension:  $$\bar{f}: (x, y) \in
L \times N \to (x, f(y)) \in L\times N$$ 
With the notations above, 
we have 
$ \bar{f}^*(h \bigoplus wg) = h \bigoplus w f^*g$.
In particular:  

\begin{isom.ext}
\label{isometry.extension}
The trivial extension $\bar{f}$ is 
an isometry of 
$L\times_w N$ iff $f$ is an isometry of $N$.
\end{isom.ext}

  Warped products  are reminiscent of
semi-direct products in the category of groups, 
the factor $N$ playing the role of the normal subgroup.  
Indeed, Isom$(N)$ is a normal subgroup  of 
Isom$^\times(L \times_w N)$, which designs the group
of isometries of 
$L\times_wN$  preserving the topological product. 
This justifies 
 calling  $N$   the {\bf normal} factor
of the warped product. \\

The  following  is the second extension fact 
which  will be proved in \S \ref{transverse}.

\begin{geod.ext} 
\label{geodesic.extension}
Let $M = L \times_wN$ be a warped product, and $S $
 a submanifold of $N$. Then $S$ is geodesic in $N$ iff 
$L \times S $ is geodesic in $M$.

\end{geod.ext}

As a corollary, we obtain that a warped product has 
many non-trivial (i.e. with dim $>1$) geodesic submanifolds.
This is the  starting point of  rigidity of 
GRW spaces.


\subsection{Content and around the article.} The article contains personnel
points of view  rather than a standard survey on warped products.
 One fact which seems to be new in our 
approach here, is to consider local warped product structures, a notion which
belongs to the domain of foliations. This leads us in this paper to 
fix some known and used characterizations   (but sometimes difficult to find
in literature) of foliations with some transverse or tangential geometric 
structures (geodesic, umbilical, transversally pseudo-Riemannian...).

In another direction, one may also consider analytic pseudo-Riemannian manifolds, 
with a somewhere defined warped product structure, i.e. admitting an open set which
 is a warped product. 
In the direct (non-warped) product case, an analytic continuation
is easily defined in the whole universal cover. (The reason is that we get parallel 
plane fields which we extend by parallel transport).

This is no longer true 
in the warped case.  Firstly, in general, there is no mean to ``extend analytically''
(somewhere defined) foliations, since this is not uniquely defined even in the simply
 connected case, 
and also, because this would at most give rise to singular objects. 

In the case of a
somewhere defined    warped
product structure, we have a kind of a ``rigid geometric structure'', and one may use 
it as a model. One then considers points admitting charts isometric to it.
We will meet in \S \ref{proof.theorem} a situation where the technical realization of this idea works well.

Actually, one solves Einstein equations (i.e. spacetimes with some 
geometry) in charts, which are, thanks to reasonable symmetry hypotheses, endowed 
with a warped product structure.  One, in general, observes singularity of the metric
written in these co-ordinates systems.  It is usual to call such ``singularities'' {\bf inessential}.
From our point of view, they are still singularities, but for the warped
product structure. So, it is an interersting and natural problem 
to study the behaviour of analytic extension of  somewhere defined warped product: their
degenerations (horizons!) and their  regenerations (but  in a different  physical 
nature). That is a question that the present  article would suggest to consider
and study in a systematic way, however,  we do here only a few in the  particular case 
developed in  \S \ref{proof.theorem}.


\subsection{Preliminary examples}

\subsubsection{Polar coordinates.}
\label{polar.1}
This example illustrates how the presence of a 
warped product structure is related to symmetry, and how then, 
it is useful, 
as are the polar coordinates. Let us start with $M^n$ a Riemannian
 manifold, and
let $x \in M$.
Locally 
$M-\{ x \}$ is isometric to 
${\Bbb R}^+ \times S^{n-1}$, endowed with 
a metric 
$g = dr^2 \bigoplus  g_r$, where $g_r$ is a metric on 
$S^{n-1}$. 
Observe that 
$O(n)$ acts naturally by $(A.(r,u)) \to (r, A(u))$.

\begin{fact}
Polar coordinates determine 
 a warped product, that is, there is a metric $g$ on $S^{n-1}$ 
and a function $w(r)$ such that $g_r = w(r)g$, 
iff, 
the natural  action of $O(n)$ is isometric. It then follows
that   
 $g$ is, up to a multiplicative
factor,  the canonical metric on 
$S^{n-1}$, and that all the 2-planes at $x$ have the same
sectional curvature.

\end{fact}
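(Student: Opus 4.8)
The plan is to reduce both implications to the representation theory of the isotropy action combined with the Gauss lemma. First observe that since each $A \in O(n)$ fixes the radial coordinate $r$, the map $A\cdot(r,u) = (r,Au)$ automatically preserves $dr^2$, so it is an isometry of $dr^2 \oplus g_r$ precisely when $A^* g_r = g_r$ for every $r$. Thus ``the natural action of $O(n)$ is isometric'' is equivalent to ``each $g_r$ is $O(n)$-invariant,'' and the whole equivalence becomes a comparison between $O(n)$-invariance of the family $(g_r)$ and its proportionality to a single fixed metric $g$.

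For the implication (isometric $\Rightarrow$ warped product), I would use that $O(n)$ acts transitively on $S^{n-1}$ with isotropy $O(n-1)$, whose standard representation on the tangent space $T_u S^{n-1} \cong \mathbb{R}^{n-1}$ is irreducible for $n \geq 3$. By Schur's lemma an $O(n-1)$-invariant inner product on $T_u S^{n-1}$ is unique up to a positive scalar; together with transitivity this forces any $O(n)$-invariant metric on $S^{n-1}$ to be a constant multiple of the canonical round metric $g_0$. Hence $g_r = w(r)\,g_0$ for a positive function $w$, which is exactly a warped product structure with normal factor $(S^{n-1}, g_0)$. The case $n=2$ is handled separately and is essentially trivial, as $S^1$ is homogeneous and the assertion about $2$-planes is then vacuous.

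For the converse (warped product $\Rightarrow$ isometric), the essential input is the Gauss lemma: in geodesic polar coordinates around $x$ the metric admits the normal-coordinate expansion $g_r = r^2 g_0 + o(r^2)$ as $r \to 0$, so that $r^{-2} g_r$ converges to the round metric $g_0$ on the unit tangent sphere of $(T_x M, \langle,\rangle_x)$. If $g_r = w(r)\, g$ for a fixed metric $g$, then $r^{-2} g_r = \big(r^{-2} w(r)\big)\, g$; since the left-hand side tends to the positive-definite tensor $g_0$ while the right-hand side is always proportional to the fixed $g$, the scalar $r^{-2}w(r)$ must converge to some $\lambda > 0$ and $g = \lambda^{-1} g_0$. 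Therefore $g$ is proportional to $g_0$, hence $O(n)$-invariant, and so is every $g_r = w(r)\,g$; by the first paragraph $O(n)$ acts isometrically. This same computation yields the asserted consequence that $g$ is, up to a multiplicative factor, the canonical metric on $S^{n-1}$.

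Finally, for the curvature statement I would note that the $O(n)$-action extends continuously to fix the center $x$ (the locus $r=0$) and, being isometric on $M - \{x\}$ and linear in normal coordinates, extends to isometries of $M$ fixing $x$; their differentials at $x$ realize the full standard action of $O(n)$ on $T_x M \cong \mathbb{R}^n$. Since $O(n)$ acts transitively on $2$-planes in $\mathbb{R}^n$ and sectional curvature is an isometry invariant, $K_x(A\cdot P) = K_x(P)$ gives a single common value for all $2$-planes $P \subset T_x M$. I expect the main obstacle to be the rigorous justification of the Gauss-lemma asymptotics $r^{-2} g_r \to g_0$ and the attendant identification of the limiting invariant metric with the \emph{canonical} one rather than merely some invariant metric; the algebraic (Schur) half is routine once irreducibility of the isotropy representation is invoked, and the low-dimensional degeneration ($n=2$) must be treated on its own.
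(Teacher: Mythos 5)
Your proposal is correct and rests on the same mechanism as the paper's own proof: the nontrivial implication (warped product $\Rightarrow$ isometric $O(n)$-action) is derived from the $r\to 0$ asymptotics of the rescaled sphere metrics $r^{-2}g_r \to g_0$, which is exactly the content of the paper's observation that the distortion of each $A \in O(n)$, being constant across the homothetic spheres $S_r$, tends to $1$ as $r \to 0$. The pieces you additionally write out in full --- Schur's lemma for the converse implication, and the extension of the action across the center $x$ to get equality of sectional curvatures --- are precisely the parts the paper dismisses as ``standard,'' so your argument is a fleshed-out version of the paper's route rather than a different one.
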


\begin{proof} Assume we have a warped product.
In order to prove that the $O(n)$-action is isometric, it suffices to 
show that it is isometric on each sphere $S_r= \{r\} \times S^{n-1}$. Let 
$A \in O(n)$.
All these spheres are homothetic, and the metric distortion 
of $A$ is the same on all of them. But this distortion tends 
to 1 when 
$r \to 0$. Therefore, $A$ has distortion 1 on each $S_r$, that 
is 
$A$ acts isometrically. The remaining part  of the fact is standard.

\end{proof}

For example, polar coordinates determine 
a warped product
in the case of constant curvature Riemannian spaces, 
 the Euclidean case corresponds to 
${\Bbb R}^+ \times_{r^2} S^{n-1}$.

The previous fact 
generalizes to pseudo-Riemannian manifolds. More precisely, the polar
 coordinates
at a point $x$ of a pseudo-Riemannian  manifold $M^{p,q}$ of 
type
$(p,q)$,  give rise to a warped product 
structure, iff, the natural action of 
$O(p, q)$ is isometric.
Let us  call $x$ in this case, a point of {\it complete symmetry}.
All the non-degenerate 2-planes 
at such a  point 
  have the same sectional curvature.

In particular, if 
all the points of $M$ are points of  complete 
symmetry,
then, $M$ has a constant curvature.
It is then   natural to ask if there are non-trivial, 
i.e. with non constant curvature,  examples
of pseudo-Riemannian manifolds with at least
one  point of complete symmetry.
 An averaging method works to give examples,  in
 the Riemannian case, since $O(n)$ is compact. 
In the other cases, the ``spheres''
become 
complicated, and 
 a large isotropy group
at some point, 
may create extra symmetry 
elsewhere. However, nontrivial  
examples do exist, for instance, any Lorentz metric 
on ${\Bbb R}^2$ of the form 
$F(xy) dx dy$, where $F$ is 
a positive real  function defined on an interval 
containing 0,  admits $(0,0)$ as a
point of complete symmetry. (The metric is 
defined on an open subset of ${\Bbb R}^2$
delimited by hyperbolas
$xy = $ constant).
 A  celebrated example  of this form is the   Kruskal 
plane (see for instance \cite{O'n}). 

More generally, in any dimension, one may consider Lorentz metrics of the form 
$g= F(q)q$ where $q$ is a Lorentz form. The origin is a point of complete 
symmetry for $g$. Let us however
 that the situation becomes really rigid if one asks 
for many points of complete symmetry.

\subsubsection{Riemannian symmetric spaces.} 
We find the representation of 
the hyperbolic (Riemannian) space ${\Bbb H}^n$ as the warped product 
${\Bbb R}\times_{e^t} {\Bbb R}^{n-1}$, to be the nicest model of it
(here ${\Bbb R}$ and ${\Bbb R}^{n-1}$ are Euclidean).  One amuzing fact coming from 
the theory of geodesics in warped products, is how geodesics  of the hyperbolic plane 
are related to solutions of mechanical systems $x^{\prime \prime} = ce^{-x}$
($c$ is a constant) (see \S \ref{mechanics.surface}). Of course the interest here is not
 to analytically
 solve this equation, but rather 
to see how it can be solved geometricaly.

\begin{remark} [Generalization]
The situation of more general Riemannian symmetric space is more subtle. It involves
``multi-warped products''.
 This means that we have 
 $(L, h)$, and $(N, g)$, endowed with $T_1, \ldots T_k$  supplementary subbundles of 
$TN  $ ($= T_1 \oplus\ldots  \oplus T_k$), with restriction of the metric denoted
 $g_i$. We also have 
warping functions $w_1, \ldots, w_k $ defined on $L$, and construct from all, the metric  
$w_1g_1 \oplus \ldots \oplus w_kg_k.$  All Riemannian symmetric spaces
(e.g. $\SL(n, {\Bbb R})/\SO(n)$) admit such a representation. The geometry 
of such ``multi-warped products'' is quite delicate, at least more than the 
somewhat usual definition in the literature, where the plane fields $T_i$ are assumed
 to be integrable. However, 
it is the non-integrable case that covers the case of symmetric spaces. We think it is 
worthwhile   investigating this generalization.

\end{remark}

\section{Local warped products}   
\label{local.product}

A pseudo-Riemannian manifold which is 
a  warped product is in particular a global topological 
product. This is so restrictive (for instance  for physical applications)
 and 
we are led to localize the notion of warped products as follows.

\begin{definition}
Let $M$ be a pseudo-Riemannian manifold. 
A local warped product structure  
  on $M$
is a pair $({\mathcal L}, {\mathcal N})$ of  transversal foliations,
 such that the metric 
on adapted flow-boxes is a warped product. 
More 
precisely, for any point of $M$ there is
a neighborhood $U$, and 
a warped product pseudo-Riemannian manifold $L \times_wN$, 
 and an isometry 
$\phi: U \to  L \times_wN$, sending 
the foliation  ${\mathcal L}$
(resp. ${\mathcal N}$) to the foliation 
of $L \times N$ determined by 
the factor $L$ (resp. $N$).

A local warped 
product is called a local GRW structure if the factor ${\mathcal N}$
 has a constant curvature (i.e. each leaf of ${\mathcal N}$
is a pseudo-Riemannian manifold of constant curvature).
\end{definition}

\subsection{Geometry of submanifolds.} 
In the sequel, we will investigate conditions on a pair of foliations 
$({\mathcal L}, {\mathcal N})$ in order to  determine a local 
warped structure.
For this, let  $M$ be a pseudo-Riemannian 
  manifold, and 
  $S $ a non-degenerate 
submanifold of $M$, that is the metric restricted to 
$T_xS$ is non-degenerate for any $x \in S$.
Recall that the {\bf second fundamental form}, also 
called the {\bf shape tensor}, at $x$  is a bilinear 
map: $II_x: T_xS \times T_x S \to N_x$, where 
$N_x$ is the normal space of $T_xS$, which measures
how $S$ is far from being geodesic
($II$ is
well defined because of the non-degeneracy  hypothesis).

The submanifold $S$ is {\bf umbilic}  if for any 
$x\in S$, 
 $II_x$ 
 has the form 
$II_x = \langle,\rangle n_x$, where $n_x$ is  some  normal vector to 
$T_xS$.  In this case, the vector field
(along $S$) $x \to  n_x$  is  called the {\bf shape}
 vector field.
(the terminology force field is also pertinent
as  may  be seen from  Theorem   \ref{Maupertuis}).

The (totally) {\bf geodesic submanifolds}  correspond to
the case 
$n_x= 0$,  for all $x \in S$. 

We will also need the 
following notion:  $S$ is said to be 
{\bf spherical}, if it is umbilic, and 
furthermore, the shape vector field $x \in S \to n_x$, 
is parallel (along $S$).

When we consider umbilic submanifolds, we will 
always assume
that they have dimension 
$>1$. Indeed, every 1-dimensional submanifold  
is umbilic (but need not  to be spherical).

$\bullet$ Let us recall the 
geodesic invariance characteristic property of geodesic 
submaniflods. 
Let $x \in S$,
$u \in T_xS$, and let  $\gamma : ]-\epsilon , +\epsilon[
\to M$ be the geodesic in 
$M$
determined by $u$. If  $S$ is  geodesic, then the image
of $\gamma$
is contained
in $S$, for $\epsilon$ 
sufficiently small. 
This fact is  true also when 
$S$ is umbilic, if in addition 
$u$ is {\it  isotropic}. This is   a remarkable
rigidity fact in pseudo-Riemannian  geometry, which
has no counterpart   in 
Riemannian geometry.

\begin{example} 
{\em 
Take $M$ to be 
the pseudo-Euclidean  space of type $(p,q)$, i.e. ${\Bbb R}^{p+q}$
endowed with a pseudo-Euclidean form $Q$ of 
type $(p,q)$. 

A (connected) geodesic hypersurface
is an open set of  an  affine hyperplane. 
The (connected)  umbilic 
hypersurfaces are contained in hyperquardrics 
$Q(x-O) = c$, where $O \in {\Bbb R}^{p,q}$ and $c$ is
a constant (the proof is 
formally the same as in the 
Euclidean case).
 One can verify  that such a
hyperquadric is  ruled, that is, it  contains the isotropic 
lines which are somewhere
tangent to it.

In general, an umbilic submanifold is the intersection of 
a hyperquadric   with an affine plane
of some dimension.

In particular, one sees in the case 
of pseudo-Euclidean spaces, that umbilic 
submanifolds are spherical. This 
is true for all pseudo-Riemannian 
manifolds of constant curvature, but 
  not true in the general case.
}
\end{example}

\subsection{Tangential geometry of foliations.}
\label{tangential}

(See for instance \cite{B-H, Rov, Ton} for more details).  
 A foliation ${\mathcal F}$  is called geodesic,  umbilic or  spherical,
 if   its leaves
are geodesic,   umbilic or spherical, respectively.

Let $X$ be a vector field defined on an open subset $U \subset 
M$. 
We say that $X$ is
a  (${\mathcal F}$-) {\bf normal foliated} vector field, if   
  $X$ is orthogonal to ${\mathcal F}$, 
and
its  local flow $\phi^t$ preserves ${\mathcal F}$, i.e. it 
sends a leaf of ${\mathcal F}$ to a leaf of 
${\mathcal F}$ (everything  is restricted to $U$).

As in the case of an umbilic submanifold, 
 an umbilic foliation  ${\mathcal F}$
has a  {\bf shape } vector field 
$\overrightarrow{n}$    
defined by the relation
 $II = \langle,\rangle  \overrightarrow{n}$, 
where $II$ is the shape tensor.

\begin{lemma} 
\label{Lie.derivative}

Let ${\mathcal F}$ be a non-degenerate foliation 
of a pseudo-Riemannian 
manifold $(M, \langle,\rangle ) $. Let $f$ denote the first 
fundamental form of ${\mathcal F}$, that is 
the tensor which vanishes on $T{\mathcal F}^\perp$ and 
equals 
$\langle,\rangle $ on $T{\mathcal F}$, and denote by $II: T{\mathcal F} 
\times T{\mathcal F} \to T{\mathcal F}^\perp$ the second fundamental form.

Let $X$ be a normal foliated vector field, then 
the Lie derivative $L_Xf$ satisfies:
 $$(L_Xf) (u,v) = -2\langle II(u,v), X\rangle , $$
for all $u, v \in T{\mathcal F}$. (In other words, if
$\phi^t$ is the (local) flow of $X$, then, at any $x$, 
 $ \frac {\partial} {\partial t}(\phi^t_*f )_x \vert_{t=0}= 
-2\langle II_x(.,.), X\rangle$).

\end{lemma}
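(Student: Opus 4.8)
The plan is to unwind the definition of the Lie derivative of the $(0,2)$-tensor $f$, use the foliated hypothesis to replace $f$ by the ambient metric $\langle,\rangle$ on tangent vectors, and then convert the resulting expression into the shape tensor via the defining property of $II$. First I would extend the given $u, v \in T_x{\mathcal F}$ to local vector fields tangent to ${\mathcal F}$ and recall the Leibniz expression for the Lie derivative of a symmetric $(0,2)$-tensor,
$$(L_Xf)(u,v) = X\bigl(f(u,v)\bigr) - f([X,u],v) - f(u,[X,v]).$$

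The essential use of the hypothesis comes next. Since $X$ is an ${\mathcal F}$-normal foliated vector field, its local flow preserves ${\mathcal F}$, which infinitesimally means $[X,u]$ and $[X,v]$ are again tangent to ${\mathcal F}$. Consequently each term above evaluates $f$ only on pairs of leaf-tangent vectors, where $f$ coincides with $\langle,\rangle$; so on $T{\mathcal F}$ the tensor $L_Xf$ agrees with $L_X\langle,\rangle$. Rewriting with the Levi-Civita connection $\nabla$ (metric and torsion-free) then yields the standard identity
$$(L_Xf)(u,v) = \langle \nabla_u X, v\rangle + \langle u, \nabla_v X\rangle.$$

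To finish, I would differentiate along $u$ the orthogonality relation $\langle X, v\rangle = 0$, which holds because $X$ is normal and $v$ is tangent; this gives $\langle \nabla_u X, v\rangle = -\langle X, \nabla_u v\rangle$. As $X$ lies in $T{\mathcal F}^\perp$, only the $T{\mathcal F}^\perp$-component of $\nabla_u v$ survives the pairing, and that component is by definition $II(u,v)$, whence $\langle \nabla_u X, v\rangle = -\langle II(u,v), X\rangle$. The symmetric argument handles the second term, and summing produces the factor $-2$ and the claimed formula.

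The main obstacle — indeed essentially the only genuine content — is the middle step: correctly translating the ``normal foliated'' condition into the bracket condition $[X, T{\mathcal F}] \subset T{\mathcal F}$, and thereby justifying that the Lie derivative of the first fundamental form $f$ reduces to that of the full metric. Once that reduction is in place, the differentiation of the orthogonality relation and the identification with $II$ are routine, relying only on the non-degeneracy assumption that makes $II$ well defined and on the symmetry of the shape tensor.
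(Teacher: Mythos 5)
Your proof is correct and takes essentially the same route as the paper's: reduce the Lie derivative of $f$ on leaf-tangent vectors to covariant derivatives of the metric, then differentiate the orthogonality relation $\langle X, v\rangle = 0$ to convert $\langle \nabla_u X, v\rangle$ into $-\langle II(u,v), X\rangle$, picking up the factor $-2$ by symmetry. The only cosmetic difference is where the foliated hypothesis enters: the paper chooses extensions of $u,v$ that commute with $X$ (so the bracket terms vanish outright), while you keep general tangent extensions and use $[X, T{\mathcal F}] \subset T{\mathcal F}$ to see that the bracket terms are evaluations of $f$ on leaf-tangent pairs --- the same ingredient, used at a slightly different point of the computation.
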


\begin{proof} Let $u$ and $v$ be two vector fields 
tangent to ${\mathcal F}$ which commute with $X$. Then 
by definition 
$(L_Xf)(u, v) = X.f(u,v)$, which also equals
$X. \langle u, v\rangle $. Now,
$ X.\langle u,v\rangle = 
\langle \nabla_Xu, v\rangle + 
 \langle u, \nabla_Xv\rangle $. By commutation, this becomes 
$\langle \nabla_uX, v\rangle +  \langle u, \nabla_vX\rangle $. 
Since 
$\langle X, v\rangle = \langle u, X\rangle  = 0$,  
 $(L_Xf)(u, v)=-\langle X, \nabla_u v\rangle -\langle X, \nabla_v u\rangle $,
 and
so by definition of $II$, we have:
$(L_Xf)(u, v) = -2\langle II(u,v), X\rangle $

\end{proof}

\begin{corollary} 
\label{Lie.flow}
If ${\mathcal F}$ is geodesic (resp. umbilic) then 
the flow of $X$ maps isometrically (resp. conformally) 
a leaf of ${\mathcal F}$ 
onto  a leaf of ${\mathcal F}$.

Conversely, if the flow of any normal foliated 
 vector field    maps
 isometrically (resp.
 conformally) leaves of ${\mathcal F}$ to leaves of ${\mathcal F}$, 
then ${\mathcal F}$is geodesic (resp. umbilic).

\end{corollary}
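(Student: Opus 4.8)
The plan is to reduce both statements to the Lie--derivative formula of Lemma \ref{Lie.derivative}. The starting observation is that, because $X$ is normal foliated, its local flow $\phi^t$ carries leaves of ${\mathcal F}$ to leaves of ${\mathcal F}$, so it is meaningful to ask whether the restriction of $\phi^t$ to a leaf is an isometry (resp. a conformal map) onto the image leaf. Since the induced metric on a leaf is exactly the first fundamental form $f$, this amounts to $(\phi^t)^* f = f$ (resp. $(\phi^t)^* f = \rho_t\, f$ for some positive function $\rho_t$) as tensors on $T{\mathcal F}$. First I would record the infinitesimal reformulation: differentiating in $t$ and using $\frac{d}{dt}(\phi^t)^* f = (\phi^t)^*(L_X f)$, the leafwise isometric (resp. conformal) property is equivalent to $L_X f = 0$ (resp. $L_X f = \mu f$ for a function $\mu$) on $T{\mathcal F}$.

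For the direct implication I would feed the Lemma into this reformulation. If ${\mathcal F}$ is geodesic then $II$ vanishes identically, so $L_X f = -2\langle II(\cdot,\cdot), X\rangle = 0$ everywhere, and integrating the ODE gives $(\phi^t)^* f = f$: the flow is leafwise isometric. If ${\mathcal F}$ is umbilic with shape field $\overrightarrow{n}$, then $II = \langle,\rangle\,\overrightarrow{n}$, whence $L_X f = -2\langle \overrightarrow{n}, X\rangle\, f = \mu f$ with $\mu = -2\langle\overrightarrow{n}, X\rangle$; integrating $\frac{d}{dt}(\phi^t)^* f = (\mu\circ\phi^t)\,(\phi^t)^* f$ produces the conformal factor $\rho_t = \exp\!\big(\int_0^t \mu\circ\phi^s\, ds\big)$.

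For the converse I would argue pointwise. The hypothesis gives $L_X f = 0$ (resp. $L_X f$ proportional to $f$) for every normal foliated $X$, so by the Lemma $\langle II(u,v), X\rangle = 0$ (resp. $\langle II(u,v),X\rangle = c(X)\, f(u,v)$) for all $u,v \in T{\mathcal F}$. The point is then to let $X$ range over all normal directions at a fixed point $x$. Here I would pass to a flow-box, where ${\mathcal F}$ is the fibration of a submersion $\pi$ onto a local leaf space $Q$, and use that the orthogonal complement $T{\mathcal F}^\perp$, non-degenerate since ${\mathcal F}$ is, maps isomorphically onto the tangent spaces of $Q$ under $d\pi$. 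The orthogonal lift $X$ of any vector field $\overline{Y}$ on $Q$ is then orthogonal to ${\mathcal F}$; moreover it is $\pi$-related to $\overline{Y}$ while leaf-tangent fields are $\pi$-related to $0$, so its bracket with any leaf-tangent field is $\pi$-related to $0$, i.e. stays tangent to ${\mathcal F}$ --- hence $X$ is normal foliated. Consequently every normal vector at $x$ occurs as the value of some normal foliated $X$. Feeding this back: in the geodesic case $\langle II(u,v), X\rangle = 0$ for all normal $X$ forces $II(u,v)=0$ by non-degeneracy of the metric on $T{\mathcal F}^\perp$; in the umbilic case $X \mapsto c(X)$ is a linear functional on $T_x{\mathcal F}^\perp$, represented by a unique vector $\overrightarrow{n}_x$, so $\langle II(u,v), X\rangle = \langle f(u,v)\,\overrightarrow{n}_x, X\rangle$ for all $X$ yields $II(u,v) = \langle u,v\rangle\, \overrightarrow{n}_x$, and ${\mathcal F}$ is umbilic.

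The main obstacle is precisely this supply of normal foliated vector fields: the converse collapses unless an arbitrary normal vector at a point extends to a field that is at once orthogonal to ${\mathcal F}$ and foliated. The delicate point I would verify carefully is that the orthogonal lift from the local leaf space really is foliated --- the bracket computation above --- and that non-degeneracy of ${\mathcal F}$ is exactly what makes this lift well defined. Everything else is the formal dictionary between Lie derivatives and flows, combined with Lemma \ref{Lie.derivative}.
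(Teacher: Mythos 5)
Your proof is correct and follows essentially the same route as the paper: translate the leafwise isometric/conformal property of the flow into the tensor equation $(\phi^t)^*f = f$ (resp. $(\phi^t)^*f = \rho_t f$) and apply Lemma \ref{Lie.derivative}. You simply make explicit two steps the paper leaves tacit --- the integration of the Lie-derivative ODE and, for the converse, the existence of a normal foliated vector field through any prescribed normal vector (via orthogonal lifts from the local leaf space) --- both of which are handled correctly.
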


\begin{proof} 
The proof is  just the translation,
 with the above notation, 
of the fact that 
the flow $\phi^t$ maps 
 isometrically (resp.
 conformally) leaves of ${\mathcal F}$ to leaves of ${\mathcal F}$, 
into the equation:  $\phi^t_*f = f$ (resp. 
$\phi^t_*f= af$ for some scalar function $a$).

\end{proof}

Note however,  that there is no a such  characterization 
for spherical foliations. For example,  any
 (local)  umbilic 
foliation of the Euclidean space is   
 spherical, as it is just a foliation by round spheres. 
The flow of  a normal foliated 
vector field  maps conformally a sphere to
a sphere, but not more,
  for example not necessarily 
homothetically.

\section{Characterization of local warped products}

The following theorem is due  to S. Hiepko, but with a different  proof, and
especially with a purely ``analytic'' formulation. 
We said in a previous version of this article, that this analytic formulation could
explain why the article of Hiepko  
  \cite{Hie} seems to be not 
sufficiently known in the literature. Afterwards, we discover the work \cite{P-R}
by R. Ponge and H. Reckziegel, which contains a geometric approach.

\begin{theorem}
\label{characterization}
Let $(M, \langle,\rangle)$ be a pseudo-Riemannian manifold endowed
with a pair $({\mathcal L}, {\mathcal N})$ of non-degenerate foliations. 
This determines a local warped product structure with 
${\mathcal N}$ as a normal factor, iff, the foliations are orthogonal, 
${\mathcal L}$ is geodesic, and ${\mathcal N}$
is {\em spherical}.
\end{theorem}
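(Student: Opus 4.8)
The plan is to prove both directions, with the "if" direction being the substantive one.

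The plan is to prove the two implications separately, the forward (``only if'') direction being a direct computation and the converse (``if'') direction requiring a local construction of the adapted chart. For the ``only if'' direction, in a warped product $L\times_w N$ I would read off the two second fundamental forms from the standard connection formulas. Writing $X,Y$ for vectors tangent to the $L$-factor and $V,W$ for vectors tangent to $N$, one has $\nabla_X Y=\nabla^L_X Y$, so the $L$-factor foliation $\mathcal{L}$ is totally geodesic; and the normal part of $\nabla_V W$ equals $-\tfrac{1}{2w}\langle V,W\rangle\,\nabla^L w$, so $\mathcal{N}$ is umbilic with shape vector $\vec n=-\tfrac12\nabla^L\log w$. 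Since $\vec n$ depends only on the $L$-coordinate and $\nabla_V\partial_k$ is tangent to $\mathcal{N}$, the normal component of $\nabla_V\vec n$ vanishes, so $\vec n$ is parallel in the normal bundle and $\mathcal{N}$ is spherical; orthogonality is immediate from the block form $h\oplus wg$.

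For the converse, since $\mathcal{L}$ and $\mathcal{N}$ are transverse, complementary and integrable, I would first choose a bi-foliated chart $(x^1,\dots,x^p,y^1,\dots,y^q)$ in which $T\mathcal{L}=\langle\partial_{x^i}\rangle$ and $T\mathcal{N}=\langle\partial_{y^\alpha}\rangle$; orthogonality then makes the metric block-diagonal, $g=g_{ij}\,dx^idx^j+g_{\alpha\beta}\,dy^\alpha dy^\beta$. Next I would use that $\mathcal{L}$ is geodesic: the $\mathcal{N}$-component of $\nabla_{\partial_i}\partial_j$ is $-\tfrac12 g^{\mu\nu}(\partial_\nu g_{ij})\partial_\mu$, whose vanishing forces $\partial_\nu g_{ij}=0$, i.e. $g_{ij}=g_{ij}(x)$. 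Conceptually this is Corollary \ref{Lie.flow}: the flow of an $\mathcal{L}$-normal foliated field carries $\mathcal{L}$-leaves isometrically, so the $L$-metric $h$ is the same on every leaf.

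Then I would exploit umbilicity of $\mathcal{N}$. Computing $II_{\mathcal{N}}(\partial_\alpha,\partial_\beta)=-\tfrac12 g^{kl}(\partial_l g_{\alpha\beta})\partial_k$ and imposing the umbilic relation $II_{\mathcal{N}}=\langle,\rangle\,\vec n$ yields $\partial_l g_{\alpha\beta}=f_l\,g_{\alpha\beta}$ with $f_l:=-2\langle\vec n,\partial_l\rangle$ independent of the indices $\alpha,\beta$. Hence all ratios $g_{\alpha\beta}/g_{\alpha'\beta'}$ are $x$-independent, and one may factor $g_{\alpha\beta}(x,y)=\rho(x,y)\,\gamma_{\alpha\beta}(y)$. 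At this stage the metric is only ``twisted'': $\rho$ may still depend on $y$, and promoting it to a genuine warping function is exactly where sphericity enters. Using $g_{ij}=g_{ij}(x)$ from the previous step, the normal connection along a leaf reduces to $\nabla^\perp_{\partial_\alpha}\vec n=(\partial_\alpha n^k)\partial_k$, so sphericity ($\nabla^\perp\vec n=0$) gives $\partial_\alpha n^k=0$; thus $\vec n$, and with it $f_l=-2g_{kl}n^k$, depend on $x$ alone. Integrating $\partial_l\log\rho=f_l(x)$ shows $\rho$ splits as $w(x)$ times a function of $y$, which I absorb into $\gamma$, giving $g=h(x)\oplus w(x)\gamma(y)$; the chart then furnishes the required isometry onto $L\times_w N$ matching the two factor foliations.

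The step I expect to be the main obstacle is this last one: umbilicity alone only produces a conformal distortion that can vary within a single leaf (a twisted, not warped, product), and it is precisely the parallelism of the shape vector---the sphericity hypothesis, via Lemma \ref{Lie.derivative} and the reduction of $\nabla^\perp$---that pins this distortion to a function of the base and yields a true warped product. A secondary technical point is the existence of the simultaneous bi-foliated chart, which I would treat as the standard local rectification of two transverse complementary integrable distributions.
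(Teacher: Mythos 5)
Your proof is correct, and it follows the same logical skeleton as the paper's: pass to a local product chart, use orthogonality to get a block-diagonal metric, use the geodesic hypothesis to make the $\mathcal{L}$-leaf metric independent of the transverse coordinate, use umbilicity to get only a \emph{twisted} product $h\oplus\rho(x,y)\,\gamma(y)$, and use sphericity as precisely the ingredient that forces $\rho$ to depend on $x$ alone --- you identify this pivot exactly as the paper does. Where you differ is in technique: you compute Christoffel symbols in a bi-foliated chart, so your relation $\partial_l g_{\alpha\beta}=f_l\,g_{\alpha\beta}$ with $f_l=-2\langle\overrightarrow{n},\partial_l\rangle$ is the coordinate form of the paper's Lemma \ref{Lie.derivative}, and your cancellation $\Gamma^m_{\alpha k}=0$ (a consequence of $g_{ij}=g_{ij}(x)$) is the coordinate form of the paper's observation that $\langle\overrightarrow{n},\nabla_XY\rangle=0$ because $\mathcal{L}$ is geodesic. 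The paper instead argues synthetically, via flows of commuting normal foliated vector fields (Corollary \ref{Lie.flow}) and the invariant identity $Y.(X.w)=\langle\nabla_Y\overrightarrow{n},X\rangle$, which delivers all the steps as a chain of equivalences at once --- so it never needs your separate ``only if'' computation from the warped-product connection formulas. This is exactly the ``synthetic versus analytic'' distinction the paper draws with Hiepko's original proof; your route is essentially Hiepko's analytic one, which is self-contained and checkable by direct computation, while the paper's is coordinate-free and reuses its lemmas elsewhere (e.g. Fact \ref{shape.vector} and the geodesics section). Two small patches for your write-up: the factorization $g_{\alpha\beta}=\rho\,\gamma_{\alpha\beta}$ via ``ratios'' should be phrased as solving $\partial_l\rho=f_l\rho$ with $\rho(x_0,\cdot)=1$ (in the pseudo-Riemannian setting individual components $g_{\alpha'\beta'}$ may vanish, so ratios are not globally defined), and one should note $\rho>0$ so that $w$ is a legitimate warping function.
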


\begin{proof}
Let ${\mathcal L}$ and ${\mathcal N}$ be two 
orthogonal foliations. Locally, at a topological level, 
we may suppose that $M= L \times N$, and that the foliations
${\mathcal L}$ and ${\mathcal N}$ correspond to those  
determined by the factors $L$ and $N$. 
Let $(x, y)$ be a fixed  point in  $L \times N$. The metric 
on $M$ at $(x, y)$ 
has the form $h_{(x,y)} \bigoplus f_{(x, y)}$, where 
 $h_{(x,y)}$ (resp.  $f_{(x, y)}$) is a metric 
on $L \times \{y\}$ (resp. on $\{x\} \times
N$). Note that a normal foliated 
vector field for  ${\mathcal L}$ is just a 
vector field of the form 
$X(x, y)= (0, \bar {X}(y))$, where $\bar{X}$
is a vector field on $N$, and similarly 
for ${\mathcal N}$.

By   Corollary  \ref{Lie.flow},   ${\mathcal L}$
is geodesic,  iff 
$h_{(x,y)} = h_y$.  In the same way, ${\mathcal N}$
is umbilic, iff there is a 
function 
$w(x, y)$ such that 
$f_{(x, y)} = w(x, y) f_x$. Therefore, the fact that ${\mathcal L}$
is geodesic and ${\mathcal N} $ is
umbilic, is equivalent to that 
the metric $\langle, \rangle$  of $M$ is 
a {\bf twisted product}
$h \bigoplus w g$, where $h$ and $g$ are metrics on 
$L$ and $N$ respectively, and 
$w$ is a function on $L \times N$.

By choosing a point $(x_0, y_0)$, we may suppose 
that $g = f_{(x_0, y_0)}$, and hence $w(x_0, y)
= 1$, for all $y \in N$.

The fact that this  metric is a warped product  
means exactly that $w$ is a function of $x$ alone. 
Therefore, 
the statement of the theorem reduces  now
to the  equivalence between the two facts, 
 $w$  being  constant along
${\mathcal N}$,  and  ${\mathcal N}$
 being  spherical. 

To check  this,  let $\bar{X}$ and 
$\bar{Y}$ be two vector fields on 
$L$ and $N$,  respectively, and let 
$X$ and $Y$ be the corresponding vector 
fields on $M$, which are 
normal foliated 
relatively 
to ${\mathcal N}$ and ${\mathcal L}$, respectively.

Since ${\mathcal N}$ is umbilic, 
   $II=  f \overrightarrow{n}$, where 
$f$ and $II$ are the first and second fundamental forms for 
${\mathcal N}$ respectively, and $\overrightarrow{n}$ 
is its shape vector field.

We have, 
$Y\langle \overrightarrow{n}, X\rangle  = 
\langle \nabla_Y\overrightarrow{n}, X\rangle  + \langle \overrightarrow{n}, \nabla_YX\rangle 
= \langle \nabla_Y\overrightarrow{n}, X\rangle  + \langle \overrightarrow{n}, 
\nabla_XY\rangle $, since $X$ and $Y$ commute. 

Since ${\mathcal L}$ is geodesic, 
$\nabla_XY$ is orthogonal to ${\mathcal L}$, 
in particular, 
$\langle \overrightarrow{n}, 
\nabla_XY\rangle = 0$. It then follows that 
$Y\langle \overrightarrow{n}, X\rangle  = 
\langle \nabla_Y\overrightarrow{n}, X\rangle $.

Lemma \ref{Lie.derivative} says that $X.w = -2 \langle \overrightarrow{n},X\rangle $, 
and hence
$ Y.(X.w) = \langle \nabla_Y\overrightarrow{n},X\rangle $. By definition,  
${\mathcal N}$ is spherical iff $\langle \nabla_Y\overrightarrow{n},X\rangle 
=0$, for all $X$ and $Y$, 
which is thus equivalent to 
  $Y.(X.w)= 0$. This last equality, applied to 
a fixed $Y$, and an arbitrary $X$, 
 means that
$Y.w $ is a function of $y$ only, say 
$Y.w = a(y)$. 
But, since $w(x_0, y) = 1$, it follows that 
$Y.w = 0$. Applying this to an arbitrary
$Y$, leads to the fact  that $w$
does not depend on  $y$, which in turn  means 
that the metric is a 
warped product. 
\end{proof}

\section{Transverse geometry of foliations}

\label{transverse}

Theorem \ref{characterization} is expressed by means of  
tangential
properties of foliations, i.e.  by those  of individual leaves. 
 Sometimes, it is  also interesting to consider 
the transverse structure of these foliations, i.e.
the properties of their  holonomy maps
(see for instance \cite{Mol} as a reference about   such notions).
These holonomy maps are 
  especially easy to realize, 
for a foliation $\mathcal F$, 
when 
 the orthogonal $T{\mathcal F}^\perp$
is
integrable, that is, when it  determines a foliation 
 say  ${\mathcal F}^\perp$.
The holonomy maps  of ${\mathcal F}$ are 
thus just the local diffeomorphisms between 
leaves of ${\mathcal F}^\perp$, obtained by integrating 
${\mathcal F}^\perp$- normal foliated
  vector fields (see \S \ref{tangential} for their definition).

The foliation  ${\mathcal F}$ is said to be 
{\it transversally pseudo-Riemannian}  
if its holonomy preserves the pseudo-Riemannian 
metric on $T{\mathcal F}^\perp$. Similarly 
one defines the fact that ${\mathcal F}$
is transversally conformal
(resp. 
transversally homothetic).
Using this language, 
 the previous developments  imply 
 straightforwardly the following fact.

\begin{fact}
\label{transverse.critere}
A pair $({\mathcal L}, {\mathcal N})$ determines
a local warped product structure, iff 
${\mathcal L}$ is transversally homothetic and
${\mathcal N}$
is transversally 
pseudo-Riemannian.

\end{fact}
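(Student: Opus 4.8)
The plan is to translate each transverse hypothesis into the tangential language of Theorem~\ref{characterization} by means of Corollary~\ref{Lie.flow}, and then to invoke that theorem directly. Throughout I read the statement with $\mathcal{N}=\mathcal{L}^{\perp}$ and $\mathcal{L}=\mathcal{N}^{\perp}$, i.e. the two foliations are orthogonal, complementary and non-degenerate; this is exactly what makes the transverse notions meaningful, since it renders $T\mathcal{L}^{\perp}$ and $T\mathcal{N}^{\perp}$ integrable and equal to the other foliation. With this convention the holonomy maps of $\mathcal{L}$ are the flows of $\mathcal{N}$-normal foliated vector fields (tangent to $\mathcal{L}$, preserving $\mathcal{N}$) acting between leaves of $\mathcal{N}$, and the transverse metric of $\mathcal{L}$ is the induced metric on those leaves; symmetrically, the holonomy of $\mathcal{N}$ is generated by the $\mathcal{L}$-normal foliated fields acting between leaves of $\mathcal{L}$.

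First I would dispose of the normal factor. By definition $\mathcal{N}$ is transversally pseudo-Riemannian if and only if the flows of the $\mathcal{L}$-normal foliated fields preserve the induced metric on the leaves of $\mathcal{L}$; applying Corollary~\ref{Lie.flow} to the foliation $\mathcal{L}$, this holds if and only if $\mathcal{L}$ is geodesic. Likewise, applying Corollary~\ref{Lie.flow} to $\mathcal{N}$, the foliation $\mathcal{L}$ is transversally conformal if and only if $\mathcal{N}$ is umbilic. Since ``homothetic'' is stronger than ``conformal'', $\mathcal{L}$ transversally homothetic forces $\mathcal{N}$ umbilic, and conversely sphericity includes umbilicity; hence in both directions I may assume $\mathcal{N}$ umbilic, with shape vector field $\overrightarrow{n}\in T\mathcal{L}$ and $II=f\,\overrightarrow{n}$.

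The heart of the argument is the upgrade from ``conformal'' to ``homothetic''. For a $\mathcal{N}$-normal foliated field $X$, Lemma~\ref{Lie.derivative} together with umbilicity gives $L_Xf=-2\langle\overrightarrow{n},X\rangle f$, so the holonomy flow of $X$ scales each leaf metric of $\mathcal{N}$ with logarithmic rate $-2\langle\overrightarrow{n},X\rangle$; thus $\mathcal{L}$ is transversally homothetic precisely when this rate is constant along the leaves of $\mathcal{N}$, that is $Y\langle\overrightarrow{n},X\rangle=0$ for all $Y$ tangent to $\mathcal{N}$ and all such $X$. I would then reproduce the computation from the proof of Theorem~\ref{characterization}: for commuting $X,Y$ one has $Y\langle\overrightarrow{n},X\rangle=\langle\nabla_Y\overrightarrow{n},X\rangle+\langle\overrightarrow{n},\nabla_XY\rangle$, and here the first hypothesis re-enters, since $\mathcal{N}$ transversally pseudo-Riemannian has already given $\mathcal{L}$ geodesic, whence $\nabla_XY\perp\mathcal{L}$ and the last term vanishes. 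This leaves $Y\langle\overrightarrow{n},X\rangle=\langle\nabla_Y\overrightarrow{n},X\rangle$, and since $\mathcal{N}$ is spherical exactly when $\langle\nabla_Y\overrightarrow{n},X\rangle=0$ for all $X,Y$ (the shape field being parallel in $T\mathcal{L}=T\mathcal{N}^{\perp}$), I obtain that $\mathcal{L}$ is transversally homothetic $\Longleftrightarrow$ $\mathcal{N}$ is spherical. Combining this with the equivalence of the preceding paragraph and feeding $\mathcal{L}$ geodesic and $\mathcal{N}$ spherical into Theorem~\ref{characterization} proves both implications of the Fact.

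The step I expect to be delicate is exactly this last equivalence, because it is false if the two transverse conditions are treated separately: without the geodesic hypothesis the term $\langle\overrightarrow{n},\nabla_XY\rangle$ need not vanish, and sphericity of $\mathcal{N}$ no longer forces the holonomy of $\mathcal{L}$ to be homothetic. The cautionary example after Corollary~\ref{Lie.flow}---a foliation of Euclidean space by round but non-concentric spheres, which is spherical yet whose normal flow is only conformal---is precisely the case where $\mathcal{L}$ fails to be geodesic. The discipline required, therefore, is to extract $\mathcal{L}$ geodesic from the pseudo-Riemannian hypothesis first, and only afterwards to promote conformality to homothety; the routine verifications (that the infinitesimal ``constant along leaves'' condition integrates to genuine homothety of the finite holonomy, and the bookkeeping of the orthogonal projections) I would leave to direct checking.
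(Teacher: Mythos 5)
Your proposal is correct and follows essentially the route the paper intends: the paper offers no separate proof of this Fact, asserting that it follows ``straightforwardly'' from the preceding developments, and your argument is exactly that elaboration---translating the transverse hypotheses into tangential ones via Corollary~\ref{Lie.flow} and the computation inside the proof of Theorem~\ref{characterization} (the identity $Y\langle\overrightarrow{n},X\rangle=\langle\nabla_Y\overrightarrow{n},X\rangle$ under the geodesic hypothesis), and then invoking Theorem~\ref{characterization}. Your remark that the homothetic/spherical equivalence is genuinely conditional on $\mathcal{L}$ being geodesic, illustrated by the non-concentric spheres example, correctly captures the paper's own caveat that spherical foliations admit no unconditional holonomy characterization.
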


In general (i.e. in a not 
necessarily warped
product situation), 
we have the following duality 
between tangential and transverse structures of foliations.

\begin{fact} 
\label{geodesic.critere}
Let $\mathcal F$ be a foliation  admitting
an orthogonal  foliation ${\mathcal F}^\perp$.
Then ${\mathcal F}$ is geodesic (resp. umbilic)
iff 
${\mathcal F}^\perp$ is transversally pseudo-Riemannian
 (resp. conformal),  
that is more precisely, 
  the holonomy maps of the foliation ${\mathcal F}^\perp$, 
seen as local diffeomorphisms between leaves of ${\mathcal F}$, 
preserve the metric (resp. the conformal structure)
induced on these leaves (of ${\mathcal F}$).
\end{fact}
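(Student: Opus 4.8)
The plan is to prove Fact \ref{geodesic.critere} by reducing it to Corollary \ref{Lie.flow}, which already characterizes geodesic (resp. umbilic) foliations in terms of the behaviour of flows of normal foliated vector fields. The key observation is that the holonomy maps of $\mathcal{F}^\perp$, viewed as local diffeomorphisms between leaves of $\mathcal{F}$, are precisely the maps obtained by integrating $\mathcal{F}^\perp$-normal foliated vector fields. But an $\mathcal{F}^\perp$-normal foliated vector field $X$ is, by definition, orthogonal to $\mathcal{F}^\perp$ (hence tangent to... no---orthogonal to $T\mathcal{F}^\perp$, i.e. tangent to $T\mathcal{F}$ when the two foliations are mutually orthogonal complements), and its flow preserves $\mathcal{F}^\perp$. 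The point is that such an $X$ is at the same time an $\mathcal{F}$-normal foliated vector field in the sense of \S\ref{tangential}: it is orthogonal to $\mathcal{F}$ and its flow permutes the leaves of $\mathcal{F}$. This dictionary is what makes the two pictures match.

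First I would unwind the definitions carefully to confirm this identification. Since $T\mathcal{F}^\perp$ is assumed integrable with integral foliation $\mathcal{F}^\perp$, and since $T\mathcal{F}$ and $T\mathcal{F}^\perp$ are orthogonal complements in $TM$, a vector field orthogonal to $T\mathcal{F}^\perp$ is tangent to $T\mathcal{F}$, and conversely. Thus an $\mathcal{F}^\perp$-normal foliated vector field $X$ is tangent to $\mathcal{F}$ and its flow $\phi^t$ carries leaves of $\mathcal{F}^\perp$ to leaves of $\mathcal{F}^\perp$; by orthogonality and the fact that $\phi^t$ is generated by a field tangent to $\mathcal{F}$, one checks that $\phi^t$ also sends leaves of $\mathcal{F}$ to leaves of $\mathcal{F}$. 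Hence $X$ is simultaneously an $\mathcal{F}$-normal foliated vector field. The holonomy maps of $\mathcal{F}^\perp$ between leaves of $\mathcal{F}$ are exactly the restrictions of these flows $\phi^t$ to leaves of $\mathcal{F}$.

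Next I would apply Corollary \ref{Lie.flow} verbatim. That corollary says $\mathcal{F}$ is geodesic (resp. umbilic) if and only if the flow of every $\mathcal{F}$-normal foliated vector field maps leaves of $\mathcal{F}$ isometrically (resp. conformally) onto leaves of $\mathcal{F}$. Under the identification just established, ``the flow of every $\mathcal{F}$-normal foliated vector field acts isometrically (resp. conformally) on leaves of $\mathcal{F}$'' is precisely the statement that every holonomy map of $\mathcal{F}^\perp$ preserves the metric (resp. the conformal structure) induced on the leaves of $\mathcal{F}$ --- that is, that $\mathcal{F}^\perp$ is transversally pseudo-Riemannian (resp. transversally conformal). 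This gives both directions of the equivalence at once.

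The main obstacle I anticipate is purely the bookkeeping in the identification step: one must verify that an $\mathcal{F}^\perp$-normal foliated vector field really does have a flow preserving $\mathcal{F}$ (not merely $\mathcal{F}^\perp$), and that \emph{every} holonomy map of $\mathcal{F}^\perp$ arises from such a field, so that the quantifier ``for all normal foliated vector fields'' in Corollary \ref{Lie.flow} matches the quantifier ``for all holonomy maps'' in the target statement. Once the dictionary between the two notions of normal foliated vector field is pinned down, the equivalence is a direct transcription and no new computation is needed; the substance of the result already lives in Lemma \ref{Lie.derivative} and Corollary \ref{Lie.flow}.
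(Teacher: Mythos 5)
Your overall strategy --- realizing the holonomy maps of ${\mathcal F}^\perp$ as flows of normal foliated vector fields and then quoting Corollary \ref{Lie.flow} --- is exactly the intended route; the paper states this Fact without a separate proof precisely because it is this translation. But the identification step, which you yourself single out as the crux, is wrong as written, and in fact self-contradictory. You first claim that the holonomy maps of ${\mathcal F}^\perp$ between leaves of ${\mathcal F}$ are obtained by integrating ${\mathcal F}^\perp$-normal foliated vector fields; unwinding the definition (as you do), such a field $X$ is tangent to $T{\mathcal F}$ and its flow preserves ${\mathcal F}^\perp$. You then assert that such an $X$ ``is at the same time an ${\mathcal F}$-normal foliated vector field: it is orthogonal to ${\mathcal F}$.'' This is impossible: the foliations are non-degenerate and mutually orthogonal, so $T{\mathcal F}\cap T{\mathcal F}^\perp=0$, and a nonzero field cannot be both tangent and orthogonal to ${\mathcal F}$. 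Worse, the flow of a field tangent to ${\mathcal F}$ keeps every point inside its own ${\mathcal F}$-leaf, so it only induces self-maps of leaves of ${\mathcal F}$ while sliding leaves of ${\mathcal F}^\perp$ to leaves of ${\mathcal F}^\perp$; such flows realize the holonomy of ${\mathcal F}$, not of ${\mathcal F}^\perp$. As written, the fields you feed into Corollary \ref{Lie.flow} fail its hypothesis (orthogonality to ${\mathcal F}$), so the final step does not apply.

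The repair is simple and eliminates the need for any ``dictionary'': holonomy of a foliation slides transversals along that foliation's own leaves, so the holonomy maps of ${\mathcal F}^\perp$, seen as maps between leaves of ${\mathcal F}$, are obtained by integrating fields tangent to ${\mathcal F}^\perp$, i.e.\ orthogonal to ${\mathcal F}$, whose flows preserve ${\mathcal F}$ --- and these are precisely the ${\mathcal F}$-normal foliated vector fields of \S\ref{tangential}, i.e.\ precisely the fields appearing in Lemma \ref{Lie.derivative} and Corollary \ref{Lie.flow}. (This is also the convention stated at the start of \S\ref{transverse}, with the roles of ${\mathcal F}$ and ${\mathcal F}^\perp$ exchanged: there the holonomy of ${\mathcal F}$ is realized by ${\mathcal F}^\perp$-normal foliated fields.) With that single correction your argument goes through verbatim: Corollary \ref{Lie.flow} says ${\mathcal F}$ is geodesic (resp.\ umbilic) exactly when all such flows map leaves of ${\mathcal F}$ isometrically (resp.\ conformally) onto leaves of ${\mathcal F}$, which is the statement that ${\mathcal F}^\perp$ is transversally pseudo-Riemannian (resp.\ transversally conformal).
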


\subsection{Proof of Fact \protect{\ref{geodesic.extension}}.}

Let  $S $ be  a submanifold 
of $N$, and   $M = L \times _w N$. In order to prove the equivalence, 
$S$ a geodesic submanifold  in $N$ $ \Longleftrightarrow$   $ L \times S$ a geodesic submanifold  in $M$, 
it suffices to consider the case where the dimension of $S$
is 1, i.e. $S$ a (non-parameterized) geodesic (curve). Indeed the general case reduces to 
the   1-dimensional one by considering geodesic (curves) of $S$.

To simplify let us  suppose that $N$ is Riemannian, the general
case needs only  more notations. 

A geodesic such as $S$ can be locally extended 
to a 1-dimensional foliation ${\mathcal F}$ with an orthogonal foliation 
${\mathcal F}^\perp$. To see this, take a hypersurface 
$S^\perp \subset  N $ which is somewhere orthogonal to $S$, then 
the leaves of ${\mathcal F}^\perp$
are the parallel hypersurfaces of $S^\perp$. More precisely, they are 
the levels of the distance function $x \to a(x)= d(x, S^\perp)$. The 
leaves of ${\mathcal F}$ are the trajectories of 
$\nabla a$, the gradient of $a$. Thus ${\mathcal F}^\perp$
is a transversally pseudo-Riemannian foliation of $N$. By taking the
product of
the leaves of ${\mathcal F} $ with $L$, one may define $L \times {\mathcal F}$  
as a foliation of $M$. The orthogonal foliation
 $(L \times {\mathcal F})^\perp$ of 
$L \times {\mathcal F}$ is naturally identified 
with ${\mathcal F}^\perp$ (the leaf of 
$(x, y) \in L \times N$ is 
$\{x\} \times {\mathcal F}^\perp_y$).
From the form of the warped product
metric, one sees that, like 
 ${\mathcal F}^\perp$, $(L \times {\mathcal F})^\perp$
is a transversally pseudo-Riemannian foliation.
Therefore, $L \times {\mathcal F}$ is a geodesic foliation, and 
in particular $L \times S$ is geodesic in $M$.

The implication,  
 $ L \times S$ geodesic in $M
 \Longrightarrow S$ geodesic in $N$, is in fact 
 easier than  its  converse that we have just proved. Indeed, if $\nabla$ is the connection
on $N$, and $X$, $Y$ are vector fields tangent to 
$S$, then $\nabla_XY$ is tangent to
$L \times S$ (since it is geodesic), and hence its
orthogonal  projection on $N$ 
is tangent to $S$, that is,  
$S$ is geodesic in $N$.

\begin{remark} Although, we are not interested here in  global aspects, let us mention
that there are many works  about  the structure of geodesic, umbilic,  transversally  Riemannian, transversally conformal foliations on compact manifolds. As an example, we may quote   the references \cite{Bru-Ghys, Car-Ghys,  Mol}.

\end{remark}

\section{Isometric actions of Lie groups}

(Local) 
isometric actions of Lie groups on pseudo-Riemannian 
manifolds 
generally give rise to a warped product structure.  
In some sense, this phenomenon is the converse of the 
trivial isometric extension Fact \ref{isometry.extension}.
The following  statement 
may be used to 
settle a warped product structure in many situations.
It 
 unifies and generalizes most of the existing results 
on the subject (see  for instance
 \cite{G-K, Har, Kar, Lap, Mat, Pod}).

\begin{theorem} 
\label{action.criterion.1}
Let $G$ be a Lie group acting (locally) 
isometrically on a pseudo-Riemannian manifold $M$. Suppose that 
the orbits have a constant dimension and
thus determine a foliation ${\mathcal N}$. 

Suppose further that the leaves of ${\mathcal N}$ are non-degenerate, and 
that the isotropy group in $G$ of any $x \in M$, 
acts {\em absolutely irreducibly} on 
$T_x {\mathcal N}$, i.e. its complexified representation  is
irreducible. 

Suppose
finally that  
 the orthogonal of ${\mathcal N} $ is integrable, say it is tangent to 
a foliation 
${\mathcal L}$. 
Then $({\mathcal L}, {\mathcal N})$ 
determines a local warped product structure, with 
${\mathcal N}$
as a normal factor.

\end{theorem}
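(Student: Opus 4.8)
The plan is to verify the three hypotheses of the characterization Theorem~\ref{characterization}: orthogonality of $\mathcal L$ and $\mathcal N$ is given, so it remains to show that $\mathcal L$ is geodesic and that $\mathcal N$ is spherical. The whole strategy is to manufacture these two tensorial conditions out of the $G$-action, using the Lie-derivative formula of Lemma~\ref{Lie.derivative} for the geodesic part and Schur's lemma (through the absolute irreducibility hypothesis) for the umbilic and spherical parts.

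First I would treat $\mathcal L$. The Killing fields $X$ generating the (local) $G$-action are tangent to the orbits, hence to $\mathcal N$, so they are orthogonal to $\mathcal L$; moreover their flows lie in $G$, act by isometries, and preserve $\mathcal N=\mathcal L^{\perp}$, hence preserve $\mathcal L$ as well. Thus each such $X$ is an $\mathcal L$-normal foliated vector field. Being Killing, $L_X\langle,\rangle=0$, and since the flow also preserves the splitting $T\mathcal L\oplus T\mathcal N$ it preserves the first fundamental form $f$ of $\mathcal L$, i.e. $L_Xf=0$. Lemma~\ref{Lie.derivative} then reads $\langle II^{\mathcal L}(u,v),X\rangle=0$ for all $u,v\in T\mathcal L$. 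As $x$ varies the fields $X$ span $T_x\mathcal N$ (the orbit is the leaf of $\mathcal N$), and the metric there is non-degenerate by hypothesis; hence $II^{\mathcal L}=0$ and $\mathcal L$ is geodesic. (Integrability of $\mathcal L$ is used here only to make $II^{\mathcal L}$ the genuine second fundamental form of a foliation.)

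Next, $\mathcal N$. Its second fundamental form $II^{\mathcal N}$ is a $G$-equivariant, symmetric, $T\mathcal L$-valued form on $T\mathcal N$; fixing $x$ and writing $V=T_x\mathcal N$, $W=T_x\mathcal L$, the isotropy $G_x$ acts on $V$ absolutely irreducibly and on $W$, and preserves $II^{\mathcal N}_x$. For $\xi\in W$ let $A_\xi$ be the self-adjoint Weingarten operator on $V$, and split $A_\xi=\lambda(\xi)\,\mathrm{Id}+\tilde A_\xi$ into trace and trace-free parts; umbilicity is the vanishing of $\tilde A$. The engine is that, by absolute irreducibility, the only $G_x$-invariant symmetric bilinear form on $V$ is a multiple of the metric (Schur), so any shape operator that is genuinely $G_x$-invariant is scalar. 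Contracting $II^{\mathcal N}$ against any $G_x$-invariant normal direction $\xi\in W^{G_x}$ thus already yields $A_\xi=\lambda(\xi)\,\mathrm{Id}$, since $\mathrm{Sym}^2_0 V$ carries no trivial summand. I expect the remaining cases to be the main obstacle: a general $\xi$ is fixed only by its stabilizer in $G_x$, so one cannot apply Schur to $A_\xi$ directly, and one must instead kill the whole trace-free part $\tilde A\in\mathrm{Hom}_{G_x}(W,\mathrm{Sym}^2_0 V)$. This amounts to excluding a common non-trivial $G_x$-constituent of $W$ and $\mathrm{Sym}^2_0 V$, and it is exactly the point where \emph{absolute} (not merely real) irreducibility is indispensable. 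I would resolve it by bringing in the Codazzi equation for the leaves together with the fact, just established, that $\mathcal L$ is totally geodesic: this constrains the covariant derivative of $\tilde A$ along the orbit, and combined with transitivity of $G$ on the leaf it forces $\tilde A\equiv 0$, so $II^{\mathcal N}=\langle,\rangle\,\vec n$ is umbilic.

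Finally, once $\mathcal N$ is umbilic the shape vector field $\vec n$ is $G$-invariant, and sphericity means $\nabla^{\perp}_u\vec n=0$ for $u\in T\mathcal N$ (this is precisely the condition $\langle\nabla_Y\vec n,X\rangle=0$ appearing in the proof of Theorem~\ref{characterization}). The map $u\mapsto\nabla^{\perp}_u\vec n$ is a $G_x$-equivariant linear map $V\to W$, so the same Schur-type argument (with the same caveat about common constituents, handled as above) forces it to vanish; hence $\vec n$ is parallel along the leaves and $\mathcal N$ is spherical. With $\mathcal L$ geodesic and $\mathcal N$ spherical, Theorem~\ref{characterization} delivers the local warped product structure with $\mathcal N$ as normal factor, as claimed.
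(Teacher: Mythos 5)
The first half of your proposal is sound: the Killing fields generated by the action are ${\mathcal L}$-normal foliated, their flows preserve the first fundamental form of ${\mathcal L}$, so Lemma \ref{Lie.derivative} gives $\langle II^{{\mathcal L}}(u,v), X\rangle = 0$; since these fields span $T_x{\mathcal N}$ and the metric there is non-degenerate, ${\mathcal L}$ is geodesic. (Correctly, this step uses no irreducibility at all; in the paper's $\SL(2,\C)$ example ${\mathcal L}$ is still geodesic while ${\mathcal N}$ fails to be umbilic.) The gap is exactly at the point you flag and then wave away. For umbilicity you must kill $\tilde A \in \mathrm{Hom}_{G_x}\bigl(T_x{\mathcal L}, \mathrm{Sym}^2_0 T_x{\mathcal N}\bigr)$, and for sphericity the $G_x$-equivariant map $u \mapsto \nabla^{\perp}_u\overrightarrow{n}$; nothing in the hypotheses prevents $T_x{\mathcal L}$ from containing a $G_x$-constituent isomorphic to one of $\mathrm{Sym}^2_0T_x{\mathcal N}$, or a copy of $T_x{\mathcal N}$ itself, so pointwise Schur cannot conclude. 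Your proposed repair --- ``Codazzi plus ${\mathcal L}$ totally geodesic plus transitivity forces $\tilde A\equiv 0$'' --- is an assertion, not an argument: you never write the Codazzi identity, never exhibit the constraint it places on $\tilde A$, and transitivity along a single leaf only says that $II^{{\mathcal N}}$ is equivariantly constant along that leaf, which is no obstruction to its being a nonzero equivariant tensor. The information needed is transverse (how the metrics on nearby leaves compare), whereas Codazzi is a tangential identity on one leaf; as written, the proof does not close.

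The idea you are missing --- and it is the whole content of the paper's proof --- is to apply Schur not to the infinitesimal invariant $II^{{\mathcal N}}$ but to the integrated holonomy of ${\mathcal L}$. For two nearby leaves $N_1, N_2$ of ${\mathcal N}$, the projection $p: N_1\to N_2$ obtained by sliding along the leaves of ${\mathcal L}$ is canonically defined, and it is automatically $G$-equivariant: each leaf of ${\mathcal N}$ is a single $G$-orbit, hence $G$-invariant, and $G$ preserves ${\mathcal L}$, so any $g\in G_x$ fixes $p(x)$ and satisfies $p\circ g = g\circ p$. Therefore the pullback by $p$ of the metric of $N_2$ is a $G_x$-invariant symmetric form on $T_xN_1$, and absolute irreducibility makes it proportional to the metric of $N_1$ there: every holonomy map of ${\mathcal L}$ is conformal, which by Fact \ref{geodesic.critere} is precisely umbilicity of ${\mathcal N}$. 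Equivariance under the transitive $G$-action then forces the conformal factor to be constant, i.e. $p$ is homothetic, which is the upgrade from twisted to warped product (your spherical condition); together with ${\mathcal N}$ being transversally pseudo-Riemannian (the transverse dual of your geodesic step), Fact \ref{transverse.critere} concludes. The crucial point is that equivariance is available for these finite holonomy maps even though in general there is no $G$-invariant vector field normal to ${\mathcal N}$ --- which is exactly the obstacle your pointwise, direction-by-direction setup cannot get around.
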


\begin{proof}
The question is local, and so  we can 
suppose  
the situation is topologically 
trivial. For two nearby leaves
$ N_1$ and $ N_2$, there is 
a projection $p:  N_1 \to  N_2$, defined by: \;  
$p(x)$  is the unique point of the intersection 
of  ${\mathcal L}_x $ ($ ={\mathcal N}^\perp_x $) with $ N_2$ (for $x \in N_1$).
This projection commutes with the action of $G$.
The pull back by $p$ of the metric  
 on $T_yN_2$
(at $y = p(x)$) is 
another  metric on $T_xN_1$, invariant 
by $G_x$. The fact that $G_x$ is absolutely irreducible
just implies that the two metrics are proportional.
Therefore $p$ is conformal. But since $p $
commutes with the (transitive) $G$-action on $N_1$ and $N_2$, 
$p$ must be homothetic.

It is easy to relate the projection 
$p$ 
to the transverse holonomy of ${\mathcal L}$ (as developed in 
 \S \ref{transverse}), proving   that 
${\mathcal L}$ is transversally homothetic. It is 
equally straightforward to relate the transverse 
holonomy of ${\mathcal N}$
to the $G$-action, and  deducing   that ${\mathcal N}$
is transversally pseudo-Riemannian, and therefore
$({\mathcal L}, {\mathcal N})$ determines a local warped
product structure (by Fact \ref{transverse.critere}).

\end{proof}

\begin{example} {\em  The absolute irreducibility 
hypothesis cannot be relaxed to an ordinary irreducibility one.
To see this let  $N$ be a Lie group, and let $G$ be the product 
$N \times N$ acting on $N$ by the left and the right, that is
$(\gamma_1, \gamma_2)x = \gamma_1^{-1}x\gamma_2$. 
The isotropy group of this action at 
the point $1$, is nothing but the adjoint action of 
$N$ on itself. It  is irreducible (resp. 
absolutely irreducible) iff $N$ is a simple 
(resp. an absolutely simple) Lie group (by definition).
In the case  $N$ is  simple but non absolutely simple,
e.g. $\SL(2, {\Bbb C})$,  the isotropy action  preserves 
exactly (up to linear combination) two non-degenerate quadratic forms, 
those given by the real  and the imaginary parts of the 
Killing form of $N$, seen as a complex group. This 
gives  
    two $G$-invariant non-proportional 
metrics 
$\alpha$ and $\beta$ on $N$.

Let  $(L, h)$ be  another 
pseudo-Riemannian manifold, and let $ f: L \to 
{\Bbb R}$ be a real function. Endow 
$L \times N$ with the metric $h \bigoplus (f\alpha + \beta)$.
This is not  a warped product.
}
\end{example}

The following result describes an example of a situation
where the hypotheses of Theorem  
\ref{action.criterion.1} are satisfied.

\begin{theorem}
\label{action.criterion.2}
Let $G$ be a Lie group acting (locally) 
isometrically on a pseudo-Riemannian manifold $M$. Suppose that 
the orbits are non-degenerate  having  a constant dimension and
so  determine a foliation ${\mathcal N}$. 

Suppose that 
 the isotropy group in $G$ of any $x \in M$, 
acts absolutely  irreducibly on 
$T_x {\mathcal N}$,  
and that  the metric on  
 the orthogonal of ${\mathcal N}$ is 
 definite (positive or negative), 
and in opposite the metric 
on ${\mathcal N}$ is non-definite. 
Then,  the orthogonal of ${\mathcal N} $ is
 integrable, and  the action determines a local
warped product.

\end{theorem}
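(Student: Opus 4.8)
The plan is to deduce everything from Theorem \ref{action.criterion.1} by establishing its single missing hypothesis, the integrability of ${\mathcal N}^\perp$; the two opposite-signature assumptions will be used precisely to force this integrability, via a representation-theoretic argument on the isotropy action. So the whole content is to show that ${\mathcal N}^\perp$ is integrable, after which Theorem \ref{action.criterion.1} applies verbatim.

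First I would set up the integrability obstruction as an equivariant tensor. Write $\mathcal{V}=T{\mathcal N}$, $\mathcal{H}=T{\mathcal N}^\perp$, and let $\mathcal{V}(\cdot)$ denote orthogonal projection onto $\mathcal{V}$. By Frobenius, $\mathcal{H}$ is integrable iff $A(X,Y):=\mathcal{V}[X,Y]$ vanishes for all sections $X,Y$ of $\mathcal{H}$. A one-line computation using $[fX,Y]=f[X,Y]-(Yf)X$ (and the fact that $X$ is horizontal, so $\mathcal{V}(X)=0$) shows $A$ is $C^\infty(M)$-bilinear and antisymmetric, hence at each point defines a linear map $A_x:\Lambda^2\mathcal{H}_x\to\mathcal{V}_x$. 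Since each $g\in G$ carries orbits to orbits and acts isometrically, $dg$ preserves both $\mathcal{V}$ and $\mathcal{H}$, and $A$ is manifestly natural under such maps; restricting to the isotropy $G_x$ I obtain a $G_x$-equivariant map $A_x:\Lambda^2\mathcal{H}_x\to\mathcal{V}_x$. Everything now reduces to proving $A_x=0$.

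Next I would play the two signature hypotheses against each other. Because the metric on $\mathcal{H}_x$ is definite, $G_x$ preserves a positive-definite form there (take the negative of the metric if it is negative definite), so the closure of its image in $\GL(\mathcal{H}_x)$ is compact; consequently $\Lambda^2\mathcal{H}_x$ is completely reducible and carries a $G_x$-invariant positive-definite inner product induced from that on $\mathcal{H}_x$. On the other side $\mathcal{V}_x$ is absolutely irreducible, so the image of $A_x$, being a $G_x$-submodule, is either $0$ or all of $\mathcal{V}_x$. Suppose $A_x\ne0$; then $A_x$ is onto, and by complete reducibility of the source the surjection splits, so some submodule $W\subset\Lambda^2\mathcal{H}_x$ is carried isomorphically onto $\mathcal{V}_x$. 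Restricting the definite invariant inner product of $\Lambda^2\mathcal{H}_x$ to $W$ and transporting it along this isomorphism yields a $G_x$-invariant \emph{definite} bilinear form on $\mathcal{V}_x$. But absolute irreducibility forces the space of invariant bilinear forms on $\mathcal{V}_x$ to be at most one-dimensional (Schur), so this form must be a scalar multiple of the given metric on $\mathcal{V}_x$, which is non-definite by hypothesis. A definite form cannot be a nonzero multiple of a non-definite one, a contradiction. Hence $A_x=0$ at every $x$, so ${\mathcal N}^\perp$ is integrable, tangent to a foliation ${\mathcal L}$.

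With integrability established, all the hypotheses of Theorem \ref{action.criterion.1} hold, and that theorem furnishes the local warped product structure with ${\mathcal N}$ as normal factor. The crux — and the unique place where both signature assumptions are essential — is the final step: definiteness of $\mathcal{H}$ produces complete reducibility together with a definite invariant form on $\Lambda^2\mathcal{H}_x$, while non-definiteness of $\mathcal{V}$ combined with absolute irreducibility pins down its invariant form up to scale and delivers the contradiction. I expect the genuine obstacle to be bookkeeping rather than ideas: verifying carefully that $A$ is a tensor and that it is $G_x$-equivariant in the merely local action setting, the representation theory itself being routine once that is in place.
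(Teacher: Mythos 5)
Your proposal is correct and follows essentially the same route as the paper: both reduce the statement to the integrability of ${\mathcal N}^\perp$ so that Theorem \ref{action.criterion.1} applies, both use the tangential projection of the bracket of orthogonal fields as a $G_x$-equivariant obstruction tensor, and both play the definiteness of the metric on ${\mathcal N}^\perp$ (giving compactness) against the non-definiteness of the metric on the orbits (via irreducibility) to force that tensor to vanish. The only divergence is the endgame: you split the surjection $\Lambda^2 T_x{\mathcal N}^\perp \to T_x{\mathcal N}$ and invoke Schur's lemma for the absolutely irreducible isotropy module, whereas the paper transports compactness through the span of the image of the unit ball and then polarizes the two invariant forms into a diagonalizable invariant endomorphism --- a variant that, as the paper notes, establishes integrability under ordinary (not necessarily absolute) irreducibility.
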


\begin{proof} The warped product structure will follow
from Theorem 
\ref{action.criterion.1} once  we show that 
the orthogonal of ${\mathcal N}$ is integrable. We will in fact  prove this 
integrability,  
under the hypothesis  that the isotropy is irreducible (not 
necessarily 
absolutely irreducible).
Consider $\alpha: 
T {\mathcal N}^\perp  \times T{\mathcal N}^\perp
\to T{\mathcal N}$ the  bilinear form 
(obstruction to the integrability 
of $T{\mathcal N}^\perp$) 
$\alpha (u, v)= $ the projection 
on 
$T{\mathcal N}$ of the bracket 
$[u, v]$, where $u$ and $v$ are 
 vector fields on $M$ with values in  $ T{\mathcal N}^\perp$.
  Let $x \in M$, and consider the 
subset $A_x$ of $T_x{\mathcal N}$ 
which consists of the elements
$\alpha(u, v)$, for 
$u$ and $v$ of length $\leq 1$. This set is 
compact, and is invariant by the isotropy 
group 
$G_x$. Since $\alpha $
is equivariant, $G_x$ acts 
precompactly on $A_x$ since it acts so 
on $T_x{\mathcal N}^\perp$. It then follows that 
$G_x$
acts precompactly on the linear space $B_x$ generated by 
$A_x$. If $A_x = 0$, $\alpha= 0$, and we are done, if not
$B_x = T_x{\mathcal N}$
by irreducibility.  Thus, $G_x$   preserves a positive scalar 
product 
on 
$T_x{\mathcal N}$. 
 But, it also preserves the initial non-definite 
pseudo-Riemannian product. Polarize this latter with respect to
the invariant positive 
scalar product, we get a diagonalizable endomorphism, that has
both positive and negative eigenvalues since 
the pseudo-Riemannian product 
is non-definite. This contradicts the irreducibility.

\end{proof}

A similar  argument yields the following useful fact.

\begin{fact}
\label{dimension4}
 Let $\SO(3)$ act isometrically on a $4$-Lorentz manifold
with $2$-dimensional orbits. 
Then, this determines
a local warped product structure, with a local model
$L \times_w S^2$ or $L \times_w {\Bbb R}P^2$. (One may 
exclude the projective plane case by a suitable orientability
hypothesis).

\end{fact}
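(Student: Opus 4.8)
The plan is to reduce everything to Theorem \ref{action.criterion.1}, verifying its hypotheses one at a time and repairing the single place where they break down. First I would pin down the isotropy. Since the orbits have dimension $2$ and $\dim\SO(3)=3$, the isotropy group $G_x$ of a point is a $1$-dimensional closed subgroup of the compact group $\SO(3)$; its identity component is a circle, and the only subgroups sitting between $\SO(2)$ and its normalizer $\O(2)$ are these two themselves. Hence $G_x$ is conjugate to $\SO(2)$ or to $\O(2)$, and the orbit $\SO(3)/G_x$ is $S^2$ or ${\Bbb R}P^2$, carrying its round, constant-curvature metric. The isotropy representation on $T_x{\mathcal N}$ is then the standard rotation representation of $\SO(2)$ on $\R^2$, respectively the standard representation of $\O(2)$; in both cases it is irreducible over $\R$, and in both cases the space of invariant \emph{symmetric} bilinear forms is $1$-dimensional (for $\O(2)$ this is absolute irreducibility; for $\SO(2)$ one checks directly that a symmetric matrix commuting with all rotations is scalar). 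Consequently the induced metric on each orbit is a nonzero multiple of the round metric, hence definite, and since a Lorentz $4$-manifold contains no totally null $2$-plane the orbit is automatically non-degenerate, so $T{\mathcal N}^\perp$ is a non-degenerate plane field of Lorentzian signature $(1,1)$.

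Next I would prove that $T{\mathcal N}^\perp$ is integrable. Here the signatures are opposite to those of Theorem \ref{action.criterion.2} (the orthogonal is now the indefinite factor), so I cannot invoke it directly, but a shorter variant of its argument applies. The integrability obstruction $\alpha(u,v)=\mathrm{pr}_{T{\mathcal N}}[u,v]$ is tensorial and alternating, so it defines a $G_x$-equivariant linear map $\wedge^2 T_x{\mathcal N}^\perp \to T_x{\mathcal N}$. Its source is $1$-dimensional while its target is an irreducible $G_x$-module of dimension $2$; the image, being an invariant subspace of dimension at most $1$, must be $\{0\}$. Therefore $\alpha\equiv 0$ and $T{\mathcal N}^\perp$ integrates to a foliation ${\mathcal L}$. (Equivalently, the compact group $G_x$ acts through a finite quotient on the Lorentzian plane $T_x{\mathcal N}^\perp$, hence trivially on the line $\wedge^2 T_x{\mathcal N}^\perp$ in the connected case, and an equivariant map from a trivial line into the fixed-point-free module $T_x{\mathcal N}$ vanishes.)

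Finally I would feed $({\mathcal L},{\mathcal N})$ into Theorem \ref{action.criterion.1}. All its hypotheses now hold except that, in the $S^2$ case, the isotropy $\SO(2)$ is irreducible but \emph{not} absolutely irreducible — this is the one genuine obstacle, since the theorem is stated under absolute irreducibility. The hard part is therefore to see that absolute irreducibility is not really needed here. I would circumvent it by appealing to the proof rather than the statement: absolute irreducibility is used there only to force the leaf-to-leaf projection $p$ to be conformal, and that step requires merely the uniqueness up to scale of $G_x$-invariant metrics on $T_x{\mathcal N}$, which I established above for $\SO(2)$ as well. Thus $p$ is conformal, hence, being equivariant under the transitive $G$-action, homothetic; ${\mathcal L}$ is transversally homothetic and ${\mathcal N}$ transversally pseudo-Riemannian, and Fact \ref{transverse.critere} yields the local warped product $L\times_w S^2$ or $L\times_w{\Bbb R}P^2$ with ${\mathcal N}$ as normal factor. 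Since ${\Bbb R}P^2$ is non-orientable, assuming for instance that the orbits (or $M$) are oriented rules out the second model and leaves $L\times_w S^2$.
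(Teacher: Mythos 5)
Your proof is correct, and it follows the skeleton the paper intends --- the paper's entire proof of Fact \ref{dimension4} is the sentence ``a similar argument yields the following useful fact'', pointing back at Theorems \ref{action.criterion.1} and \ref{action.criterion.2} --- but your two key steps are genuine repairs rather than transcriptions, and both are needed. First, the integrability argument of Theorem \ref{action.criterion.2} does not transfer verbatim: there the orbits are non-definite and the orthogonal definite, and the contradiction comes from the isotropy preserving simultaneously a positive form and a non-definite form on the orbits; here the orbit metric is already definite, so that mechanism is empty. Your substitute --- $\alpha$ factors through the line $\wedge^2 T_x{\mathcal N}^\perp$, so its image is an invariant subspace of dimension at most $1$ in the irreducible $2$-dimensional module $T_x{\mathcal N}$, hence zero --- is the right adaptation, and your parenthetical version (compactness of $G_x$ forcing its connected part to act trivially on the Lorentzian plane) is the closest in spirit to the paper's precompactness argument. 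Second, you correctly notice that Theorem \ref{action.criterion.1} cannot be quoted as stated in the $S^2$ case, because the rotation representation of $\SO(2)$ is irreducible but \emph{not} absolutely irreducible; this is a real subtlety, since the paper's own $\SL(2,{\Bbb C})$ example shows that plain irreducibility is insufficient in general. Your fix --- the proof of Theorem \ref{action.criterion.1} uses absolute irreducibility only to get that invariant symmetric bilinear forms on $T_x{\mathcal N}$ form a line, which holds for $\SO(2)$ because its commutant $\{aI+bJ\}$ contributes only skew-symmetric elements beyond the scalars --- is exactly the needed observation, after which Fact \ref{transverse.critere} concludes as you say. In short: same route as the paper's one-line sketch, with the two gaps that a literal reading would leave filled in correctly.
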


\section{Geodesics. Maupertuis' principle}

The goal here is to understand 
the geodesics of 
 a warped product $M = L \times_wN $. 
Let $\gamma(t) = (x(t), y(t))$ be 
such a geodesic.  

Fact 
\ref{geodesic.extension} implies that  $y(t)$
 is a (non-parametrized)
geodesic in $N$. To see this, let $S$ be a 
(1-dimensional) geodesic
of $N$ such that $\gamma(t) $ is somewhere 
tangent to $L \times S$.  Fact 
\ref{geodesic.extension} says that $L \times S$ is geodesic 
in $M$, and therefore contains the whole of 
$\gamma(t)$,  which thus    projects onto an open 
subset of the geodesic $S$.

Now, it remains   to draw equations, and especially to interpret them,
for $x(t)$, and also    determine the  parameterization 
of $y(t)$. Here, the idea 
is to replace $M$ by 
$L \times_w S$, which transforms the problem to
a simpler one, that is the 
 case  where $N$ has dimension $1$ (since $L \times_w S$ is geodesic in $M$, we do
 not need the rest of $M$ to understand  a geodesic contained in $L\times_w S$!).

\subsubsection*{Clairaut first integral.}
 
The previous discussion 
allows one 
to restrict the study to 
warped products of the  type
$L \times_w ({\Bbb R}, c_0dy^2)$, where $y$ denotes 
the canonical coordinate on ${\Bbb R}$, and 
$c_0$ is $-1$, $+1$ or 0. 
 Of course, the case $c_0 =0$, i.e.  when the 
non-parameterized geodesic $y(t)$ is lightlike, 
 does not
really  correspond to 
a pseudo-Riemannian structure, so, let us assume
 $c_0 \neq 0$.

Actually, the geodesic $S$ above in not 
necessarily complete, that is, 
it is not parameterized by ${\Bbb R}$ but just by an open subset
of it. However, our discussion here is local in nature, so to 
simplify notation,
we will assume   $S$  complete.

The isometric action of (the group) ${\Bbb R}$ on  
$ ({\Bbb R}, c_0dy^2)$
extends to an isometric flow on 
$L \times_w ({\Bbb R}, c_0dy^2)$ (by  
Fact \ref{isometry.extension}). 

The so called 
Clairaut first integral 
(for  
the geodesic flow on the tangent bundle of 
$L \times_w ({\Bbb R}, c_0dy^2)$) means here that 
$\langle y^\prime (t), \partial
 /\partial y \rangle  $
is constant, say, it equals $c_1$ (remember $\gamma(t) = (x(t), y(t)$ is our geodesic).
Since $y^\prime(t) $ and $\partial/ \partial y$
are collinear, 
it  follows that: 
$$\langle y^\prime(t), y^\prime(t)\rangle  = 
\frac{\langle y(t)^\prime, \partial /\partial y \rangle ^2} 
{\langle \partial /
\partial y, \partial /\partial y \rangle } =(\frac{c_1^2}{c_0}) \frac{1} {w(x(t))}$$

In dimension 2, that is, 
dim$L =1$,  the Clairaut integral together with
the energy integral: $\langle \gamma^\prime(t), \gamma^\prime(t) \rangle
 = $ constant,  suffice to understand completely
the geodesics. The remaining developments concern the case
dim$N \geq 2$.

\subsubsection*{The shape vector field.}

The distortion of the warped product  structure, i.e. the 
obstruction to  being  a direct product is encoded in 
$\nabla w$, the gradient of $w$ (with respect 
to the metric of $L$).  

Obviously, the fact that the foliation
${\mathcal N}$ 
(i.e. that with leaves 
$\{x\} \times N$)
be geodesic is also an obstruction for  
the warped  product  to be direct. The following 
fact is a quantitative version of this obstruction.

\begin{fact}
\label{shape.vector}
 The shape vector field 
$\overrightarrow{n}$  of  ${\mathcal N}$ 
  is a
${\mathcal N}$-foliated vector field.  More exactly 
(identifying  $TM $  with $ TL \times TN$):
$$\overrightarrow{n} (x,y) =
 \frac{-1}{2}(\frac{\nabla w (x) }{w}, 0)$$

\end{fact}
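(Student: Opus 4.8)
The plan is to extract the shape vector field $\overrightarrow{n}$ of $\mathcal{N}$ by pairing it against normal foliated vector fields and invoking Lemma \ref{Lie.derivative}, exactly in the spirit of the proof of Theorem \ref{characterization}. First I would record the relevant objects in product coordinates $M = L \times_w N$. The leaves of $\mathcal{N}$ are the slices $\{x\} \times N$, so $T\mathcal{N}$ is the $N$-direction and $T\mathcal{N}^\perp = TL$; consequently the first fundamental form $f$ of $\mathcal{N}$ vanishes on $TL$ and equals $wg$ on $TN$. A normal foliated vector field for $\mathcal{N}$ is, by the discussion preceding Theorem \ref{characterization}, precisely one of the form $X(x,y) = (\bar{X}(x), 0)$ with $\bar{X}$ a vector field on $L$; its flow $(x,y) \mapsto (\phi^t(x), y)$ visibly preserves $\mathcal{N}$.

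The computational core is to evaluate $(L_X f)(u,v)$ for such an $X$ and for vector fields $u = (0,\bar u(y))$, $v = (0,\bar v(y))$ tangent to $\mathcal{N}$ and commuting with $X$. Since $f(u,v) = w\, g(\bar u, \bar v)$ and $g(\bar u, \bar v)$ is a function of $y$ alone, differentiating along the $L$-direction annihilates that factor and leaves $(L_X f)(u,v) = (X\cdot w)\, g(\bar u,\bar v) = \frac{X\cdot w}{w}\, f(u,v)$. Here it is essential that $w$ be a function of $x$ only, which is exactly what distinguishes the warped from the merely \emph{twisted} case, and what makes $\nabla w$ a well-defined gradient on $(L,h)$.

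Next I would compare this with Lemma \ref{Lie.derivative} together with umbilicity. Writing $II = f\,\overrightarrow{n}$ for the umbilic form of the shape tensor, the Lemma gives $(L_X f)(u,v) = -2\, f(u,v)\,\langle \overrightarrow{n}, X\rangle$. Matching the two expressions and cancelling the generically nonzero scalar $f(u,v)$ yields $\langle \overrightarrow{n}, X\rangle = -\tfrac{1}{2}\,\frac{X\cdot w}{w}$. Since $\overrightarrow{n}$ is normal to $\mathcal{N}$ it lies in $TL$, and $X\cdot w = \langle \nabla w, X\rangle$ with the pairing taken in $(L,h)$; as $X$ ranges over all normal foliated fields the vectors $\bar{X}$ fill out $TL$, so non-degeneracy of $h$ forces $\overrightarrow{n} = -\tfrac{1}{2}\,\nabla w / w$, that is $\overrightarrow{n}(x,y) = \bigl(-\tfrac{1}{2}\,\nabla w(x)/w,\, 0\bigr)$.

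Finally, the foliated claim is then immediate from the formula: the right-hand side depends on $x$ alone and points in the $L$-direction, so $\overrightarrow{n}$ is itself of the shape $(\bar{Z}(x),0)$, which is exactly that of an $\mathcal{N}$-normal foliated vector field. I do not expect a serious obstacle here; the only point demanding care is the bookkeeping that identifies $\overrightarrow{n}$ as a genuine element of $TL$ and verifies that pairing it against the full family of normal foliated $X$ pins it down, which rests on the non-degeneracy of $h$ on the leaves of $\mathcal{L}$.
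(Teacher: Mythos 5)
Your proposal is correct and takes essentially the same route as the paper: the paper's (very terse) proof likewise applies Lemma \ref{Lie.derivative} with the umbilic relation $II = f\,\overrightarrow{n}$ on one side, and the direct computation $L_X(wg) = (X.w)g = \frac{X.w}{w}\,wg$ for a normal foliated $X$ on the other. The only difference is expository: you make explicit the steps the paper leaves implicit, namely that $w$ depends on $x$ alone, that $\overrightarrow{n}$ lies in $T\mathcal{L}$, and that letting $X$ range over all of $T\mathcal{L}$ pins down $\overrightarrow{n} = -\tfrac{1}{2}\nabla w/w$ by non-degeneracy of $h$.
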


\begin{proof}
 With the notations of Lemma \ref{Lie.derivative},
we have $f= wg$, and thus (by  definition of
$\overrightarrow{n}$)
$L_Xwg = -2  \overrightarrow{n} f$, and 
on the other hand
 $L_X(wg) = (X.w)g = \frac{X.w} {w} wg$.

\end{proof}

(Observe that $\frac{\nabla w  }{w}$ is well defined even 
for local warped product structures).

\subsubsection*{Projection onto $L$.}
We consider the case where $x(t)$ and 
$y(t)$ are regular curves, since the question is local and the 
other cases are quite easier. Therefore,  these curves
 generate a surface,
$(x(t), y(s))$, whose tangent bundle is generated
 by the natural 
frame $(X, Y)$. Since 
$X$ and $Y$ commute, we  have $\nabla_{X+Y} (X+Y)= \nabla_XX + 
2\nabla_XY + \nabla_YY$.

Since ${\mathcal L} $ is geodesic, $\nabla_XY$ is tangent to
${\mathcal N}$ (indeed, if $Z$ is tangent to ${\mathcal L}$, then, 
$\langle \nabla_XY, Z \rangle = 
X \langle Y, Z \rangle + \langle Y, \nabla_X Z \rangle 
=0$,   because ${\mathcal L} $ is geodesic). 
By definition, the projection of 
$\nabla_YY$ on 
$T{\mathcal L}$ equals $\langle Y,Y\rangle  \overrightarrow{n}$. 
From 
this and Fact \ref{shape.vector}, we deduce that 
the projection of $\nabla_{X+Y}(X+Y)$ on $T{\mathcal L}$ 
equals  $\nabla_XX -(1/2) \langle Y,Y\rangle  
\frac{\nabla w}{w}$, which
 must vanish 
in the geodesic case. Replacing $\langle Y, Y\rangle  $
 ($= \langle y^\prime(t), y^\prime(t)\rangle$)
by its expression above, we obtain:
$$\nabla_XX= 
( \frac{1}{2} \frac{c_1^2}{c_0}) \frac{1} {w(x(t))} 
\frac{ \nabla w}{w}
= ( - \frac{1}{2} \frac{c_1^2}{c_0})   
 (\nabla \frac{1}{ w}) (x(t))  $$
This proves the following.

\begin{fact}
\label{projection}
The projections onto  $L$ of the geodesics 
of $L \times_w ({\Bbb R}, c_0dy^2)$ are exactly the trajectories
 of the mechanical systems 
on $L$ with potentials $\frac{c}{w}$, i.e. curves on $L$ 
satisfying an equation of the form: 
$$x^{\prime \prime} = - \nabla (\frac{c}{w}) (x)$$ where 
$c$ runs over ${\Bbb R}^+$ (resp. ${\Bbb R}^-$) if   
$c_0 >0$ (resp. if $c_0 <0$).

\end{fact}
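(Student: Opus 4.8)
The plan is to take the geodesic equation $\nabla_{\gamma'}\gamma'=0$ for $\gamma(t)=(x(t),y(t))$, project it onto the $L$-direction, and read off the resulting second-order equation for $x(t)$. Everything needed is already in hand: the warped product structure makes $\mathcal{L}$ geodesic (Theorem \ref{characterization}), Fact \ref{shape.vector} gives the shape vector field of $\mathcal{N}$ explicitly, and the Clairaut relation computed above controls $\langle y',y'\rangle$. As in the set-up, I would view $x(t)$ and $y(t)$ as coordinate curves of a surface $(x(t),y(s))$ with commuting tangent frame $(X,Y)$, $X$ tangent to $\mathcal{L}$ and $Y$ to $\mathcal{N}$, so that $\gamma'=X+Y$ and $$\nabla_{X+Y}(X+Y)=\nabla_X X+2\nabla_X Y+\nabla_Y Y.$$

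First I would project each summand onto $T\mathcal{L}$. Because $\mathcal{L}$ is geodesic, $\nabla_X X$ stays tangent to $\mathcal{L}$, and the mixed term $\nabla_X Y$ falls into $T\mathcal{N}$ — indeed $\langle\nabla_X Y,Z\rangle=X\langle Y,Z\rangle+\langle Y,\nabla_X Z\rangle=0$ for every $Z$ tangent to $\mathcal{L}$ — so it drops out of the $L$-component. The $T\mathcal{L}$-part of $\nabla_Y Y$ is $\langle Y,Y\rangle\overrightarrow{n}$ by definition of the second fundamental form. Substituting $\overrightarrow{n}=-\tfrac12(\nabla w/w,0)$ from Fact \ref{shape.vector}, vanishing of the $L$-component of $\nabla_{\gamma'}\gamma'$ reads $$\nabla_X X=\tfrac12\,\langle Y,Y\rangle\,\frac{\nabla w}{w}.$$

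Next I would insert the Clairaut integral $\langle Y,Y\rangle=(c_1^2/c_0)\,w(x(t))^{-1}$; this is the step where the reduction to a one-dimensional normal factor is essential, since it turns $\langle Y,Y\rangle$ into an explicit function of $x$. Using $\nabla(1/w)=-\nabla w/w^2$, the right-hand side becomes an exact gradient, $$\nabla_X X=\frac{c_1^2}{2c_0}\,\frac{\nabla w}{w^2}=-\nabla\!\left(\frac{c}{w}\right),\qquad c=\frac{c_1^2}{2c_0},$$ which is the asserted mechanical equation $x''=-\nabla(c/w)$. Since $c_1^2\ge 0$, the constant $c$ carries the sign of $c_0$, giving $c\in{\Bbb R}^+$ for $c_0>0$ and $c\in{\Bbb R}^-$ for $c_0<0$, and every such $c$ is realized by choosing $c_1$ appropriately.

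Finally, for the reverse inclusion I would start from a solution $x(t)$ of $x''=-\nabla(c/w)$, pick $c_1$ with $c=c_1^2/(2c_0)$, define $y(t)$ by integrating the Clairaut prescription $\langle y',\partial/\partial y\rangle=c_1$, and check that $(x(t),y(t))$ is an honest geodesic of $L\times_w({\Bbb R},c_0dy^2)$. The $L$-component of its geodesic equation holds by the computation above; the $N$-component and the correct parameterization are where I expect the real work to lie, the main obstacle being to confirm that the Clairaut and energy integrals are mutually compatible, so that the curve solves the full equation and not merely its $L$-projection. Once this consistency is verified, the projected geodesics and the mechanical trajectories coincide exactly.
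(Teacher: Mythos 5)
Your proposal is correct and takes essentially the same approach as the paper: the same commuting frame $(X,Y)$ on the surface $(x(t),y(s))$, the same decomposition $\nabla_{X+Y}(X+Y)=\nabla_XX+2\nabla_XY+\nabla_YY$ projected onto $T{\mathcal L}$ using that ${\mathcal L}$ is geodesic together with Fact \ref{shape.vector}, followed by substitution of the Clairaut integral. The only divergence is that you flag the converse as remaining ``real work,'' whereas it is in fact immediate (and left implicit in the paper): once $y(t)$ is defined by the Clairaut prescription $y^\prime = c_1/(c_0\, w(x(t)))$, the fiber component of the geodesic equation is precisely the constancy of $\langle y^\prime, \partial/\partial y\rangle$, so it holds by construction and no Clairaut/energy compatibility issue arises.
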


From this, we deduce the following 
 fact for a  general $N$.

\begin{theorem} 
\label{Maupertuis}
[Maupertuis' principle] 
The projections onto  $L$ of the geodesics 
of $M = L \times_w N$ are exactly the trajectories of
 the mechanical systems 
on $L$ with potentials $\frac{c}{w}$,  for 
$c \in {\Bbb R}$ if the metric on $N$ is 
non-definite, 
for $c  \in {\Bbb R}^+$ if 
the metric on $N$ is positive definite, and 
for $c \in {\Bbb R}^-$ if the metric on 
$N$ is negative definite.

\end{theorem}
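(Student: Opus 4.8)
The plan is to bootstrap the one-dimensional fibre case, Fact \ref{projection}, to an arbitrary normal factor $N$ by means of the geodesic extension Fact \ref{geodesic.extension}, and then to read off the admissible range of the constant $c$ from the causal types of geodesics available in $N$.

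First I would fix a geodesic $\gamma(t)=(x(t),y(t))$ of $M=L\times_wN$. As already observed using Fact \ref{geodesic.extension}, the curve $y(t)$ runs inside a (nonparametrized) geodesic $S$ of $N$, and $L\times S=L\times_wS$ is geodesic in $M$; hence $\gamma$ is entirely a geodesic of the smaller warped product $L\times_wS$, whose fibre $S$ is one-dimensional. Conversely, every geodesic of any such $L\times_wS$ is, by the same fact, a geodesic of $M$. Therefore the set of $L$-projections of geodesics of $M$ is precisely the union, over all geodesics $S\subset N$, of the $L$-projections of geodesics of $L\times_wS$. This reduces the statement to the one-dimensional situation governed by Fact \ref{projection}, the only remaining task being to keep track, as $S$ varies, of the sign data entering that fact.

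Next I would parametrize each such $S$ so that the induced metric is $c_0\,dy^2$, normalizing $c_0$ to $+1$ when $S$ is spacelike and to $-1$ when $S$ is timelike (the lightlike case $c_0=0$ being excluded, as explained before Fact \ref{projection}). Fact \ref{projection} then asserts that the projection $x(t)$ solves $x''=-\nabla(c/w)$ with $c=\tfrac12\,c_1^2/c_0$, so that $c$ inherits the sign of $c_0$, i.e. of the causal type of $S$; moreover its converse tells us that, for each fixed $S$ and each admissible value of $c$, every trajectory of the potential $c/w$ is realized by some geodesic of $L\times_wS$.

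It remains to determine which signs of $c_0$ actually occur, and here the three cases of the theorem come apart according to the definiteness of $g$. If $g$ is positive definite every geodesic $S$ is spacelike, forcing $c_0>0$ and hence $c\in{\Bbb R}^+$ (the value $c=0$ being attained by the vertical geodesics $c_1=0$ that lie in a single leaf $L\times\{y\}$); if $g$ is negative definite every $S$ is timelike, giving $c_0<0$ and $c\in{\Bbb R}^-$; and if $g$ is indefinite then geodesics of both spacelike and timelike type pass through any point, so both signs of $c_0$ occur and $c$ sweeps out all of ${\Bbb R}$. Combining this with the two directions supplied by Fact \ref{projection} yields the claimed correspondence between $L$-projections of geodesics of $M$ and trajectories of the mechanical systems with potential $c/w$, $c$ ranging over the stated set. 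The main point requiring care is this last bookkeeping: matching each attainable causal type of fibre geodesic to the sign of $c$, and verifying the converse direction (that every admissible $c$ is realized) by choosing $S$ of the appropriate type and an initial condition $c_1$ with $\tfrac12\,c_1^2/c_0=c$.
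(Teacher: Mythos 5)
Your proof is correct and follows essentially the same route as the paper: the paper likewise deduces Theorem \ref{Maupertuis} by reducing, via the geodesic-extension Fact \ref{geodesic.extension}, to the one-dimensional fibre case $L\times_w S$ and then invoking Fact \ref{projection}, with exactly the sign bookkeeping you describe. The only point glossed over (in the paper as well) is the case of geodesics whose fibre component is lightlike, possible only when the metric on $N$ is indefinite; there $\langle Y,Y\rangle =0$ forces $\nabla_XX=0$, so such geodesics project to $c=0$ trajectories and the stated correspondence is unaffected.
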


\subsubsection*{Equations.}

In the case where $y(t)$ is not lightlike, its
parameterization  is  
 fully determined, whenever   $x(t)$ is known, by using the first integral
$\langle y^\prime(t), \partial / \partial y \rangle  = c_1$. 
Indeed   one can  identify $y^\prime(t)$ with 
$y^\prime(t) \partial / \partial y$, and 
thus with help 
of the notation above, 
 $y^\prime(t) = \frac {c_1} { \langle \partial / \partial y, \partial
/ \partial y\rangle (x(t))}= \frac{c_1} {c_0 w(x(t))}$.

There is no analogous equation
in the case
where $y(t)$ is lightlike. 
 Let us derive the general equation in another way
which covers the lightlike case. 
 From 
the calculation before Fact \ref{projection}, we have
$\nabla_YY + 2\nabla_YX = 0$. Now, for all $Z$ tangent to 
${\mathcal N}$, 
$\langle \nabla_YX, Z\rangle  = -\langle X, \nabla_YZ\rangle  = 
-\langle X, \overrightarrow{n}\rangle  \langle Y, Z\rangle $.
Therefore, $\nabla_YX = -\langle X, \overrightarrow{n}\rangle  Y=
 \frac{dw(X)}{2w} Y$, which proves:

\begin{fact} The curve $y(t)$ has a geodesic support, and
its parameterization is coupled with the 
companion curve $x(t)$ by means of the equation:
$$ y^{\prime \prime} = 
-\frac{\partial} { \partial t} (\log w) (x(t)) y^\prime$$

\end{fact}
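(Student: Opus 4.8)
The plan is to read off the equation for $y(t)$ from the $T\mathcal{N}$-component of the geodesic equation, complementary to the $T\mathcal{L}$-component that produced Fact~\ref{projection}. The geodesic-support assertion itself is already in hand: as explained at the start of this section, Fact~\ref{geodesic.extension} forces the projection $y(t)$ to run inside a one-dimensional geodesic $S$ of $N$, so it suffices to pin down its reparameterization. The virtue of deriving this intrinsically, rather than from the Clairaut integral $\langle y'(t),\partial/\partial y\rangle = c_1$, is that the argument never divides by $\langle \partial/\partial y,\partial/\partial y\rangle$ and therefore survives the lightlike case $c_0=0$, which is precisely the gap the previous method leaves open.

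Concretely, I would keep the commuting frame $(X,Y)$ on the surface $(x(t),y(s))$ and start from $\nabla_{X+Y}(X+Y)=\nabla_XX+2\nabla_XY+\nabla_YY=0$. Since $\mathcal{L}$ is geodesic, $\nabla_XX$ lies in $T\mathcal{L}$ and $\nabla_XY=\nabla_YX$ lies in $T\mathcal{N}$, while the $T\mathcal{L}$-part of $\nabla_YY$ equals $\langle Y,Y\rangle\overrightarrow{n}$ (this is exactly the data used for Fact~\ref{projection}). Projecting the geodesic equation onto $T\mathcal{N}$ kills $\nabla_XX$ and the $T\mathcal{L}$-part of $\nabla_YY$, leaving the tangential relation $\nabla_YY+2\nabla_YX=0$, where now $\nabla_YY$ denotes the intrinsic acceleration of $y(t)$ inside the leaf.

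The one real computation is $\nabla_YX$. I would note first that $\nabla_YX$ is tangent to $\mathcal{N}$, then pair it against an arbitrary $Z\in T\mathcal{N}$: from $\langle X,Z\rangle=0$ and umbilicity of $\mathcal{N}$ one gets $\langle\nabla_YX,Z\rangle=-\langle X,\nabla_YZ\rangle=-\langle X,\overrightarrow{n}\rangle\langle Y,Z\rangle$, hence $\nabla_YX=-\langle X,\overrightarrow{n}\rangle Y$. Substituting the shape vector from Fact~\ref{shape.vector} turns $-\langle X,\overrightarrow{n}\rangle$ into $\frac{dw(X)}{2w}$. Feeding this back into $\nabla_YY=-2\nabla_YX$ gives $\nabla_YY=-\frac{dw(X)}{w}\,Y$; identifying $\nabla_YY$ with $y''$ along the (intrinsically flat) geodesic support $S$ and recognizing $\frac{dw(X)}{w}=\frac{\partial}{\partial t}(\log w)(x(t))$ yields the stated equation. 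I do not expect a genuine obstacle here — the content is the clean bookkeeping of the two projections — and the only point needing care is the interpretation of the $T\mathcal{N}$-component $\nabla_YY$ as the ordinary second derivative $y''$ of the reparameterization, which is legitimate precisely because $y(t)$ already has geodesic support.
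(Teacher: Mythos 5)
Your proposal is correct and follows essentially the same route as the paper: project the geodesic equation $\nabla_{X+Y}(X+Y)=0$ onto $T\mathcal{N}$ to get $\nabla_YY+2\nabla_YX=0$, compute $\nabla_YX$ by pairing against $Z\in T\mathcal{N}$ using orthogonality and umbilicity, and substitute the shape vector from Fact \ref{shape.vector}. The signs and the final identification $\frac{dw(X)}{w}=\frac{\partial}{\partial t}(\log w)(x(t))$ all check out, so there is nothing to add.
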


\subsubsection*{Mechanics on $M$.} 
The previous discussion relates
the geodesics of $M$ to trajectories of mechanical 
systems on $L$. 
Let us  now start  with
a mechanical system
$\gamma^{\prime \prime} = - \nabla V$ on $M = L \times _wN$
itself.
 We assume that the 
potential $V$ is constant on the leaves $\{x\} \times N$, 
and thus may be   identified with  a function 
  on $L$.   Essentially, by  the same arguments, one proves.

\begin{proposition}
\label{mechanics}
  Consider on 
$M = L \times_wN$, the equation 
$\gamma^{\prime \prime} = - \nabla V$, where $V$ is a function on $L$.
Then, 
 the projections of its  trajectories on $N$ 
are non-parameterized geodesics of $N$, and their projections  
on $L$ are trajectories of mechanical systems 
on $L$ with potentials of the form
$V + \frac{c}{w}$, where $c$ runs over ${\Bbb R}$
if the metric on 
$N$ is nondefinite, and otherwise, $c$
 has the same sign as the metric of $N$.

\end{proposition}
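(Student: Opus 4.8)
\emph{Plan.} The idea is to rerun the proof of Theorem~\ref{Maupertuis} (the case $V=0$) while carrying along the extra force term, the point being that this term lives entirely in the ${\mathcal L}$-direction. Indeed, since $V$ is constant on the leaves $\{x\}\times N$ of ${\mathcal N}$, its differential annihilates $T{\mathcal N}$; hence the gradient $\nabla V$ (for the warped metric $\langle,\rangle$) is orthogonal to ${\mathcal N}$, i.e. tangent to ${\mathcal L}$, and coincides with the gradient of $V$ on $(L,h)$. Thus the force $-\nabla V$ is everywhere tangent to ${\mathcal L}$, which is exactly what lets the argument pass through with only a shift of potential.

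First I would establish that the $N$-projection $y(t)$ is a non-parametrized geodesic. Fix a time $t_0$ and let $S$ be the $1$-dimensional geodesic of $N$ to which $y'(t_0)$ is tangent. By Fact~\ref{geodesic.extension}, $L\times S$ is totally geodesic in $M$. Since the force $-\nabla V$ is tangent to ${\mathcal L}$ and $T{\mathcal L}\subset T(L\times S)$, the whole acceleration $\nabla_{\gamma'}\gamma'=-\nabla V$ stays tangent to $L\times S$; comparing with the solution of the same mechanical system computed intrinsically on the totally geodesic submanifold $L\times S$ and invoking uniqueness of solutions of the second order ODE, the trajectory remains in $L\times S$. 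Hence $y(t)$ stays on $S$, which proves the first assertion and simultaneously reduces the problem to a one-dimensional normal factor: we may replace $M$ by $L\times_w S = L\times_w({\Bbb R},c_0 dy^2)$, on which the same mechanical system, with the same potential $V$, is induced.

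It remains to project onto $L$. The translation flow of $({\Bbb R},c_0 dy^2)$ extends to an isometric flow of $L\times_w({\Bbb R},c_0 dy^2)$ by Fact~\ref{isometry.extension}, with Killing field $\partial/\partial y$; as $V$ is invariant under this flow, the Noether-type quantity $\langle\gamma'(t),\partial/\partial y\rangle$ is conserved, its derivative being $\langle -\nabla V,\partial/\partial y\rangle=-dV(\partial/\partial y)=0$ because $\partial/\partial y\in T{\mathcal N}$. This is the Clairaut first integral $\langle y'(t),\partial/\partial y\rangle=c_1$, yielding as before $\langle y',y'\rangle=(c_1^2/c_0)\,w(x(t))^{-1}$. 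Now I would project the equation $\nabla_{X+Y}(X+Y)=-\nabla V$ onto $T{\mathcal L}$ exactly as in the computation preceding Fact~\ref{projection}: the term $2\nabla_XY$ is tangent to ${\mathcal N}$ and drops out, $\nabla_XX$ contributes the $L$-acceleration $x''$, and by Fact~\ref{shape.vector} the ${\mathcal L}$-projection of $\nabla_YY$ is $\langle Y,Y\rangle\overrightarrow{n}=-\tfrac{1}{2}\langle Y,Y\rangle\,\nabla w/w$. Substituting the value of $\langle Y,Y\rangle$ reorganizes everything into
\[ x'' = -\nabla V + \tfrac{1}{2}\frac{c_1^2}{c_0}\frac{\nabla w}{w^2} = -\nabla\Bigl(V+\frac{c}{w}\Bigr),\qquad c=\frac{c_1^2}{2c_0}. \]
Since $c_1$ is an arbitrary real constant and the sign of $c_0$ is the sign of the squared length of the geodesic direction in $N$, the constant $c$ has the same sign as the metric of $N$ in the definite case and ranges over all of ${\Bbb R}$ in the indefinite case, which is precisely the claimed range.

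The main obstacle is the bookkeeping of the reduction together with the conservation law: one must verify carefully that the totally geodesic submanifold $L\times S$ is genuinely invariant under the flow of the force, so that the reduction to $\dim N=1$ is legitimate, and that the Clairaut integral survives the presence of a nonzero potential. Both points hinge on the same two facts, that $-\nabla V$ is tangent to ${\mathcal L}$ and that $V$ is ${\mathcal N}$-invariant. Everything else is the soft computation already carried out for Theorem~\ref{Maupertuis}, now merely shifted by the ${\mathcal L}$-tangent term $-\nabla V$, with the lightlike case $c_0=0$ excluded exactly as there.
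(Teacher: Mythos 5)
Your proposal is correct and follows exactly the route the paper intends: the paper's own ``proof'' of Proposition~\ref{mechanics} is simply the remark that it follows ``essentially by the same arguments'' as Theorem~\ref{Maupertuis}, and you have filled in precisely those arguments --- reduction to $L\times_w S$ via Fact~\ref{geodesic.extension} (legitimized by noting $-\nabla V$ is tangent to ${\mathcal L}$), the Clairaut/Noether integral (surviving because $V$ is ${\mathcal N}$-invariant), and the projection computation with the shape vector field of Fact~\ref{shape.vector}, yielding the shifted potential $V+\frac{c}{w}$ with the correct sign range for $c$. No gaps; this is the paper's argument made explicit.
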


\begin{corollary}
\label{mechanics.surface}
  
 If $M = L \times_wN$, has dimension 2, 
i.e. $L$ and $N$ are locally isometric to ${\Bbb R}, \pm dt^2)$, then,  
  the trajectories of the  equation 
$\gamma^{\prime \prime} = - \nabla V$, where $V$ is a function on $L$ 
are completely  determined by means of:

i) their projection on $L $ satisfy $x^{\prime \prime}  = V + \frac{c}{w}$, and

ii) they satisfy the the energy conservation law
$\langle \gamma^\prime(t), \gamma^\prime(t) \rangle
 + V(\gamma(t)) = $ constant

\end{corollary}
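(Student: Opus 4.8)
The plan is to reduce the motion to quadratures by exhibiting two independent first integrals and observing that in the two-dimensional case they already exhaust the degrees of freedom. The two integrals are the energy (ii) and a Clairaut-type fibre momentum, and the real content of the statement is that together they pin down $\gamma$.

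First I would dispose of (ii), which is nothing but energy conservation for the mechanical system $\gamma'' = -\nabla V$ and uses nothing about the warped structure: differentiating $t \mapsto \langle \gamma'(t), \gamma'(t)\rangle + 2V(\gamma(t))$ gives $2\langle \nabla_{\gamma'}\gamma', \gamma'\rangle + 2\langle \nabla V, \gamma'\rangle$, which vanishes since $\nabla_{\gamma'}\gamma' = -\nabla V$.

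Next I would produce the Clairaut integral and from it deduce (i). As $V$ is constant along the leaves $\{x\}\times N$, there is no force in the $N$-direction, so with $Y = \partial/\partial y$ (a Killing field of each fibre, extended by Fact \ref{isometry.extension}) the quantity $\langle \gamma', Y\rangle$ is conserved: $\frac{d}{dt}\langle \gamma', Y\rangle = \langle \nabla_{\gamma'}\gamma', Y\rangle + \langle \gamma', \nabla_{\gamma'}Y\rangle = -\langle \nabla V, Y\rangle = 0$, the first term by the equation of motion and the Killing term by antisymmetry, and $\langle \nabla V, Y\rangle = 0$ because $\nabla V$ is tangent to $L$. Calling this constant $c_1$, one gets, exactly as in the Clairaut computation preceding Fact \ref{projection}, the relation $\langle y', y'\rangle = c_1^2/(c_0\, w(x))$. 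Feeding this into the $T\mathcal{L}$-component of $\gamma'' = -\nabla V$, which by Fact \ref{shape.vector} together with the geodesic property of $\mathcal{L}$ reads $\nabla_X X - \frac{1}{2}\langle Y, Y\rangle \frac{\nabla w}{w} = -\nabla V$, yields (i): the projection $x(t)$ is a trajectory of the one-dimensional mechanical system with potential $V + c/w$, where $c = c_1^2/(2c_0)$. This is just Proposition \ref{mechanics} specialized to $\dim N = 1$.

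It remains to see that (i) and (ii) \emph{determine} $\gamma$. Here I would combine them: substituting $\langle \gamma', \gamma'\rangle = \langle x', x'\rangle + c_1^2/(c_0\, w)$ into (ii) expresses $\langle x', x'\rangle$ as a function of $x$ alone, so along a trajectory $x'$ is fixed up to sign at each value of $x$; since $\dim L = 1$ this recovers $x(t)$ by a single quadrature, after which the companion $y(t)$ is read off from $y' = c_1/(c_0\, w(x(t)))$. The only genuine point is that in dimension $2$ the configuration space is two-dimensional, so these two independent integrals leave no freedom beyond the constants $c$ and the energy; the mildly delicate bookkeeping I would expect to be the main nuisance is tracking the signs $\pm 1$ of the two one-dimensional factors, and treating the lightlike case $c_0 = 0$ on its own, where $y(t)$ comes not from the Clairaut relation but from the companion equation $y'' = -\frac{\partial}{\partial t}(\log w)(x(t))\, y'$ of the preceding Fact.
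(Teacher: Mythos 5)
Correct, and essentially the paper's own route: the paper obtains (i) by specializing Proposition \ref{mechanics} (whose proof is the same Clairaut-plus-shape-vector-field computation you reproduce) and pairs it with energy conservation, these two sufficing in dimension $2$ exactly as in the paper's earlier remark that the Clairaut and energy integrals completely determine the motion when $\dim L = 1$. Two harmless observations: your differentiation conserves $\langle\gamma',\gamma'\rangle + 2V$, which is the correct quantity (the statement's (ii) drops a factor of $2$, and (i) should be read, as you do, as $x'' = -\nabla\bigl(V + \frac{c}{w}\bigr)$), and your final caveat about $c_0 = 0$ is vacuous here since $N$ is locally $(\mathbb{R}, \pm dt^2)$, so $c_0 = \pm 1$.
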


\begin{example} { \em This applies in particular to solve the geodesic equation 
on the hyperbolic plane ${\Bbb H}^2 = {\Bbb R} \times_{e^t} {\Bbb R}$.

}
\end{example}

\section{Examples. Exact solutions}

In the sequel, we will in particular   consider 
examples of warped product structures on  exact solutions,
i.e. 
 explicit 4-Lorentz manifolds with  an   
explicit Einstein tensor (for details, one may for instance consult
\cite{H-E}, \cite{Mis},  \cite{O'n}...).
In fact,  warped products are omnipresent in cosmological 
models, because of their simplicity and symmetry advantages,
 as explained in the introduction. However, the most important 
use of warped product is in formulating  expanding universes. 
This needs  the warped product to be 
of ``physical'' type. Let us formulate  precisely what we 
mean by this.

\begin{definition} 
\label{anti.physical}
We say  that a warped product structure on a 
  Lorentz 
manifold  $M = L \times_wN$ is  {\bf physical} 
if the metrics on
both   the factors $L$ 
and  $N$ are definite (one positive and the other  negative).
Otherwise, the warped product structure 
is called {\bf  anti physical}.
The same definitions apply for local warped products and 
   GRW structures.

\end{definition}

Equivalently, the warped product 
is physical if $N$ is
spacelike or
locally isometric to
$({\Bbb R}, -dt^2)$. The dynamical counterpart in the first case, i.e. 
when 
$N$ is spacelike, is that of a universe in expansion (\S
\ref{dynamic}), and in opposite, 
a warped product structure for
which 
$N$ is 
locally isometric to
$({\Bbb R}, -dt^2)$, 
 corresponds to a static universe (\S \ref{static}).
 
The warped product $M = L \times_w N$
is anti physical iff one of the factors $L$
or 
$N$ is Lorentzian.

\subsection{Expanding universes:  classical 
Robertson-Walker spacetimes.}
\label{dynamic}
Here, $L$  is an interval $(I, -dt^2)$, 
and $N$ a 3-Riemannian manifold of constant curvature.  
Recall that the  
-energy tensor satisfies
(or say, it is defined by)   the Einstein 
equation: $T = (1/ 8 \pi) (Ric- \frac{1}{2} Rg)$
($Ric$ and $R$  are respectively the Ricci and
scalar curvature of $\langle, \rangle$). Here, it has 
the form of a perfect fluid:
$T = (\mu + p) \omega \bigotimes \omega + p \langle, \rangle$,
where $\omega=$  the dual 1-form of $\frac{\partial} { \partial t}$ 
(with respect to the metric $\langle,\rangle$)
and the  functions
$\mu$ (energy density) and $p$ (pressure), 
are determined 
by the warping function $w$ 
(by means of 
 the Einstein equation). In fact,  the condition that 
$N$ has a constant curvature is
exactly needed to
get a perfect fluid.

\subsection{Static universes.}
\label{static}
Not only expanding universes involve a
warped product structure, but also
the static ones, which are   defined as
those spacetimes having non-singular timelike Killing fields
with an integrable orthogonal distribution. The 
fact that this gives a local warped  structure with 
the trajectories of the given Killing
field as a normal foliation, is a special elementary 
case of Theorem \ref{action.criterion.1}

Conversely, by the  isometric extension
 Fact   \ref{isometry.extension},  
a warped product $M = L \times_wN$, with $N$ 
locally 
isometric to $({\Bbb R}, -dt^2)$ (essentially 
$N$ is an interval endowed with a negative metric) 
is static. 
Note however that a local warped product 
with a normal factor
locally 
isometric to $({\Bbb R}, -dt^2)$ is not necessarily
static, since there is an ambiguity in defining a global 
Killing field  as desired. The natural notion 
that can  be considered here is that of a locally static spacetime, which 
  will  thus be equivalent to having a local warped product structure 
with a normal factor 
locally isometric to $({\Bbb R}, -dt^2)$.

\subsubsection{A naive gravitational model.}
Consider the warped product $M = ({\Bbb R}^3,$ Euclidean)
 $\times_r ({\Bbb R},-dt^2)$,
where
$w= r: {\Bbb R}^3 \to{\Bbb  R}$
is the radius function. 
 (The warped product metric is non-degenerate only for $r \neq 0$, so
more exactly,  $M$ equals $({\Bbb R}^3-\{0\}) \times _r {\Bbb R}$).

From Theorem \ref{Maupertuis}, the projection of the geodesics of $M$ 
are the trajectories of the mechanical systems 
 on the Euclidean space ${\Bbb R}^3$, with 
  potentials of the form
$V= c/r$, where $c $
is a non-positive constant.
By this, one obtains in particular the trajectories of 
the Newtonian 
potential
$V=- 1/r$.  
 
In fact,  this process gives a (naive) relativistic static 
model
$ L \times_w ({\Bbb R},-dt^2)$ associated to
any
negative  potential $V = -1/w: L \to {\Bbb R}^-$
 on a Riemannian manifold $L$.

One flaw of such a  model is that it is   not characteristic of 
the 
initial potential $V$,  since it cannot distinguish between the potentials 
$cV$ for 
different (non-negative) constants $c$, and it
recovers in particular the geometry of $L$, 
for $c =0$. In fact,  except for exceptional cases, 
two 
warped products $L \times_w {\Bbb R}$ and $L \times_{cw} {\Bbb R}$ are 
isometric by means of the {\it unique}  mapping
$(x,t ) \to (x, ct)$, which acts  as a time dilation.
Therefore, the model would be specific 
of the potential if one introduces an extra structure  breaking  
time dilations.

It seems interesting to investigate some features of
these spaces, especially from the viewpoint of being perfect fluids.

``Newtonian spacetimes'' (see for instance \cite{Mis}, \S 12) were introduced by 
E. Cartan for the goal of making geodesic the dynamics under a mechanical system
derived from a potential. The structure there is that  
of an affine connection, which is poor, compared to the Lorentz
structure here. 

We think it is worthwhile  investigating a synthesis of  all the approaches 
to geodesibility processes of dynamical systems.

\subsection{Polar coordinates.} 
\label{polar.2}

The polar coordinates at 0 endow  the Minkowski space
$ ({\Bbb R}^{n,1}, \langle ,\rangle )$ with a warped product structure defined 
away from
 the light cone $\{x/ \langle x,x\rangle  =0\}$. Inside the cone, the structure 
is physical, with a normal factor homothetic to
the hyperbolic space ${\Bbb H}^n$, and outside the cone, the structure 
is anti-physical, with a normal factor homothetic to the de Sitter space 
$\{x / \langle x,x\rangle  = +1\}$.

\subsection{Spaces of constant curvature.}

(See for instance \cite{Wol} for 
some facts  on  this subject). The spaces of constant curvature are 
 already ``simple'', 
but one may need for some calculations to write them 
as (non-trivial) warped products, for instance polar coordinates 
on these spaces give rise to warped product structures defined 
on some open sets.

 Recall that 
 for these spaces, umbilic submanifolds (with dimension $\geq 2$) are
spherical, and also have  constant curvature. In particular,  
a warped product structure in this case is a GRW structure. 
(In dimension 4, and 
if the normal factor is 
spacelike,  one obtains 
  a classical Robertson-Walker structure, \S  \ref{dynamic}.
 The perfect fluid has in this case
constant density and pressure).

One can  prove the following fact which
 classifies the warped products  
in this 
setting.
(See \cite{San} for a study  of global  warped products
of physical type). 

\begin{fact} Let $N$ be an umbilic (non-degenerate) submanifold in a
  space of constant curvature $X$. Consider 
the foliation ${\mathcal L}$, defined on a neighborhood
${\mathcal O}(N)$  of $N$,
having as  leaves the 
 geodesic submanifolds orthogonal to $N$.

Then, the orthogonal distribution of ${\mathcal L}$ is integrable, 
say it is tangent to a foliation ${\mathcal N}$. 
Moreover, $({\mathcal L}, {\mathcal N})$ 
determines a GRW structure.

Furthermore,  ${\mathcal N}$ is the orbit foliation of the isometric 
action of 
  a natural subgroup  $T(N)$ of 
 Isom$(X)$ preserving $N$. In the case where $N$ is a geodesic submanifold, 
$T(N)$ is the group generated by the transvections along the geodesics of $N$.
(A transvection along a geodesic is an isometry 
which induces parallel translation along it).

\end{fact}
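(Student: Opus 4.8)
The plan is to produce the two foliations explicitly, to identify the orthogonal distribution of $\mathcal{L}$ with an orbit foliation, and then to read off the warped (indeed GRW) structure from the characterization Theorem \ref{characterization}, which only asks that $\mathcal{L}$ be geodesic, that $\mathcal{N}$ be spherical, and that the two be orthogonal. The foliation $\mathcal{L}$ is already geodesic by its very definition (its leaves $L_p=\exp_p(\nu_p N)$ are totally geodesic submanifolds of $X$), and the normal exponential map of $N$ shows that these leaves fit together into a genuine foliation of a tubular neighborhood $\mathcal{O}(N)$. So the whole difficulty is concentrated in $T\mathcal{L}^\perp$: I must show it is integrable, that its leaves are spherical of constant curvature, and that they are the orbits announced in the statement. (I deliberately invoke Theorem \ref{characterization} and not the orbit criterion Theorem \ref{action.criterion.1}, whose absolute-irreducibility hypothesis can fail here, e.g. for an affine subspace of flat space, where $T(N)$ is merely a translation group with trivial isotropy.)

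First I would introduce the group. Since $N$ is umbilic, hence (in constant curvature) spherical, it is a homogeneous submanifold: it is classical that the subgroup of $\mathrm{Isom}(X)$ preserving $N$ restricts onto the transvection group of $N$ viewed as a constant-curvature space, and I take $T(N)$ to be this transvection group realized inside $\mathrm{Isom}(X)$; in the geodesic case these are exactly the transvections along geodesics of $N$. Because $N$ is spherical its normal connection is flat, and this forces the isotropy $T(N)_p$ to act trivially on $\nu_p N$. Consequently $T(N)$ preserves the normal bundle and commutes with the normal exponential map, so it permutes the leaves of $\mathcal{L}$ and its orbits in $\mathcal{O}(N)$ are the parallel submanifolds of $N$, all of constant dimension $\dim N$ and transverse to $\mathcal{L}$ (transversality holds along $N$, where $T_pN\cap T_pL_p=T_pN\cap\nu_p N=0$, hence on a smaller neighborhood). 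This exhibits the candidate foliation $\mathcal{N}$ and, being an orbit foliation of constant rank, settles its integrability for free.

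The heart of the proof is orthogonality, i.e. $T_q\mathcal{N}_q=(T_qL_p)^\perp$ for every $q\in L_p$. Here I would use the reflection $\rho=\rho_{L_p}$ across the totally geodesic leaf $L_p$ (the isometry of $X$ fixing $L_p$ pointwise and acting by $-\mathrm{id}$ on $\nu L_p$, which exists in any constant-curvature space). At $p$ one has $T_pL_p=\nu_p N$, so $d\rho_p$ fixes $\nu_p N$ and reverses $T_pN$; in particular it fixes the shape vector $n_p$, so $\rho(N)$ has the same $1$-jet at $p$ as $N$. Since umbilic submanifolds of a constant-curvature space are rigid — they are intersections of hyperquadrics with totally geodesic subspaces, determined by their $1$-jet at one point — we get $\rho(N)=N$; thus $\rho$ preserves $N$, normalizes the intrinsically defined $T(N)$, and permutes the orbits $\mathcal{N}$. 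Fixing $q\in L_p$, the isometry $\rho$ preserves $T_q\mathcal{N}_q$, which therefore splits into the $\pm1$-eigenspaces of $d\rho_q$, namely $T_qL_p$ and $\nu_q L_p$. By transversality the $+1$-part $T_q\mathcal{N}_q\cap T_qL_p$ vanishes, so $T_q\mathcal{N}_q\subseteq\nu_q L_p$, and equality of dimensions forces $T_q\mathcal{N}_q=\nu_q L_p=(T_qL_p)^\perp$. This proves simultaneously that $T\mathcal{L}^\perp$ is integrable and that $\mathcal{N}=\mathcal{L}^\perp$.

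It then remains to run the characterization theorem and to check the curvature. Each leaf of $\mathcal{N}$ is a parallel submanifold of the umbilic $N$ inside a constant-curvature space, hence itself umbilic, hence — by the fact recalled just above the statement — spherical and of constant curvature. With $\mathcal{L}$ geodesic, $\mathcal{N}$ spherical, and the two orthogonal, Theorem \ref{characterization} yields a local warped product structure with $\mathcal{N}$ as normal factor, and the constancy of the curvature of the $\mathcal{N}$-leaves upgrades this to a local GRW structure. Finally $\mathcal{N}$ is by construction the orbit foliation of $T(N)$, which in the geodesic case is the transvection group described in the statement. I expect the genuine obstacle to be the orthogonality step: controlling $T\mathcal{L}^\perp$ away from $N$, where nothing is orthogonal a priori, for which the rigidity of umbilic submanifolds and the reflection $\rho_{L_p}$ are the essential inputs; the constant-rank and transversality bookkeeping for the orbits is the secondary technical nuisance.
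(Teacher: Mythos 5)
The paper states this Fact without proof (``One can prove the following fact\dots''), so there is no official argument to compare yours against; it has to stand on its own. Its architecture is sound and, I believe, the right one: $\mathcal{L}$ is geodesic by construction, the reflection $\rho_{L_p}$ across a leaf is the correct device for proving orthogonality and integrability of $T\mathcal{L}^\perp$ in one stroke, and invoking Theorem \ref{characterization} rather than Theorem \ref{action.criterion.1} is well judged, since the absolute-irreducibility hypothesis of the latter does fail when the isotropy of $T(N)$ is small. The checks of sphericity and constant curvature of the parallel submanifolds, and the passage to a GRW structure, are also fine.

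The genuine gap is in the construction of $T(N)$ itself --- which is precisely the ``natural subgroup'' the statement asks for. ``The transvection group of $N$ realized inside $\mathrm{Isom}(X)$'' is ambiguous: the restriction homomorphism $r$ from the subgroup of $\mathrm{Isom}(X)$ preserving $N$ to $\mathrm{Isom}(N)$ has nontrivial kernel (isometries fixing $N$ pointwise), and a realization is a choice of section of $r$ over the transvection group, of which there may be several with genuinely different orbit foliations. Concretely, for $N$ a flat affine $2$-plane in Euclidean $\mathbb{R}^4$ (the geodesic case singled out in the statement), besides the translation group one has screw-motion sections $v \mapsto (\text{translate by } v)\circ(\text{rotate } \nu N \text{ by angle } \langle a,v\rangle)$: these preserve $N$ and restrict isomorphically onto its transvection group, yet their orbits off $N$ spiral and are neither the parallels of $N$ nor orthogonal to $\mathcal{L}$. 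Your justification --- ``flat normal connection forces the isotropy $T(N)_p$ to act trivially on $\nu_pN$'' --- does not exclude this: the screw group has trivial isotropy, so the condition is vacuous there, and flatness of the normal connection is a property of $N$, not of an arbitrary lift of its transvection group. What is needed is to build the specific section: for each transvection $\tau$ of $N$, let $g_\tau$ be the unique isometry of $X$ with $g_\tau(p)=\tau(p)$, $dg_\tau|_{T_pN}=d\tau_p$, and $dg_\tau|_{\nu_pN}$ equal to normal parallel transport along a path in $N$ (path-independent exactly because the normal connection is flat); then rigidity of umbilic submanifolds together with the fact that the shape vector field is normal-parallel --- this is where sphericity, not mere umbilicity, of $N$ enters --- gives $g_\tau(N)=N$, $g_\tau|_N=\tau$, that $\tau\mapsto g_\tau$ is a homomorphism, and that its orbits are the parallel submanifolds. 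With $T(N)$ so defined, your reflection does normalize it (conjugation by an isometry preserving $N$ preserves the parallel-transport property), and the rest of your proof --- the $\pm1$-eigenspace splitting giving $T_q\mathcal{N}_q=\nu_qL_p$, then Theorem \ref{characterization} --- goes through as written.
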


\subsection{Schwarzschild spacetime.}

The building
of Schwarzschild spacetime gives an excellent example 
of how 
various warped product structures
may be  involved. We will  essentially study it from this
point of view.
This spacetime models  a relativistic
 one body  universe (a star). Its 
construction is accomplished by translating the physical content into geometrical
 structures, and making at each stage ``necessary'' 
topological simplifying assumptions.

The spatial isotropy  around
the star leads to the first geometric structure, 
 formulated by the fact that $\SO(3)$ acts 
isometrically 
with 2-dimensional 
orbits.  From Fact \ref{dimension4}, 
we get a local warped product  of the type
$L \times_w S^2$ (one excludes the 
${\Bbb R}P^2$-case by an appropriate orientability extra  hypothesis).
One then makes  
the topological simplifying  hypothesis that  
the warped product is
 global.
 
This warped product (in particular the function $w$)
is canonical (it has a  physical 
meaning) and is in particular compatible with 
the additional structures.

The second geometrical hypothesis
on the spacetime is 
that it  is static
 (which in fact leads to another 
local warped product 
structure with a normal factor locally 
isometric to
$({\Bbb R}, -dt^2)$).

The compatibility between 
structures, 
  implies,  essentially, that  
the surface $L$ itself is static. Thus 
(after topological simplification)
$L$ is   a warped product 
$({\Bbb R}, g) \times_v ({\Bbb R}, -dt^2)$. 
(where $g$ is   some metric on ${\Bbb R}$).

By compatibility, the warping function $w$
is invariant by the Killing timelike field 
$\frac{\partial} {\partial t}$
on $L$. Its gradient is thus tangent to 
the first factor ${\Bbb R}$ of $L$.  
Another topological simplification 
consists in 
assuming    
that $w$ is regular, 
namely,  $r= \sqrt{w}$
is a global coordinate function on ${\Bbb R}$ (the first factor of  $L$).
We write the metric on this factor as $g = g(r)dr^2$
($g$ is now a function on ${\Bbb R}$).

The metric on the spacetime has thus  the form 
$g(r)dr^2 - v(r)dt^2 + r^2 d\sigma^2$
($d\sigma^2$ is the canonical
metric 
on $S^2$).

The third geometrical hypothesis is 
that the spacetime is 
empty (a vacuum), i.e. Ricci flat, leading 
to 
differential relations on 
the functions $g$ and $v$.
They imply that 
$g=  \frac{1}{1-(2m/r)}$, 
and
that $v g$ equals a constant (here one has to perform some computation).
This last constant must equal to  1, by the 
fourth geometrical hypothesis saying that 
the spacetime is 
asymptotically Minkowskian.

We have therefore, 
$L = ] 2m, +\infty [ \times {\Bbb R}$, 
endowed with the metric: $$ \frac{1}{1-(2m/r)} dr^2 -(1-(2m/r))dt^2$$
The warped product $L\times_{r^2}S^2$
is called the {\it Schwarzschild exterior} spacetime.

  It is natural to ask if   other solutions exist 
without our topological simplification hypotheses. This is
essentially equivalent to ask if the spacetime 
$L \times_{r^2} S^2$ admits non-trivial extensions. 
One easily sees that no such  {\it static}
extensions exist. 
However   non-trivial analytic (and thus Ricci flat)
 extensions actually exist. They (essentially) correspond  
to analytic  extensions of the Lorentz surface 
$L$.

The obvious one is given by
adding $L^-= 
 ]0, 2m[   \times {\Bbb R}$, endowed with 
the metric defined by the same formula. The warped 
product $L^-\times_{r^2} S^2$ is
called the {\it Schwarzschild black hole}.

It has been 
   observed (firstly by Lema\^itre, see for instance \cite{Mis}) 
 that the metric on $L \cup L^-$ admits an 
analytic extension to all
$]0, +\infty[ \times {\Bbb R}$.

Next, a larger extension $\hat{L}$ , which turns out to be
 ``maximal'', 
 was discovered by Kruskal. It can be described, 
 at a 
``topological level'' as follows. 
Endow ${\Bbb R}^2$ with coordinates $(x, y)$ and a 
Lorentz scalar
 product (at 0) $dx dy$. Then, $\hat{L}$
is the part of ${\Bbb R}^2$
defined by an inequality $xy > c(m)$, where 
$c(m)$ is a negative constant. The metric 
has the form $F(xy)dxdy$, where $F: ]c(m), +\infty[ \to {\Bbb R}$ is
an  analytic real function which tends to $\infty$ at $c(m)$.
(It turns out that a coordinate system where the metric has this 
form is unique
up to a linear diagonal transformation.)

From the form of the metric, 
the 
flow   $\phi^s(x, y) = (e^sx,e^{-s}y)$ 
acts isometrically on $\hat{L}$. This corresponds to the analytic
extension 
of the  Killing field $\frac{\partial} {\partial t}$
defined on $L$.

The time function
  $t$ on $L$ has the form $t(x, y)
= a\ln \frac{x}{y}$, where $a$ 
is a constant (which depends on the coordinate system).
 
The radius function $r$  looks like 
a Lorentz radius, indeed it has the form,
$r(x,y) = b(xy)+2m$,
for some analytic function $b: [c(m), +\infty[ \to [-2m, + \infty[ $,
with $b(0)= 0$. (A natural Lorentz radius 
for $({\Bbb R}^2, dxdy)$ is
$\sqrt { \vert  x y \vert }$).

Our initial surface $L$ is identified with the positive
quadrant 
$x, y >0$.

The warped product structure (determined by 
the flow $\phi^s$ on $\hat{L}- \{xy=0\}$) 
is physical on $xy>0$, and 
anti-physical
on $xy<0$. In fact, this structure 
is conformal to that determined by the polar 
coordinates on 
$({\Bbb R}^2, dxdy)$ (\S \S \ref{polar.1}, and \ref{polar.2}).

\subsubsection{Geodesic foliations.}

The factor $\hat{L}$ determines a geodesic foliation of the Kruskal 
spacetime $\hat{L}\times_{r^2} S^2$. 

The static 
structure (on 
$L \times_{r^2}S^2$) determines a geodesic foliation ${\mathcal F}$ with leaves 
$t= $ constant, or equivalently   $\frac{x}{y} =$ constant. 
Thus a  leaf   has
the form: $F = R \times S^2$, where
$R \subset \hat{L}$ is   
 a ray emanating from 0.

This foliation extends to 
  $(\hat{L}-0) \times S^2$ (and to the whole Kruskal spacetime 
$\hat{L} \times_{r^2} S^2$, as a singular 
geodesic foliation).

The causal character of a leaf $F$ is the same as that of the ray $R$.
 In particular,
lightlike leaves correspond to lightlike rays, i.e. the coordinate axis.

\subsubsection{Geodesics.}
To determine all the geodesics of 
 $L \times_{r^2}S^2$, one uses Theorem \ref{Maupertuis}
 which reduces the problem
to the calculation of the trajectories of
mechanical systems on the surface $L$ defined 
by the potentials $\frac{c}{r^2}$.

Now, since $L$ itself is a warped product, one applies 
Corollary \ref{mechanics.surface} to solve mechanical systems 
with potentials $\frac{c}{r^2}$
over it. This reduces to use the energy conservation, and 
solve the mechanical systems with potentials 
$ c_1\frac{1}{1-(2m/r)} + c_2\frac{1}{r^2} $
 on  
$({\Bbb R}, \frac{1}{1-(2m/r)}dt^2)$.

Proposition \ref{mechanics}
applies to these potentials
(considering  $L$ as a warped product),
 which allows one to 
fully explicit the geodesics.

\subsection{Motivations for anti-physical  warped products.}  

We think there is no reason to be troubled 
by anti-physical warped products. The 
adjective anti-physical   must not suggest that they are 
``non physical'', but  rather that they are ``mirror transform'' 
of physical ones (to be found?). This  clearly
happens in the case of polar coordinates in the Minkowski
space, where one sees how the anti-physical part of the 
GRW structure is dual to the physical one (\S \ref{polar.2}). A 
similar 
 duality holds between the interior and the exterior 
of the Schwarzshild spacetime. The exterior is  
  static, by  the existence of   a timelike 
Killing field, which 
becomes spacelike in the interior. The interior of a black 
hole is anti-physical.

Let us enumerate further (physical) motivations of anti-physical
warped products:

$\bullet$ With respect to the goal of constructing 
simple exact solutions, 
the calculations are formally 
the same, in the physical as well as in the  the anti-physical cases.  
So, one may calculate, and forget that it is an  anti-physical warped
product!


$\bullet$ As was said before, the abundance of symmetries 
leads to a warped product structure, but  actually,  large  
 symmetry groups involve
 anti-physical warped products.
For example, non-proper isometry groups lead to an anti-physical 
warped product (see for instance \cite{Ze.gafa}). Roughly speaking, non-proper 
means that 
the stabilizers are non-compact.  
Let us however say that only 
few exact solutions have non-proper isometry groups. 
It seems that this is the case, only for spaces of constant curvature and some 
gravitational plane waves.

$\bullet$ Finally,  it seems 
interesting to formulate
a  complexification trick which exchanges 
 anti-physical by   physical structures.  The very naive idea starts by 
 considering 
a Riemannian analytic submanifold $V$ in the  Euclidean space ${\Bbb R}^N$,
 taking its ``complexification''  $V^{\Bbb C}$ 
and then inducing  on  it the holomorphic metric 
of  ${\Bbb C}^N$, which as a real metric is pseudo-Riemannian. 
(The complexification is defined only locally but one may approximate 
by algebraic objects 
in order to get a global thing, see for instance \cite{HHL} for related 
questions).

\section{Big-bangs in anti-physical warped products}

Consider the example of polar coordinates around 0
in the Minkowski space ${\Bbb R}^{n, 1}$ (\S \ref{polar.2}).  When  an interior  point 
 approaches  
the light cone (and especially 0), the warping function collapses, 
and the warped product structure disappears. 
However, the spacetime itself  persists, beyond this 
``false big-bang''. It seems  interesting to know situations 
where a ``true big bang'' (i.e. a disappearing of the spacetime)
must follow from a disappearing of the warped product structure. The results
below provide an example of such a situation, but let us before
try to give a more precise definition.

\begin{definition} Let $({\mathcal L}, {\mathcal N})$ be a warped product structure  
on a pseudo-Riemannian manifold $M$. We say that it has an 
{\bf inessential big-bang} if there is an isometric embedding of $M$
in another pseudo-Riemannian manifold
$M^\prime$, as an open {\bf proper} subset, such that 
the shape vector field $\overrightarrow{n}$  of ${\mathcal N}$, is
non-bounded in some compact subset of $M^\prime$.

\end{definition}

In other words, we see $M$ as an open subset of $M^\prime$, then, an 
inessential big-bang holds if there is a compact $K $ in
$M^\prime$ such that  $\overrightarrow{n}$ is not bounded on
$K \cap M$. (Observe that one may speak of  bounded vector fields on compact sets
without any reference to metrics). We have the following result.

\begin{theorem} An analytic anti-physical  GRW structure 
with non-positively curved normal factor, has 
no inessential big-bangs.

\end{theorem}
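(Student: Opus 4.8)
The plan is to read the size of the shape vector field off the constant curvature of the normal factor through the Gauss equation, and then to observe that in the anti-physical, non-positively curved case this size is pinned between $0$ and a multiple of the ambient curvature, which stays bounded on any compact subset of $M'$. First I would fix signatures. Since the structure is anti-physical and the constant curvature sits on the normal factor, $\mathcal N$ is the Lorentzian factor; for $M$ to remain Lorentzian the base $\mathcal L$ is then Riemannian. By Fact \ref{shape.vector} the shape vector field is $\overrightarrow{n}=-\tfrac12\,\nabla w/w$, tangent to $\mathcal L$, hence \emph{spacelike}: $\langle\overrightarrow{n},\overrightarrow{n}\rangle\ge 0$, with equality only where $\nabla w=0$.

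The core step is the Gauss equation for the umbilic leaves of $\mathcal N$. A leaf, carrying the induced metric $w g$, has constant sectional curvature $\kappa/w$, where $\kappa\le 0$ is the curvature of $(N,g)$; the umbilic relation $II=\langle,\rangle\,\overrightarrow{n}$ then gives, for every nondegenerate $2$-plane tangent to the leaf (there are such planes as $\dim N\ge 2$),
$$
\frac{\kappa}{w}=K^{M}+\langle\overrightarrow{n},\overrightarrow{n}\rangle ,
$$
where $K^{M}$ is the ambient sectional curvature of that plane. The $+$ sign is fixed by the dual physical picture, the Milne writing $(\R,-d\tau^{2})\times_{\tau^{2}}\H^{n}$ of the interior of the light cone, where $\kappa=-1$, $K^{M}=0$ and a direct computation gives $\langle\overrightarrow{n},\overrightarrow{n}\rangle=-1/\tau^{2}$. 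Feeding in $\kappa\le 0$ and $\langle\overrightarrow{n},\overrightarrow{n}\rangle\ge 0$ yields the one–sided estimate
$$
0\ \le\ \langle\overrightarrow{n},\overrightarrow{n}\rangle\ =\ \frac{\kappa}{w}-K^{M}\ \le\ -K^{M}.
$$

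Now I would argue by contradiction. Suppose there is an inessential big-bang: $M$ sits in $M'$ as an open proper subset and $\overrightarrow{n}$ is unbounded on $K\cap M$ for some compact $K\subset M'$. As $M'$ is analytic, hence smooth, its sectional curvature is bounded on $K$, so $-K^{M}$ is bounded above there; by the displayed inequality $\langle\overrightarrow{n},\overrightarrow{n}\rangle$ is then bounded on $K\cap M$, i.e. the second fundamental form of $\mathcal N$ has bounded length up to the boundary. The case $\kappa<0$ is already sharp: $\kappa/w\ge K^{M}\ge -C$ on $K$ forces $w\ge|\kappa|/C>0$, so the warping function cannot even collapse, which is the cleanest way to see that no big-bang forms.

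The hard part will be upgrading this metric bound to a genuine (coordinate) bound on $\overrightarrow{n}$, closing the gap in the remaining case $\kappa=0$: a priori a spacelike vector of bounded length can still run off to infinity by approaching the null cone of $M'$. Here I would exploit the rigidity furnished by Theorem \ref{characterization}, namely that the orthogonal foliation $\mathcal L$ is totally geodesic and, being Riemannian, uniformly spacelike inside $M$. A totally geodesic foliation whose orthogonal second fundamental form has bounded length extends analytically across the boundary point, so $T\mathcal L$ admits a continuous spacelike limit there; along such a limit bounded length does force a bounded vector, and since $\overrightarrow{n}\in T\mathcal L$ it extends boundedly to $K$, contradicting the assumed unboundedness. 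Thus the analyticity hypothesis enters exactly to turn the curvature bound into an extension of the structure, and the non-positivity of $\kappa$ enters exactly to make the spacelike vector $\overrightarrow{n}$ short rather than long.
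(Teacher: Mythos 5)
Your Gauss--equation computation is correct as far as it goes: for an umbilic leaf of $\mathcal N$ one indeed has $\kappa/w=K^{M}(P)+\langle\overrightarrow{n},\overrightarrow{n}\rangle$ for every nondegenerate tangent $2$-plane $P$, anti-physicality does make $\overrightarrow{n}$ spacelike, and $\kappa\le 0$ does yield $\langle\overrightarrow{n},\overrightarrow{n}\rangle\le-K^{M}(P)$; your reading of where each hypothesis enters is exactly right. But the first analytic step already fails: ``$M'$ is smooth, hence its sectional curvature is bounded on $K$'' is false in Lorentzian signature. Only the curvature \emph{tensor} is bounded on compacta; the sectional curvature $\langle R(u,v)v,u\rangle/(\langle u,u\rangle\langle v,v\rangle-\langle u,v\rangle^{2})$ blows up as the plane degenerates (by Kulkarni's theorem, a Lorentzian metric whose sectional curvatures are bounded at a point has constant curvature there). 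The planes you must feed into the Gauss equation are tangent to the leaves of $\mathcal N$, and these are precisely the planes that can degenerate at a big-bang -- the very scenario to be excluded. When $\dim N\ge 3$ this can be repaired by using \emph{spacelike} planes tangent to the leaves (a limit of Lorentzian subspaces of a Lorentz space is Lorentzian or degenerate with $1$-dimensional kernel, so uniformly spacelike $2$-planes survive in the limit), but as written the step is wrong, and for a $2$-dimensional flat normal factor the repair is unavailable.

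The decisive gap is the one you yourself flag as ``the hard part'': passing from a bound on $\langle\overrightarrow{n},\overrightarrow{n}\rangle$ to a bound on $\overrightarrow{n}$. The tool you invoke -- ``a totally geodesic foliation whose orthogonal second fundamental form has bounded length extends analytically across the boundary, with a spacelike limit'' -- is not a theorem; it is essentially the statement being proved. The paper explicitly warns that somewhere-defined foliations admit no analytic continuation in general, and nothing prevents, a priori, $T\mathcal L$ and $T\mathcal N$ from tilting toward a common null direction as one approaches the boundary, so that $\overrightarrow{n}$ stays spacelike of bounded length yet escapes to infinity along the null cone; asserting a spacelike limit of $T\mathcal L$ begs exactly this question. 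Ruling out that degeneration is where all the work of the paper lives, and where analyticity genuinely enters: one trivially extends the Killing algebra $\mathcal G_c^{d+1}$ of the constant-curvature normal factor to the warped neighborhood, continues each Killing field to all of the ambient manifold by Nomizu's theorem \cite{Nom}, and then shows the orbit foliation of this infinitesimal action can have no degenerate orbit -- in the anti--de Sitter case because $o(2,d)$ is spanned by lightlike Killing fields, which would all be tangent to the characteristic foliation of a degenerate orbit, forcing a $1$-dimensional homogeneous space of $o(2,d)$; in the flat case because the Riemannian quotient of a degenerate orbit would be hyperbolic while nearby orbits have Euclidean quotients, contradicting analyticity. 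Your proposal uses analyticity only inside the unjustified extension step; since Nomizu-type continuation is precisely where analyticity is indispensable (the statement is hopeless for merely smooth metrics along your lines), a proof that never genuinely invokes it there cannot be correct as it stands.
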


 Let us give a purely mathematical essentially equivalent statement.

\begin{theorem} (\cite{Ze.gafa})
\label{big.bang.math}
Let $M$  be an  analytic Lorentz  manifold such that 
some open subset $U$ of $M$ is 
isometric
to a warped product $L \times_w N$,  where $N$ 
 (is Lorentzian and) has a constant non-positive  curvature
(i.e. $N$ is locally isometric to 
 the  Minkowski or the  anti de Sitter spaces).  
Then,  every point of $M$ has a neighborhood 
  isometric to 
a warped product of the same type. 
More precisely, if $M$ is simply connected, then the warped product structure 
 on $U$ extends to a local warped product (of the same type) on $M$.
\end{theorem}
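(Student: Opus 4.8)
The plan is to make the warped product structure dynamical, extend it by analytic continuation of Killing fields, and then forbid the only possible degeneration by a curvature estimate in which the non-positive curvature hypothesis is decisive. First I would replace the structure on $U$ by the local isometric action it carries: since $N$ has constant curvature, its local isometry group $G$ is of maximal dimension, and by the trivial isometric extension Fact~\ref{isometry.extension} the Killing fields of $N$ lift to Killing fields of $U=L\times_w N$ whose orbits are the leaves $\{x\}\times N$ of $\mathcal{N}$. At each point the isotropy is the full pseudo-orthogonal group of the model, acting absolutely irreducibly on $T\mathcal{N}$; moreover the orbit metric is Lorentzian while the orthogonal metric (that of the Riemannian factor $L$) is definite. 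Thus the hypotheses of Theorem~\ref{action.criterion.2} hold on $U$, and conversely that theorem lets one reconstruct the warped product, with $\mathcal{N}$ the orbit foliation, from the action alone --- in particular the integrability of $T\mathcal{N}^\perp$ is automatic.

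Next I would extend the action. Each generator of $\mathfrak{g}=\mathrm{Lie}(G)$ is a local Killing field of the analytic Lorentz metric, hence extends along every path by the classical analytic continuation principle for Killing fields, the extension being single valued when $M$ is simply connected. This yields a Lie algebra $\mathfrak{g}$ of Killing fields on $M$, isomorphic to the isometry algebra of the model normal factor and restricting to the original one on $U$. Let $\Omega\subseteq M$ be the open set of points where the $\mathfrak{g}$-orbit is non-degenerate of dimension $k=\dim N$; it contains $U$, and by continuity the signature conditions persist there, so Theorem~\ref{action.criterion.2} endows $\Omega$ with a local warped product structure of the required type. The whole matter reduces to showing $\Omega=M$.

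The core of the argument, and the place where I expect the genuine difficulty, is to rule out degeneration at a boundary point $p\in\partial\Omega$. For this I would use the Gauss equation for the umbilic foliation $\mathcal{N}$, whose shape vector is $\overrightarrow{n}=-\tfrac12\,\nabla w/w$ by Fact~\ref{shape.vector}. Writing $\kappa\le 0$ for the constant curvature of $N$, one computes the ambient sectional curvature on $2$-planes tangent to $\mathcal{N}$ to be
$$\lambda=\frac{\kappa}{w}-\langle\overrightarrow{n},\overrightarrow{n}\rangle,\qquad \langle\overrightarrow{n},\overrightarrow{n}\rangle=\tfrac14\,|\nabla w|^2/w^2\ge 0,$$
the sign of the second term being forced because $\overrightarrow{n}$ lies in the Riemannian factor $T\mathcal{L}$. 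Since $M$ is analytic, $\lambda$ is a bounded function near $p$; but $\kappa\le 0$ makes both terms on the right non-positive, so no cancellation between them is possible, and boundedness of $\lambda$ forces $w$ to stay away from $0$ (and $\langle\overrightarrow{n},\overrightarrow{n}\rangle$ to stay bounded) as one approaches $p$. This is exactly the cancellation that does take place, producing a false big-bang, in the de Sitter polar-coordinate model where $\kappa>0$; non-positive curvature is precisely what forbids it. Hence the orbit metric $wg$ does not degenerate and the shape vector remains bounded: there is no inessential big-bang.

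Finally I would upgrade this to the extension of the structure across $p$. Boundedness of $w$ away from $0$ keeps the limiting orbit non-degenerate and Lorentzian; to exclude a drop of orbit dimension I would invoke the absolute irreducibility of the isotropy, which on the non-degenerate limiting orbit space leaves no invariant subspace to carry the extra infinitesimal isotropy that a dimension drop would create. Thus the orbit through $p$ is non-degenerate of dimension $k$, i.e.\ $p\in\Omega$, contradicting $p\in\partial\Omega$; therefore $\Omega=M$ and every point has a warped product neighborhood of the stated type. When $M$ is simply connected the single orbit foliation $\mathcal{N}$ and the single-valued Killing algebra glue these neighborhoods into one local warped product structure on all of $M$. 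The main obstacles are the sign analysis of the curvature identity (the heart of the non-positive curvature hypothesis) and the exclusion of dimension drop via absolute irreducibility; the remaining steps are the routine translations already supplied by Theorems~\ref{action.criterion.1}, \ref{action.criterion.2} and Fact~\ref{shape.vector}.
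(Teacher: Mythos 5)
Your first two steps coincide with the paper's: you trade the warped product on $U$ for the infinitesimal isometric action of the model Killing algebra ${\mathcal G}_c^{d+1}$ via Fact \ref{isometry.extension}, you extend it to all of (simply connected) $M$ by Nomizu's analytic continuation theorem, and you reduce the problem to showing that no orbit of the extended action degenerates; on the open set of non-degenerate maximal orbits, Theorem \ref{action.criterion.2} (together with Fact \ref{maximal.dimension}) then reproduces the warped product. The genuine gap is in your core step, the exclusion of degeneration at $p\in\partial\Omega$. You assert that ``since $M$ is analytic, $\lambda$ is a bounded function near $p$,'' where $\lambda$ is the sectional curvature of $2$-planes tangent to the orbits. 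Analyticity bounds the curvature \emph{tensor} on compact sets, but the sectional curvature of a plane spanned by $X,Y$ is $\langle R(X,Y)X,Y\rangle$ divided by the Gram determinant $\langle X,X\rangle\langle Y,Y\rangle-\langle X,Y\rangle^2$, and this denominator tends to $0$ exactly when the plane becomes degenerate --- which is precisely the scenario you must rule out: if the orbit through $p$ is lightlike, the tangent planes of nearby orbits degenerate as they approach $p$, and nothing bounds $\lambda$ there. (In Lorentzian signature this is not a technicality: boundedness of sectional curvature on all non-degenerate planes is known to force constant curvature, so the hypothesis you need is essentially never available.) Your identity $\lambda=\kappa/w-\langle\overrightarrow{n},\overrightarrow{n}\rangle$ and the sign analysis are correct and are a nice heuristic for why $\kappa\le 0$ should matter, but without the boundedness of $\lambda$ they prove nothing; moreover in the flat case $\kappa=0$ the identity gives no control on $w$ at all, and your exclusion of an orbit-dimension drop (``absolute irreducibility leaves no invariant subspace to carry the extra infinitesimal isotropy'') is a slogan, not an argument.

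The paper closes this gap by an entirely different, algebraic mechanism: the theory of \emph{lightlike Killing fields}. A lightlike Killing field is geodesic and singularity-free (Fact \ref{lightlike.geodesic} and the discussion following it), and on a degenerate orbit $N_0$ every lightlike Killing field must be tangent to the one-dimensional characteristic foliation ${\mathcal F}$ (the kernel of the induced metric), so its flow preserves each leaf of ${\mathcal F}$. The sign hypothesis $c\le 0$ enters through the classification of such fields (via Fact \ref{curvature}): the de Sitter space has none (which is why the theorem fails for $c>0$, cf.\ polar coordinates in Minkowski space), the Minkowski algebra contains lightlike translations, and the anti de Sitter algebra $o(2,d)$ is \emph{generated} by its lightlike Killing fields (Fact \ref{rank}). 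In the anti de Sitter case this forces a degenerate orbit to be one-dimensional, whence $o(2,d)$ would embed in the affine algebra of ${\Bbb R}$, absurd. In the flat case, the quotient $Q=N_0/{\mathcal F}$ carries an $o(1,d)$-invariant Riemannian metric and is therefore a hyperbolic $d$-space, while the corresponding quotients of nearby generic orbits are Euclidean; this contradicts the analyticity (indeed continuity) of the leafwise metric. So the non-positive curvature hypothesis is exploited through the structure of the isometry algebra, not through a pointwise curvature estimate; to repair your argument you would need either to justify a curvature bound on degenerating planes (hopeless in general) or to replace that step by an argument of this algebraic kind.
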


Let us give another formulation in the vein of
detecting singularities of a  spacetime
from that of a warped product structure on it.

\begin{corollary}
Let $M$ be a simply connected manifold, and $U$ an open subset
 of $M$ endowed with an analytic Lorentz metric $g$. Suppose that $(U, g)$
 is a warped product as above, and let $x$ be a point in  the boundary of $U$. 
If the warping function $w$ tends to $\infty$ or 0 near $x$, then 
(not only the warped product structure, but also) the Lorentz metric 
$g$ does not extend analytically near $x$.  
\end{corollary}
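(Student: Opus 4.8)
The plan is to argue by contradiction, reducing everything to the extension Theorem \ref{big.bang.math} and to the explicit shape of the force field recorded in Fact \ref{shape.vector}. Suppose $g$ did extend to an analytic Lorentz metric on some neighborhood of $x$. Gluing this extension to $(U,g)$ along their common open piece, I would obtain an analytic Lorentz manifold containing $U$ as an open subset and having $x$ in its interior. Since the assertion is purely local near $x$, I would first replace this ambient manifold by a small simply connected ball $B$ around $x$ (e.g.\ a convex ball for an auxiliary background metric), and select a connected component $U_0$ of $U\cap B$ whose closure contains $x$; by hypothesis $w\to\infty$ or $w\to 0$ along $U_0$ as one approaches $x$. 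On $U_0$ the warped product structure of $U$ restricts to one of the same (anti-physical, constant non-positively curved Lorentzian) type, so Theorem \ref{big.bang.math}, applied to the simply connected $B$, extends this structure --- in particular its normal foliation $\mathcal N$ and the associated shape vector field $\overrightarrow{n}$ --- analytically to all of $B$.

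The second step converts analyticity of $\overrightarrow{n}$ near $x$ into boundedness of $\log w$. By Fact \ref{shape.vector} one has $\overrightarrow{n}=-\tfrac12(\nabla w/w,0)$, and since $w$ is constant along $\mathcal N$ this gives, for every tangent vector $v$, the identity $\langle \overrightarrow{n},v\rangle=-\tfrac12\,dw(v)/w$. Equivalently, the analytic $1$-form $\eta:=-2\langle \overrightarrow{n},\cdot\rangle$ on $B$ agrees on $U_0$ with $d\log w$; note that although $w$ is only defined up to a multiplicative constant, the form $d\log w$ --- like $\nabla w/w$ --- is intrinsic, exactly as remarked after Fact \ref{shape.vector}. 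Now $\eta$ is analytic on the connected set $B$ and closed on the open subset $U_0$, so $d\eta$, being analytic and vanishing on an open set, vanishes on all of $B$. Thus $\eta$ is closed, and since $B$ is simply connected $\eta=d\psi$ for some analytic function $\psi$ on $B$.

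On the connected set $U_0$ we then have $d(\psi-\log w)=0$, hence $\psi=\log w+\mathrm{const}$ there. As $\psi$ is analytic it is bounded on a compact neighborhood of $x$ inside $B$, so $\log w$ stays bounded along the points of $U_0$ tending to $x$. This contradicts the hypothesis that $w\to\infty$ or $w\to 0$ near $x$; therefore $g$ admits no analytic extension across $x$, which is the claim.

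I expect the only genuinely delicate point to be the bookkeeping of the first step: one must legitimately produce a simply connected ambient piece on which the hypotheses of Theorem \ref{big.bang.math} hold and whose extended warped structure restricts on $U_0$ to the original one, so that the extended $\overrightarrow{n}$ is literally $-\tfrac12\,\nabla w/w$ there (this is what licenses the identification $\eta=d\log w$). All of the analytic heavy lifting is hidden inside Theorem \ref{big.bang.math}, i.e.\ in \cite{Ze.gafa}; the remainder is the soft observation that an analytic --- hence bounded --- force field $-\tfrac12\,d\log w$ cannot coexist with $\log w\to\pm\infty$ on a compact neighborhood of $x$.
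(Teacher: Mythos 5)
Your proposal follows exactly the route the paper intends: the corollary is stated as an immediate consequence of Theorem \ref{big.bang.math}, and your way of making that deduction precise (extend the structure to a simply connected ball $B$ around $x$, use Fact \ref{shape.vector} to identify the analytic $1$-form $\eta=-2\langle\overrightarrow{n},\cdot\rangle$ with $d\log w$ on the old piece, propagate closedness of $\eta$ by analyticity, integrate it on the simply connected $B$, and conclude that $\log w$ differs from a function continuous at $x$ by a constant) is the natural one, and the $1$-form argument itself is sound.

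Two technical points remain, one of which you flag and one you do not. The flagged one --- that the structure produced by Theorem \ref{big.bang.math} on $B$ restricts on $U_0$ to the original one --- genuinely needs an argument: $U_0$, being merely a connected component of $U\cap B$, need not itself be a global warped product, so the theorem must be seeded with a product box $U'\subset U_0$, and its conclusion only gives agreement on $U'$. This closes with the tool the paper itself uses in \S\ref{proof.theorem}: on $U_0$ the normal foliations of both structures are spanned by Killing fields of the analytic metric (the trivial extensions of Killing fields of $N$, respectively their Nomizu continuations \cite{Nom}), and Killing fields of an analytic metric that agree on the open set $U'$ agree on all of the connected $U_0$; hence the two shape fields coincide on $U_0$, which is what licenses $\eta=d\log w$ there. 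The unflagged point is your very first selection: a component $U_0$ of $U\cap B$ with $x\in\overline{U_0}$ need not exist --- $U\cap B$ can have infinitely many components accumulating on $x$ with no single one having $x$ in its closure (picture $U$ spiralling into $x$) --- and since the constant relating $\psi$ and $\log w$, and indeed the identity $\eta=d\log w$ itself, are only established component by component, the contradiction as written only rules out blow-up of $w$ along sequences staying in one component. The paper is equally silent on this, and for the tame boundaries in its intended examples it is harmless; but as literally written this is the one step of your argument that can fail, and repairing it in full generality requires exploiting the connectedness of $U$ itself (extending the Killing fields from all of $U$ rather than from a single box) instead of the connectedness of one component.
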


\begin{remark} 
{\em 
The case of 
polar coordinates on the Minkowski space shows that the 
hypotheses  that the GRW structure 
is  anti-physical and the normal
 factor of non-positive curvature are 
necessary.

}
\end{remark}

In the sequel, we will give 
the proof of  Theorem \ref{big.bang.math}, and also details 
on the tools behind it,   especially  about 
 lightlike Killing fields.

\section{Proof of Theorem 
\ref{big.bang.math} }  

\label{proof.theorem}

\subsection{Beginning.}

\subsubsection{Trivial extension.}

Let ${\mathcal E}_c^{d+1}$ denote the simply connected complete Lorentz 
space of constant curvature $c$ (see for instance \cite{Wol} 
for more  details).

In the case $c \neq 0$, we assume that $d>1$, that is,  the dimension 
of the space is $\geq 3$.  In fact, in dimension 2, the sign of the curvature
is irrelevant.

Let $U = L \times_ w N$ be as in the statement of 
Theorem \ref{big.bang.math}. By hypothesis $N$ is  
locally isometric to 
  ${\mathcal E}_c^{d+1}$ for  some $c \leq 0$. We can  restrict $U$ so that 
$N$ becomes identified to an open subset 
of ${\mathcal E}_c^{d+1}$.

By  the trivial 
extension of isometries,  Fact \ref{isometry.extension}, 
 Isom $(N)$ acts on $U$. However, because $N$ is a
``small'' open subset of ${\mathcal E}_c^{d+1}$, Isom$(N)$ 
may be dramatically small, and for this, it is better to consider
 infinitesimal isometries, i.e. Killing vector fields. Indeed, 
like  isometries, Killing vector fields of $N$, trivially extend 
to $U$. Now the Killing algebra of $N$ (i.e. the algebra of
Killing fields) is the same as that of 
${\mathcal E}_c^{d+1}$ which we denote by 
${\mathcal G}_c^{d+1}$. Therefore there is an {\bf infinitesimal action} of 
${\mathcal G}_c^{d+1}$ on $U$, 
 i.e. a homomorphism which  for 
 $ X \in {\mathcal G}_c^{d+1}$  associates an element 
$\bar{X}$ of   the    Killing algebra of $U$.
  
Note that, for our purpose,  only the sign of $c$ 
is relevant, that is we can assume  $c= -1$, whenever 
$c <0$.

Recall that ${\mathcal G}_0^{d+1}$,  the Killing Lie algebra of 
the Minkowski space ${\mathcal E}_0^{d+1}$,  is  isomorphic
to a semi-direct product
  ${\Bbb  R}^{d+1} \rtimes o(1, d)$, 
and that the  Killing Lie algebra of 
the anti de Sitter space ${\mathcal E}_{-1}^{d+1}$ is 
 ${\mathcal G}_{-1}^{d+1} =o(2, d)$

\subsubsection{Analytic extension.}

\label{analytic.extension}

Henceforth, we will assume  that $M$ is simply connected and
 analytic (it suffices just to pass to the universal 
covering).
A classical result \cite{Nom}  states that an analytic Killing field
defined on an open subset extends as a Killing field to the
 whole 
of $M$.

 By individual extension of Killing fields, we get 
an infinitesimal analytic isometric action of 
${\mathcal G}$ on the whole of $M$.

However, this action does not  a priori  determine 
a regular foliation, namely,  the dimension of the orbits 
is not necessarily constant.

Let us first observe that the analyticity implies that
$d+1$, i.e. the dimension of the orbits of the points of $U$,  
is the generic   dimension of orbits, that 
is,  the  dimension is everywhere $\leq d+1$.
Indeed, 
if $X_1, \ldots, X_{d+1} \in {\mathcal G}$, then 
$\bar{X}(x) \wedge \ldots \wedge \bar{X}_{d+2}(x) = 0$ 
for $x \in U$, 
 and hence everywhere (of course, we implicitly 
assume that all our  spaces here are connected).

\begin{proposition}  Let 
${\mathcal G}= {\mathcal G}_c^{d+1}$ act infinitesimally isometrically 
on a Lorentz manifold $M$ (here 
$c$ is not assumed to be $\leq 0$), with a generic
orbit 
 dimension
$\leq d+1$. assume  that all (the restrictions 
of the metrics on) the orbits are 
non-degenerate. In the case $c >0$, assume further 
 that at least one orbit is of Lorentzian
type. Then, the ${\mathcal G}$-action 
determines a regular (i.e. with constant dimension) 
foliation, which is the normal foliation 
of a GRW structure.

\end{proposition}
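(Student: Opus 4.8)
The plan is to reduce the statement to two assertions: first, that the $\mathcal{G}$-action has orbits of \emph{constant} dimension $d+1$, so that $\mathcal{N}$ is a genuine regular foliation; and second, that the resulting pair $(\mathcal{L},\mathcal{N})$ satisfies the hypotheses of Theorem~\ref{action.criterion.2}. Granting the first, each orbit is locally homogeneous under the full Killing algebra $\mathcal{G}_c^{d+1}$ of a constant-curvature space, hence is locally isometric to $\mathcal{E}_c^{d+1}$ and so has constant curvature $c$; the orbit metric is Lorentzian (non-definite) while its orthogonal is definite; and the isotropy acts on $T\mathcal{N}$ by the standard representation of $o(1,d)$, which is absolutely irreducible precisely when $d>1$ (this is exactly why $d>1$ is imposed when $c\neq 0$). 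Theorem~\ref{action.criterion.2} then yields integrability of $T\mathcal{N}^\perp$ and a local warped product with $\mathcal{N}$ as normal factor, i.e. a GRW structure. Thus the whole problem reduces to regularity of the orbit foliation, which is the heart of the matter.

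For constancy of dimension I would argue by a representation-theoretic count exploiting the standing hypothesis that every orbit is non-degenerate. Fix $p$ on an orbit $\mathcal{O}$ of dimension $m$ with $1\le m\le d+1$, put $V=T_p\mathcal{O}$, and let $\mathfrak{h}\subset\mathcal{G}$ be the isotropy subalgebra, so $\dim\mathfrak{h}=\dim\mathcal{G}-m=\binom{d+2}{2}-m$. Since $V$ is non-degenerate, the linear isotropy representation maps $\mathfrak{h}$ into $\so(V)$, of dimension $\binom{m}{2}$. When $\mathcal{G}$ is simple (the cases $c\neq 0$), the kernel of the isotropy representation is the largest ideal of $\mathcal{G}$ contained in $\mathfrak{h}$, hence $0$ for $m\ge 1$; the representation is faithful, so $\binom{d+2}{2}-m\le\binom{m}{2}$, that is $\binom{d+2}{2}\le\binom{m+1}{2}$, forcing $m\ge d+1$. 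With the generic bound $m\le d+1$ this shows every non-degenerate orbit has dimension $0$ or $d+1$. For $c=0$ the algebra $\mathcal{G}=\mathbb{R}^{d+1}\rtimes o(1,d)$ is not simple, but (using that the standard $o(1,d)$-module $\mathbb{R}^{d+1}$ is irreducible for $d>1$) its only ideals are $0$, $\mathbb{R}^{d+1}$ and $\mathcal{G}$, so the same count applies after passing to the ineffective kernel; the one borderline it leaves open is $m=d$ with the whole translation subalgebra vanishing at $p$, which has to be excluded separately.

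Once dimensions are confined to $\{0,d+1\}$, I would rule out the fixed-point stratum, again using non-degeneracy. The set of points on $(d+1)$-dimensional orbits is open; if a fixed point $p_0$ were a limit of such orbits, then linearizing at $p_0$ (where all of $\mathcal{G}$ acts on $T_{p_0}M$ by the isotropy representation) the nearby orbits are modelled on the linear $\mathcal{G}$-orbits in $T_{p_0}M$, and a non-compact $\mathcal{G}$ of this type has nilpotent one-parameter subgroups producing orbits tangent to the light cone arbitrarily close to $p_0$, contradicting non-degeneracy of all orbits. Hence the orbit dimension is the constant $d+1$ and $\mathcal{N}$ is regular. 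For $c>0$ the same scheme works, the extra assumption that some orbit is Lorentzian serving to select the de~Sitter (rather than the definite, hyperbolic) model, so that the orbit metric is non-definite and its orthogonal definite, as Theorem~\ref{action.criterion.2} demands.

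The main obstacle, to my mind, is not the clean simple-group dimension count but the two degenerate situations it leaves over: excluding the borderline $c=0$, $m=d$ orbits on which the translation subalgebra degenerates, and excluding fixed points accumulated by full orbits. Both require a genuine use of non-degeneracy together with a slice or linearization analysis of the action near the bad stratum (or an appeal to analyticity), and this is where the real work lies; the subsequent reduction to Theorem~\ref{action.criterion.2} is essentially bookkeeping.
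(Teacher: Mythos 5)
Your proposal follows essentially the same route as the paper's own proof, which consists of three moves: each orbit is a $\mathcal{G}$-locally homogeneous pseudo-Riemannian space; the dimension count of Fact~\ref{maximal.dimension} (the stabilizer algebra injects into some $o(p,q)$ with $p+q=m\le d+1$, so $\binom{d+2}{2}-m\le\binom{m}{2}$, forcing $m=d+1$, full isotropy $o(p,q)$, and constant curvature); and then an appeal to Theorem~\ref{action.criterion.2}. Your middle paragraph is exactly this count, sharpened by the correct observation that the kernel of the isotropy representation is the largest ideal of $\mathcal{G}$ contained in the stabilizer $\mathfrak{h}$, and your first and last paragraphs are the same reduction to Theorem~\ref{action.criterion.2} (absolute irreducibility of the standard representation, Lorentzian orbits, definite orthogonal) that the paper performs.

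The difference is that you make the degenerate strata explicit and then leave them open, and here two things must be said. First, these leftover cases are genuine gaps, and your list of them is itself incomplete: for $c=0$ the count allows not only $m=d$ but also $m=d+1$ with the translation ideal $\mathbb{R}^{d+1}$ vanishing identically along the orbit; in that case the foliation could be regular and yet the orbit would only be homogeneous under $o(1,d)$ with small isotropy, so the constant-curvature and absolute-irreducibility conclusions --- hence the GRW conclusion itself, not merely regularity --- would fail. Both kernel-$\mathbb{R}^{d+1}$ cases can in fact be excluded by the generic-dimension hypothesis rather than by a slice analysis: the $1$-jets at $p\in\mathcal{O}$ of the translations form a space of at least two linearly independent skew endomorphisms annihilating the non-degenerate subspace $T_p\mathcal{O}$; since a linear space of skew endomorphisms whose values at every vector span at most a line is at most one-dimensional, at generic nearby points these jets contribute at least two extra orbit directions lying in $(T_p\mathcal{O})^\perp$, so nearby orbits have dimension $\ge m+2>d+1$, a contradiction. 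Second, your fixed-point argument is only correct as stated for $c>0$ (there, any embedding of $o(1,d+1)$ into $o(1,\dim M-1)$ is standard plus trivial, and the linearized action has degenerate light-cone orbits arbitrarily near the fixed point, violating non-degeneracy); for $c\le 0$ the right statement is stronger: linearization at a fixed point would give a faithful representation $\mathcal{G}\hookrightarrow o(1,\dim M-1)$, which does not exist ($o(2,d)$ has real rank two, while equivariance forces the Poincar\'e translations into $\mathbb{R}^{1,d}\otimes T$ with $T$ definite, and commutativity then forces them to vanish). Finally, you should be aware that the paper does none of this work either: it applies Fact~\ref{maximal.dimension} to an orbit under the tacit assumptions that $\mathcal{G}$ acts on it effectively and that the orbit has positive dimension, which is precisely the content of your borderline cases. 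So your plan is a more scrupulous version of the paper's argument, but as written it stops exactly where the paper's own proof is silent.
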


\begin{proof}
Observe that an orbit is a ${\mathcal G}$-locally
 homogeneous space.
So, the proof of the proposition follows from 
Theorem \ref{action.criterion.2}
and from the 
  following classical fact.

\begin{fact}
\label{maximal.dimension}
If  a pseudo-Riemannian manifold of dimension
$\leq d$, has a Killing algebra of the same dimension
as that of a pseudo-Riemannian manifold of constant 
curvature and dimension 
$d$, then this manifold is 
necessarily of   dimension $d$ and has the same 
constant curvature.
\end{fact}

\begin{proof}
Recall that all the orthogonal algebras 
$o(p,q)$, with $p+q = d^\prime$ have the same 
dimension, which equals in particular dim $o(d^\prime)$.
Let $x$ be a point of the given pseudo-Riemannian manifold.
Its stabilizer algebra
can be identified to a subalgebra 
of some $o(p,q)$, with $p+q \leq d$. 
But by hypothesis, 
this  stabilizer  has a  dimension 
$\ge $ dim $o(d)$. It  follows that 
$p+q= d$, and that the stabilizer is 
$o(p,q)$ itself. One deduces,  in particular,  that  the dimension
 of the manifold equals $d$. 
To check that the curvature is constant, one 
  observes that $O(p,q)$ acts 
transitively 
on the space of  spacelike 2-planes at $x$.
\end{proof}

\end{proof}

\subsection{Lightlike Killing fields.}

The following notion will be  useful.

\begin{definition} 
A Killing field 
$X$ on a pseudo-Riemannian manifold 
is called geodesic (resp. lightlike) if 
$\nabla_XX= 0$ (resp. 
$\langle X,X\rangle  =0$).

\end{definition}

\begin{fact}
\label{lightlike.geodesic}

 A Killing field $X$ is geodesic 
iff,  it has geodesic orbits, iff, it has constant 
length
(i.e. $\langle X, X\rangle $ is constant). In particular 
a lightlike Killing field is geodesic.

\end{fact}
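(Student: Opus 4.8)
The plan is to prove the equivalence of three conditions for a Killing field $X$: (a) $\nabla_X X = 0$; (b) the orbits of $X$ are geodesic; (c) $\langle X, X\rangle$ is constant. The key computational input is the Killing equation itself, which I would write as $\langle \nabla_u X, v\rangle + \langle u, \nabla_v X\rangle = 0$ for all vector fields $u, v$, or equivalently that the endomorphism $u \mapsto \nabla_u X$ is skew-symmetric with respect to $\langle,\rangle$. This skew-symmetry is the engine behind everything, and I expect it to do most of the work with very little computation.

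First I would address the equivalence of (a) and (c). Differentiating $\langle X, X\rangle$ in the direction of an arbitrary vector field $u$ gives $u.\langle X, X\rangle = 2\langle \nabla_u X, X\rangle$. By the skew-symmetry of the Killing endomorphism, $\langle \nabla_u X, X\rangle = -\langle u, \nabla_X X\rangle$, so we obtain $u.\langle X, X\rangle = -2\langle u, \nabla_X X\rangle$ for every $u$. If $\nabla_X X = 0$, the right-hand side vanishes identically, so $\langle X, X\rangle$ is constant; conversely, if $\langle X, X\rangle$ is constant, then $\langle u, \nabla_X X\rangle = 0$ for all $u$, and non-degeneracy of the metric forces $\nabla_X X = 0$. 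This single identity delivers (a) $\iff$ (c) cleanly.

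Next I would treat (a) $\iff$ (b). The orbits of $X$ are geodesic precisely when the acceleration $\nabla_X X$ is tangent to the orbit, i.e. proportional to $X$ at each point (since a reparametrized geodesic satisfies $\nabla_{\dot\gamma}\dot\gamma \parallel \dot\gamma$). So I must show that $\nabla_X X$ being a multiple of $X$ is equivalent to $\nabla_X X = 0$. Using the identity above with $u = X$ gives $X.\langle X, X\rangle = -2\langle X, \nabla_X X\rangle$; but also $X.\langle X,X\rangle = 2\langle \nabla_X X, X\rangle$, and comparing forces $\langle X, \nabla_X X\rangle = 0$. Thus $\nabla_X X$ is always orthogonal to $X$. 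If in addition $\nabla_X X = \lambda X$ for some function $\lambda$, then $0 = \langle X, \nabla_X X\rangle = \lambda \langle X, X\rangle$, which yields $\lambda = 0$ wherever $\langle X, X\rangle \neq 0$, hence $\nabla_X X = 0$ there and, by continuity (or analyticity), everywhere.

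The one genuine subtlety, and the step I expect to be the main obstacle, is the final assertion that a lightlike Killing field is automatically geodesic. Here $\langle X, X\rangle = 0$ is constant, so condition (c) holds trivially and (a) follows immediately from the equivalence already established. The delicate point worth flagging is that for a lightlike field $X$ is orthogonal to itself, so the orthogonality $\langle X, \nabla_X X\rangle = 0$ does not by itself pin down $\nabla_X X$; it is precisely the constancy of $\langle X,X\rangle$ (namely $\langle X,X\rangle \equiv 0$) rather than any transversality argument that forces $\nabla_X X = 0$. I would therefore present the lightlike case as a direct corollary of (c) $\Rightarrow$ (a), emphasizing that the argument is insensitive to the causal type of $X$ and uses only that the squared length is locally constant.
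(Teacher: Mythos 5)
Your core identity is exactly the paper's proof: from skew-symmetry of $\nabla X$ one gets $\langle \nabla_X X, Y\rangle = -\tfrac{1}{2}\, Y.\langle X,X\rangle$ for all $Y$, whence $\nabla_X X = 0$ iff $\langle X,X\rangle$ is constant; and, like the paper, you obtain the lightlike case as an immediate consequence of constancy of the (zero) length rather than from any transversality argument. Your closing remark on that point is precisely the right thing to emphasize, since it is what makes the fact applicable to the lightlike Killing fields that drive the main theorem. You go further than the paper in one respect: you actually prove the equivalence with "geodesic orbits," which the paper asserts but never writes out.

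It is in that extra implication that there is a gap. From $\nabla_X X = \lambda X$ and $\langle \nabla_X X, X\rangle = 0$ you conclude $\lambda = 0$ only on the open set $U = \{\langle X,X\rangle \neq 0\}$, and your final step, "by continuity (or analyticity), everywhere," presupposes that $U$ is dense. That is false in exactly the situation the fact is built for: if $X$ is lightlike on an open set --- for instance everywhere --- then $U$ is not dense (it may be empty), so nothing propagates by continuity; and analyticity is not available, since the fact is stated for arbitrary pseudo-Riemannian manifolds. The repair is the mechanism you already invoke for the corollary, applied locally: on the interior of the null locus $Z = \{\langle X,X\rangle = 0\}$ the length is locally constant, so the key identity gives $\nabla_X X = 0$ there directly. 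Since $U \cup \mathrm{int}(Z)$ is open and dense and $\nabla_X X$ is continuous, it vanishes everywhere, and then the identity shows $\langle X,X\rangle$ is constant globally. With that one-line patch your proof of the three-way equivalence is complete.
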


\begin{proof}
Let $\nabla$ denote the 
 Levi-Civita connection. 
Recall 
that a Killing field $X$ is characterized by the fact that 
$\nabla X$ is 
  antisymmetric, that is, 
  $\langle \nabla_YX,Z \rangle + \langle Y,\nabla_ZX \rangle  = 0$,
for any vector fields $Y$ and 
$Z$. In particular, 
 $ \langle \nabla_XX,Y \rangle + \langle \nabla_YX,X \rangle =0$,
and hence,  $ \langle \nabla_XX, Y \rangle  =
-(1/2)Y. \langle X,X \rangle $. Therefore, 
$\nabla_XX =0$ is equivalent 
to that $\langle X, X \rangle $ is constant.

\end{proof}

\subsubsection{Singularities.}

A geodesic Killing field with a somewhere 
non-vanishing length 
is non-singular (since it has a constant length). This fact 
extends to lightlike Killing fields 
on Lorentzian manifolds.

Indeed, near a singularity, the situation looks like that of the
 Minkowskian case. In this case, the Killing field
preserves (i.e. is tangent to)
the ``spheres''
around the singularity, but some of these
spheres are spacelike, contradiction!

As it is seen in this sketch of  proof, the fact actually  extends to 
non-spacelike Killing fields, i.e. 
$\langle X,X\rangle  \leq 0$:

\begin{fact} (\cite{BEM}, see also \cite{A-S} and 
\cite{Ze.math.Z}) 
 A non-trivial non-spacelike
Killing field on a Lorentz manifold is singularity free. 

\end{fact}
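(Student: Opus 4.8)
The plan is to argue by contradiction: assume the non-trivial Killing field $X$ has a zero at a point $p$, and derive that $X$ must be spacelike at points arbitrarily close to $p$, contradicting the standing hypothesis $\langle X,X\rangle\le 0$. The starting point is the local normal form of a Killing field near one of its zeros. Since the flow $\phi^t$ of $X$ fixes $p$, its differential at $p$ is a one-parameter group $e^{tA}$ of linear isometries of $(T_pM,\langle,\rangle_p)$, where $A=\nabla X|_p$ is antisymmetric for the Lorentz form $q:=\langle,\rangle_p$. First I would recall the standard rigidity fact that a Killing field is determined by its $1$-jet $(X(p),\nabla X|_p)$ on a connected manifold; since $X\not\equiv0$ while $X(p)=0$, this forces $A\neq0$.

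Next I would transfer the problem to the tangent space via the exponential map. Because an isometry fixing $p$ commutes with $\exp_p$, one has $\phi^t(\exp_p v)=\exp_p(e^{tA}v)$; differentiating at $t=0$ yields $X(\exp_p v)=d(\exp_p)_v(Av)$, i.e. $X$ is $\exp_p$-related to the linear field $v\mapsto Av$. Computing the length and using $d(\exp_p)_0=\mathrm{id}$ together with continuity of the metric gives the leading-order expansion $\langle X,X\rangle(\exp_p(sv))=s^2\,q(Av,Av)+o(s^2)$ as $s\to0$. Thus the causal type of $X$ near $p$ is governed, to first order, by the quadratic form $v\mapsto q(Av,Av)$ on $T_pM$.

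The geometric heart is then a one-line piece of Lorentzian linear algebra, which is exactly the ``spheres are spacelike'' picture of the sketch above. Antisymmetry of $A$ gives $q(Av,v)=0$, so $Av\in v^{\perp}$. Since $A\neq0$ and the timelike cone is open — hence not contained in the proper subspace $\ker A$ — I may pick a timelike $v$ with $Av\neq0$. For such $v$ the complement $v^{\perp}$ is positive definite, so $Av$ is spacelike and $q(Av,Av)>0$. By the expansion, $\langle X,X\rangle(\exp_p(sv))>0$ for all small $s>0$: the field is genuinely spacelike at points converging to $p$, the desired contradiction. In the language of the sketch, the timelike geodesic pseudo-sphere through $\exp_p(sv)$ is a Riemannian (spacelike) hypersurface to which $X$ is tangent, forcing $X$ to be spacelike there.

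The only genuine obstacle is making rigorous the passage from the infinitesimal model to the manifold, namely controlling the higher-order terms so that the sign of $q(Av,Av)$ really determines the sign of $\langle X,X\rangle$ near $p$. This is harmless because the argument is purely local: $\exp_p$ is a diffeomorphism on a small ball and the ambient metric there is a small perturbation of $q$, so the $s^2$ term dominates. I would emphasize that no completeness or global assumption enters, and that the curvature sign of the normal factor plays no role whatsoever — only the Lorentzian signature (one timelike direction, whose orthogonal complement is definite) is used, which is precisely why the statement holds on an arbitrary Lorentz manifold.
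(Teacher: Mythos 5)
Your proof is correct, and its geometric core is the same as the paper's: the paper proves this fact by a two-sentence sketch, saying that near a singularity the field is tangent to the ``spheres'' around the fixed point and that some of these spheres are spacelike, ``contradiction''. Your linearization $A=\nabla X|_p$, the relation $X(\exp_p v)=d(\exp_p)_v(Av)$, and the expansion $\langle X,X\rangle(\exp_p(sv))=s^2\,q(Av,Av)+o(s^2)$ are exactly the infinitesimal version of that picture: $q(Av,v)=0$ encodes tangency to the pseudo-spheres, and positive definiteness of $v^\perp$ for timelike $v$ encodes the fact that the spheres inside the light cone are spacelike. Where your route genuinely improves on the paper's sketch is the non-triviality step. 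Taken literally, the sketch only shows that a non-spacelike field tangent to a spacelike hypersurface must \emph{vanish} there, so it yields $X\equiv 0$ on the open solid cone near $p$, and one must still invoke the rigidity of Killing fields to contradict $X\not\equiv 0$; the paper leaves this implicit. You place that rigidity up front -- the $1$-jet determination of Killing fields gives $A\neq 0$ -- and then obtain points arbitrarily close to $p$ where $X$ is strictly spacelike, a direct contradiction with $\langle X,X\rangle\le 0$. The trade-off is minor: the synthetic version needs the Gauss lemma to see that the interior spheres are spacelike, while yours needs only the harmless continuity estimate making the $s^2$-term dominate; both use nothing beyond the Lorentz signature, confirming your closing remark that the statement holds on an arbitrary Lorentz manifold.
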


\subsubsection{Curvature.}

\begin{fact}
\label{curvature}
Let $X$ be a geodesic Killing field, then, 
for any vector $Y$, 
 $$\langle R(X,Y)X, Y\rangle  = \langle \nabla_YX, \nabla_YX\rangle $$ 

If $M$ is {\em Lorentzian} or 
{\em Riemannian}, and  $X$ is non-spacelike
(i.e. $\langle X,X\rangle  \leq 0$), then 
$\langle R(X,Y)X, Y\rangle  \geq 0$. 
In particular,   $Ric(X,X) \geq 0$, with 
equality (i.e.  everywhere  $Ric(X,X)= 0$),   iff, 
the direction of $X$ is parallel.

In the case $M$ is lorentzian and 
$X$ is lightlike, the curvature of any non-degenerate
2-plane containing $X$ is 
$\leq 0$.  

\end{fact}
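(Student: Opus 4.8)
The plan is to use the two hypotheses on $X$ in tandem. Since $X$ is a Killing field, the endomorphism $A := \nabla X$ is skew-adjoint, i.e. $\langle \nabla_Y X, Z\rangle = -\langle \nabla_Z X, Y\rangle$ (as recalled in the proof of Fact~\ref{lightlike.geodesic}); and since $X$ is geodesic, $\nabla_X X = AX = 0$, so $X \in \ker A$. For the first identity I would expand $R(X,Y)X$, use $\nabla_X X = 0$ (whence $\nabla_Y\nabla_X X = 0$) to reduce it to $\nabla_X\nabla_Y X - \nabla_{[X,Y]}X$, and then pair with $Y$. Invoking the skew-symmetry of $A$, the relation $\langle \nabla_Y X, Y\rangle = 0$, and $[X,Y] = \nabla_X Y - \nabla_Y X$, all terms cancel except $\langle \nabla_Y X, \nabla_Y X\rangle$; with the paper's curvature sign convention this is exactly $\langle R(X,Y)X,Y\rangle = \langle \nabla_Y X, \nabla_Y X\rangle$. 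This is the Bochner identity specialized to a geodesic Killing field, and the only real care is the sign bookkeeping.

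The decisive observation for the sign of the right-hand side is that $\langle AY, X\rangle = -\langle Y, AX\rangle = 0$, so $\mathrm{Im}\,A \subseteq X^\perp$. When $X$ is non-spacelike the induced metric on $X^\perp$ is positive semidefinite (positive definite when $X$ is timelike, semidefinite with radical $\R X$ when $X$ is lightlike), and in the Riemannian case this is automatic; hence $\langle \nabla_Y X, \nabla_Y X\rangle = |AY|^2 \ge 0$, and therefore $\langle R(X,Y)X, Y\rangle \ge 0$.

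For the Ricci statement, note that $\beta(Y,Z) := \langle R(X,Y)X, Z\rangle$ is symmetric (curvature pair-symmetry) and polarizes the quadratic form above, so $\beta(Y,Z) = \langle AY, AZ\rangle$ and $Ric(X,X)$ is its metric trace. I would compute this trace in a frame adapted to $X$: a unit timelike vector proportional to $X$ together with a spacelike orthonormal basis of $X^\perp$ if $X$ is timelike, or a null frame $(X, \xi, f_1, \dots)$ with $\langle X, \xi\rangle = 1$ and the $f_a$ spacelike if $X$ is lightlike. In both cases the $X$-direction contributes nothing, since $AX = 0$ gives $\beta(X, \cdot) = 0$, while the spacelike directions contribute $\sum_a |Af_a|^2 \ge 0$; this yields $Ric(X,X)\ge 0$. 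For the equality clause, $Ric(X,X) \equiv 0$ forces $|Af_a|^2 = 0$ for each spacelike frame vector. In the timelike case $X^\perp$ is definite, so every $Af_a = 0$ and then $A \equiv 0$, i.e. $X$ is parallel (a fortiori its direction). The lightlike case is where I expect the main obstacle: $|Af_a|^2 = 0$ only forces $Af_a$ into the radical $\R X$ of the degenerate form on $X^\perp$, and one must argue separately that the line field $\R X$ is then parallel. This is precisely the point where the mere semidefiniteness (rather than definiteness) of $X^\perp$ bites, since the metric trace of a positive-semidefinite form can vanish without the form itself vanishing.

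The last assertion then follows immediately from the sign computation. For a non-degenerate $2$-plane $P = \mathrm{span}(X, Y)$ containing the lightlike vector $X$, the Gram determinant is $\langle X,X\rangle\langle Y,Y\rangle - \langle X,Y\rangle^2 = -\langle X,Y\rangle^2$, which is nonzero by non-degeneracy and hence strictly negative. Since $X$ is non-spacelike, the numerator $\langle R(X,Y)X, Y\rangle = |\nabla_Y X|^2$ is nonnegative, so the sectional curvature $K(P)$, being the quotient of a nonnegative numerator by a negative Gram determinant, is $\le 0$.
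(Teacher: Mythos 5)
Your derivation of the first identity is correct but takes a genuinely different route from the paper's. You run the algebraic, Bochner-type computation (expand $R(X,Y)X$, use $\nabla_XX=0$, the skew-symmetry of $A=\nabla X$, and $\langle\nabla_YX,Y\rangle=0$), deferring the sign to the paper's curvature convention, which is the right thing to do since the paper's convention is only fixed implicitly by its sectional-curvature formula. The paper instead argues geometrically: it saturates a geodesic tangent to $Y$ by the flow of $X$, shows the resulting ruled surface $S_\gamma$ is intrinsically flat with all covariant derivatives of the adapted frame normal to $S_\gamma$, and reads the identity off the Gauss equation (with an approximation argument when $S_\gamma$ is degenerate). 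Your route is shorter and avoids the degenerate-surface issue; the paper's buys the geometric picture of a flat ruled surface whose second fundamental form is $\nabla_YX$. Your treatment of the trace is in fact \emph{more} careful than the paper's: the paper deduces $Ric(X,X)\geq 0$ from ``$\langle A(Y),Y\rangle\geq 0$ for all $Y$ implies the trace is $\geq 0$,'' which is not a valid implication for Lorentzian metric traces (a positive semidefinite form can have negative Lorentzian trace); your null-frame computation, which uses $\beta(X,\cdot)=0$ to kill the cross term $\beta(X,\xi)$, is the correct argument. The nonnegativity step via $\mathrm{Im}\,A\subseteq X^\perp$ and the final sectional-curvature sign via the negative Gram determinant coincide with the paper's reasoning.

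The obstacle you flag in the lightlike equality case is a genuine gap, but you should know it is a gap in the paper as well, not only in your attempt. The paper asserts it is ``straightforward'' that $Ric(X,X)\equiv 0$ forces $\nabla_YX$ to be isotropic for \emph{every} $Y$; as you observe, the trace only controls $|Af_a|^2$ for the screen vectors $f_a$, and never sees $A\xi$, so this does not follow from the pointwise algebra. In fact the implication is false. On $\{x>0\}\subset\mathbb{R}^3$ with the Lorentzian metric $g=2x\,dt\,dy+dx^2+dy^2$, the field $X=\partial_t$ is a lightlike Killing field; the nonzero Christoffel symbols are $\Gamma^t_{tx}=\tfrac{1}{2x}$, $\Gamma^t_{xy}=-\tfrac{1}{2x^2}$, $\Gamma^y_{xy}=\tfrac{1}{2x}$, $\Gamma^x_{ty}=-\tfrac12$, from which $\nabla_{\partial_x}X=\tfrac{1}{2x}X$ but $\nabla_{\partial_y}X=-\tfrac12\partial_x$, so the line field $\mathbb{R}X$ is \emph{not} parallel, while $Ric(X,X)=-\Gamma^\mu_{t\lambda}\Gamma^\lambda_{\mu t}=0$ identically (both factors in each summand never vanish simultaneously). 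So in the lightlike case only the implication ``direction parallel $\Rightarrow Ric(X,X)=0$'' survives; the converse half of the equality clause holds as stated only when $X$ is timelike (or $M$ is Riemannian), which is precisely the case your definiteness argument settles completely. In short: your proof is correct and complete for everything except the lightlike equality clause, and the step you could not close there cannot be closed.
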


\begin{proof}
Let $\gamma$ be a geodesic 
tangent to $Y$.
Consider the surface $S_\gamma$
obtained by saturating $\gamma$ by the flow of $X$, i.e. if 
$\phi^t$ is the flow of $X$, then $S_\gamma = 
\cup_t \phi^t(\gamma)$
(here we assume that $X$ is transversal to
$\gamma$).

Take a geodesic parameterization 
of 
$\gamma$, and continue to denote 
by  $Y$,  the vector field 
on $S_\gamma$,  obtained first, by  parallel 
translating along $\gamma$, and then, 
saturating by $\phi^t$ (along $S_\gamma$). 

We have: $X$ and $Y$ commute,  
$\nabla_YY =0$, 
and  $\nabla_XX= 0$ (since $X$ 
is 
a geodesic Killing field). It remains to estimate 
$\nabla_XY$ ($= \nabla_YX$).
We have $0= Y\langle X,X\rangle = 2\langle \nabla_YX,X\rangle $
(since $\langle X, X \rangle$ is constant by Fact \ref{lightlike.geodesic}) and 
$X\langle Y,Y\rangle  = 2 
\langle \nabla_XY,Y\rangle $, 
since by construction  $\langle Y, Y \rangle$ 
is constant along $S_\gamma$.
Therefore, 
$\nabla_XY$ ($= \nabla_YX$) is orthogonal
to $S_\gamma$.

One may restrict  consideration to  the case where $S_\gamma$ is non-degenerate,
since, if not, one  may  approximate $S_\gamma$ by 
non-degenerate $S_{\gamma_n}$, by choosing an
appropriate sequence of geodesics $\gamma_n$.

The previous calculation implies
 that $S_\gamma$ is intrinsically  flat, since 
the orthogonal projection of the ambient connection 
vanishes (all the covariant derivatives obtained 
from 
$X$ and $Y$ are orthogonal to $S_\gamma$)..

The  curvature equality
 follows from the Gau{\ss} equation.

Now, $\nabla_XY$ is orthogonal to $X$, and hence  it is
spacelike whenever  $X$ is non-spacelike and 
 $M$
is Riemannian or Lorentzian.

Recall that $Ric(X,X)$ equals the trace of the linear 
endomorphism $Y  \to A(Y) = R(X, Y)X$. Now, 
$\langle A(Y), Y\rangle  \geq 0$ implies that trace$(A) \geq 0$, and it is
also straightforward to see 
that if $Ric(X, X)= 0$, then 
$\nabla_YX  0$ is isotropic  for all $Y$.  We  have in addition that 
$\langle \nabla_Y X, X \rangle = 0$, and hence $\nabla_Y X $ is proportional 
to $X$. This is exactly the analytic translation of the fact that 
the direction field determined by    
$X$ is parallel.

Finally, the sectional curvature of the plane generated by $X$ and $Y$
is  $\frac {\langle R(X,Y)X,Y\rangle }{\langle X,X\rangle
 \langle Y,Y\rangle -\langle X,Y\rangle ^2}$, which has the 
opposite sign of 
$\langle R(X,Y)X,Y\rangle $.

\end{proof}

\subsubsection{The constant curvature case.}

Let ${\Bbb R}^{p,q}$ denote 
${\Bbb R}^n$ ($n = p+q$),  
endowed with the standard  form
 $Q = -x_1^2-\ldots-x_p^2 + x_{p+1}^2 + \ldots x_n^2$, 
 of signature 
$(p, q)$. 
A Killing field $X$ on 
${\Bbb R}^{p,q}$ is of the  form
$x \to Ax + a$, where $a \in {\Bbb R}^n$, 
and $A \in o(p, q)$.  Recall that 
$A \in o(p, q)$, iff, 
$AJ + JA^*=0$, where

$$J= \left(
\begin{array}{cc}
-I_p & 0 \\
0 & I_q
\end{array}
\right)$$


We have, $\nabla_XX = A^2$, 
and hence, $X$ is geodesic, iff, 
$A^2 = 0$.  

In the Lorentzian case (i.e. the Minkowski space),  
 $p =1$, the equation $A^2 =0$,  has no non-trivial
solution, that is, if $A \in o(1, p)$, and $A^2 =0$,
then,  $A= 0$. One may show this by a 
straightforward calculation, or by applying 
Fact \ref{curvature} to $S^{1,p}(+1)$,  which will
be   defined below.
It follows that a 
geodesic Killing field is  parallel, i.e. it  has  
the form $X: x \to a$, and it is
lightlike if furthermore  $a$ is isotropic.

In the non-Lorentzian case, non-trivial solutions of $A^2 =0$
exist.
Let us  consider the case of  ${\Bbb R}^{2,2}$. 
The standard form $Q$ is
equivalent to 
$Q^\prime  = dx dz + dy dt$.
Consider $\phi^s(x, y, z, t)=
(x, y, z+sx, t+sy)$. This is a one-parameter group
of orthogonal transformations of $Q^\prime$. Its infinitesimal
generator:

$$B= \left(
\begin{array}{cccc}
0&0&1&0  \\
  0&0&0&1  \\
 0&0&0&0 \\
  0&0&0&0 
 \end{array}
\right)$$


 satisfies $B^2 =0$. Thus, a conjugate $A$  of 
$B$ belongs to $o(2,2)$ and satisfies $A^2 =0$
A standard argument shows to that $o(2,2)$ is in fact generated 
by elements satisfying the equation $A^2= 0$. By the same argument one proves:

\begin{fact}
\label{rank}
 For $p \geq 2$, $q \geq 2$, $o(p, q)$
is generated (as a linear space) by its elements 
satisfying $A^2 =0$. (Note that  the condition on $p$ and $q$ 
is equivalent to 
that $o(p, q)$ has real rank $\geq 2$).

\end{fact}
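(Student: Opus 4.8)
The plan is to prove that the cone $\mathcal N=\{A\in\so(p,q):A^2=0\}$ \emph{linearly spans} $\so(p,q)$. The basic building blocks are the rank-two maps $A_{u,v}\colon x\mapsto Q(u,x)\,v-Q(v,x)\,u$, which lie in $\so(p,q)$ for all $u,v$ and which (via the linear isomorphism $u\wedge v\mapsto A_{u,v}$ from $\Lambda^2\R^{\,p,q}$, both spaces having dimension $\binom{p+q}{2}$) already span $\so(p,q)$. A direct computation gives $A_{u,v}^2=0$ precisely when $\{u,v\}$ spans a totally isotropic plane, i.e. $Q(u,u)=Q(v,v)=Q(u,v)=0$. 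The hypothesis $p,q\ge2$ is exactly what forces the Witt index (equivalently the real rank) to be $\ge2$, so such planes exist and $\mathcal N\neq\{0\}$.

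First I would note that $\mathcal N$ is invariant under conjugation by $\O(p,q)$, since $(gAg^{-1})^2=gA^2g^{-1}$; hence $V:=\Span\,\mathcal N$ is an $\O(p,q)$-invariant subspace, that is, an ideal of $\so(p,q)$. This reduces the statement to a single model: it suffices to prove $V\supseteq\so(2,2)$ after fixing two mutually orthogonal hyperbolic planes $H_1=\Span(u_1,v_1)$ and $H_2=\Span(u_2,v_2)$ inside $\R^{\,p,q}$ (available because $\min(p,q)\ge2$), and then letting the pair of planes vary.

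The heart of the matter is the model $\so(2,2)$, where the rank $\ge2$ hypothesis is genuinely used. In the split basis $(u_1,u_2,v_1,v_2)$ an element of $\so(2,2)$ has block form $\left(\begin{smallmatrix}P&Q\\ R&-P^{*}\end{smallmatrix}\right)$ with $Q,R$ antisymmetric $2\times2$ blocks; the off-diagonal ``root'' vectors (the blocks $Q$, $R$, and the strictly triangular $P$'s) are themselves of the form $A_{u,v}$ with a totally isotropic pair, hence already in $\mathcal N$. The obstacle is the toral directions $h_1=\mathrm{diag}(-1,0,1,0)$ and $h_2=\mathrm{diag}(0,-1,0,1)$, since no nonzero element of $\mathcal N$ is semisimple. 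The key observation is that a suitable \emph{difference} of nilpotents is semisimple. Writing $J_0=\left(\begin{smallmatrix}0&1\\-1&0\end{smallmatrix}\right)$, the matrix $\left(\begin{smallmatrix}I&J_0\\ J_0&-I\end{smallmatrix}\right)$ squares to zero, and subtracting from it the two nilpotent blocks $\left(\begin{smallmatrix}0&J_0\\0&0\end{smallmatrix}\right)$ and $\left(\begin{smallmatrix}0&0\\ J_0&0\end{smallmatrix}\right)$ produces $\mathrm{diag}(1,1,-1,-1)=-(h_1+h_2)$. Likewise, taking $Q=R=0$ realizes as $P$-part every trace-zero nilpotent $2\times2$ matrix, and since the nilpotent cone of $\sll(2,\R)$ already spans it — the nilpotents $e=\left(\begin{smallmatrix}0&1\\0&0\end{smallmatrix}\right)$, $f=\left(\begin{smallmatrix}0&0\\1&0\end{smallmatrix}\right)$, $n=\left(\begin{smallmatrix}1&1\\-1&-1\end{smallmatrix}\right)$ satisfy $\mathrm{diag}(1,-1)=n-e+f$ — one recovers $h_1-h_2$ as well. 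Together these fill out the two toral directions, so $V\supseteq\so(2,2)$.

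Finally I would propagate this to all of $\so(p,q)$: moving $(H_1,H_2)$ over all pairs of orthogonal hyperbolic planes, and coupling one hyperbolic plane to a vector of the (definite) orthogonal complement, shows that every basis element $A_{e_i,e_j}$ lies in $V$, whence $V=\so(p,q)$. Alternatively, once $V$ is known to be a nonzero ideal, one may simply invoke the simplicity of $\so(p,q)$ for $p,q\ge2$, the sole non-simple case $(p,q)=(2,2)$ being exactly the model settled above. The one point demanding care is the bookkeeping that the mixed blocks coupling the hyperbolic planes to the definite part are reached; these come from rank-two nilpotents $A_{u,w}$ with $u$ isotropic in some $H_i$, $w$ in the complement, and $Q(u,w)=0$, so they present no real difficulty once the toral part has been produced.
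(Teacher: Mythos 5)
Your proof is correct, and it actually supplies the argument that the paper only gestures at: the paper's entire treatment consists of exhibiting one shear $B$ with $B^2=0$ in $o(2,2)$ and then asserting that ``a standard argument'' gives the spanning, both for $o(2,2)$ and, ``by the same argument,'' for general $o(p,q)$. Your write-up follows the same outline ($o(2,2)$ as the core case) but with genuinely new ingredients: (i) the characterization that $A_{u,v}^2=0$ exactly when $\Span(u,v)$ is totally isotropic, which makes the role of the rank (Witt index) $\geq 2$ hypothesis transparent; (ii) the explicit spanning of $\so(2,2)$, whose only delicate point is the two toral directions, reached by your observation that the square-zero matrix $\left(\begin{smallmatrix}I&J_0\\ J_0&-I\end{smallmatrix}\right)$ minus two root-space nilpotents equals $\mathrm{diag}(1,1,-1,-1)$, together with $\mathrm{diag}(1,-1)=n-e+f$ inside the $P$-block (I checked these matrix identities; they hold); and (iii) the remark that $V=\Span\mathcal{N}$ is invariant under conjugation, hence an ideal, so that simplicity of $o(p,q)$ for $p,q\geq 2$, $(p,q)\neq(2,2)$, finishes the general case at once. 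Point (iii) is a cleaner globalization than anything the paper hints at, and it correctly isolates $(2,2)$, the unique non-simple case, as the only one needing hands-on work.

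One sentence in your final paragraph is wrong, though it costs you nothing because it sits in the redundant alternative route. The mixed generators coupling a hyperbolic plane to the definite part of a Witt decomposition cannot be produced by nilpotents $A_{u,w}$ with $u$ isotropic and $w$ a nonzero vector of the definite complement: by your own criterion (equivalently, because the image of any square-zero element of $o(p,q)$ is totally isotropic), $A_{u,w}^2=0$ would force $Q(w,w)=0$, impossible for definite $w$. Put differently, $o(2,1)$ and $o(1,2)$ contain no nonzero square-zero elements, so ``coupling one hyperbolic plane to a vector of the complement'' yields nothing. The repair stays within your route (a): any two orthogonal anisotropic vectors $e_i,e_j$ lie in a nondegenerate subspace of signature $(2,2)$ --- complete the plane $\Span(e_i,e_j)$, of signature $(1,1)$, $(2,0)$ or $(0,2)$, by an orthogonal plane of complementary type, which exists precisely because $p,q\geq 2$ --- so $A_{e_i,e_j}$ lies in a copy of $\so(2,2)$, which you have already shown is contained in $V$. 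And in any case your route (b), the ideal-plus-simplicity argument, bypasses this bookkeeping entirely.
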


Consider 
$X_c= S^{p,q}(c) = \{x / Q(x,x) =c \}$. Then, for
$c \neq 0$, 
$X_c$ is non-degenerate, and the metric  
on it has signature $(p, q-1)$ if $c>0$, and
signature $(p-1, q)$ if $c<0$. 
It has curvature $\frac{1}{c}$, and Killing algebra 
$o(p, q)$. The universal pseudo-Riemannian 
space of the same signature and curvature, 
is a cyclic (maybe trivial) covering of $X_c$.
The Killing algebra of the universal cover is the same
as that of $X_c$ (see \cite{Wol}).

A Killing field $A \in  o(p, q)$ is geodesic
(with respect to  $X_c$), iff,  $A^2 = \lambda I$, 
for some constant $\lambda$. It is 
lightlike, iff, $A^2 =0$.

For example, in the Riemannian case, i.e.
$p = 0$, solutions  of $A^2 = \lambda I$
in $o(n)$ exist exactly if $n$ is  even, which give
Hopf fibrations on odd dimensional spheres.

For the Lorentz case, we have, with the previous notations,   
${\mathcal E}_c^{d+1}= S^{1, d+1}(c)$, if $c >0$, and 
${\mathcal E}_c^{d+1}$ is the universal cover 
of $S^{2, d}(c)$, if $c<0$.

In particular, a solution of $A^2 = 0$ in $o(1, p)$
corresponds to a lightlike Killing field
on 
the de Sitter space ($= {\mathcal E}_c^{d+1}= S^{1, d+1}(c)$). 
  But, since this latter space 
is Lorentzian and has positive curvature, such a non-trivial
Killing field does not exist by Fact \ref{curvature}.
Summarizing: 

\begin{fact} The de Sitter space has no lightlike (or even geodesic) Killing fields.

A lightlike Killing  field on the Minkowski space is parallel with 
isotropic translation vector. 

The Killing algebra of the anti de Sitter 
space is generated, as a linear space, by its lightlike Killing 
fields.

\end{fact}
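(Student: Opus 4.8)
The plan is to reduce all three assertions to linear algebra in the isometry algebras $o(1,d+1)$, $o(1,d)$ and $o(2,d)$, using the dictionary recorded above: a Killing field of $S^{p,q}(c)$ is $X(x)=Ax$ with $A\in o(p,q)$, it is geodesic exactly when $A^2=\lambda I$ for some scalar $\lambda$, lightlike exactly when $A^2=0$, and along the quadric one has $\langle X,X\rangle=-\lambda c$ (this follows from $A^2=\lambda I$ together with the defining relation $AJ+JA^\ast=0$ of $o(p,q)$). Thus the causal type of a geodesic Killing field is governed by the sign of $\lambda$, and whenever $X$ is non-spacelike the curvature estimates of Fact~\ref{curvature} become available.

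First I would treat de Sitter, ${\mathcal E}_c^{d+1}=S^{1,d+1}(c)$ with $c>0$, and rule out every nonzero geodesic Killing field according to the sign of $\lambda$ in $A^2=\lambda I$. If $\lambda=0$ the field is lightlike; since de Sitter is Lorentzian with positive curvature and since a null vector in Lorentzian signature always lies in some non-degenerate $2$-plane (take $Y$ with $\langle X,Y\rangle\neq 0$), the third part of Fact~\ref{curvature}, forcing nonpositive curvature on such planes, is contradicted. If $\lambda>0$ then $\langle X,X\rangle=-\lambda c<0$, so $X$ is a timelike geodesic Killing field; but positive curvature makes $Ric(X,X)$ negative on timelike vectors, contradicting $Ric(X,X)\ge 0$ from the second part of Fact~\ref{curvature}. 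The remaining case $\lambda<0$ is the main obstacle, because then $X$ is spacelike and Fact~\ref{curvature} gives no direct handle: here I would argue algebraically that $A/\sqrt{-\lambda}$ is a $Q$-orthogonal complex structure on ${\Bbb R}^{1,d+1}$, and that such a structure forces both indices of the signature to be even (each invariant plane $\mathrm{span}(u,Ju)$ carries a definite form, by $Q(u,Ju)=0$ and $Q(Ju,Ju)=Q(u,u)$). Since $(1,d+1)$ has odd first index this is impossible, so $A=0$; hence de Sitter has no nonzero geodesic, and a fortiori no lightlike, Killing field.

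Next, on Minkowski space ${\Bbb R}^{1,d}$ a Killing field has the form $X(x)=Ax+a$ with $A\in o(1,d)$. If $X$ is lightlike then $\langle X,X\rangle\equiv 0$ is constant, so $X$ is geodesic by Fact~\ref{lightlike.geodesic}, whence $A^2=0$ by the relation $\nabla_XX=A^2$ recorded above. But $A^2=0$ has only the trivial solution in $o(1,d)$: read on $S^{1,d}(+1)$ such an $A$ would give a lightlike Killing field on a positively curved Lorentzian space, excluded exactly as in the previous paragraph. Hence $A=0$ and $X\equiv a$ is a parallel constant field, and lightlikeness says precisely that $a$ is isotropic.

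Finally, for anti de Sitter the Killing algebra is $o(2,d)$ (the same for $S^{2,d}(c)$, $c<0$, and for its universal cover), and its lightlike Killing fields correspond precisely to the elements $A\in o(2,d)$ with $A^2=0$. The standing hypothesis $d>1$ gives $d\ge 2$, so both indices of $o(2,d)$ are at least $2$ and Fact~\ref{rank} applies verbatim, showing that these elements span $o(2,d)$ as a vector space. Hence the Killing algebra of anti de Sitter is linearly generated by its lightlike Killing fields, which completes the plan.
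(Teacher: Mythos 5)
Your proposal is correct, and its core is the same as the paper's: identify Killing fields of the model spaces with elements $A$ of $o(1,d)$, $o(1,d+1)$, $o(2,d)$; characterize geodesic and lightlike fields by $A^2=\lambda I$ and $A^2=0$ respectively; kill the square-zero elements in Lorentzian signature by playing Fact~\ref{curvature} against the positive curvature of de Sitter; and span $o(2,d)$ by square-zero elements via Fact~\ref{rank}. Where you genuinely go beyond the paper is the parenthetical ``(or even geodesic)'' claim for de Sitter: the paper only rules out $A^2=0$ (the lightlike case) by the curvature argument and never addresses $A^2=\lambda I$ with $\lambda\neq 0$, whereas you dispatch $\lambda>0$ (timelike, via $Ric(X,X)\ge 0$ from the second part of Fact~\ref{curvature} against de Sitter's negative Ricci on timelike vectors) and $\lambda<0$ (spacelike, which is genuinely outside the reach of Fact~\ref{curvature}, via the observation that $A/\sqrt{-\lambda}$ would be a $Q$-orthogonal complex structure on ${\Bbb R}^{1,d+1}$, forcing both signature indices to be even --- impossible since the first index is $1$). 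So your write-up actually closes a small gap in the paper's own exposition, at the cost of one extra linear-algebra lemma; your identity $\langle X,X\rangle=-\lambda c$ along the quadric is also a clean way to organize the trichotomy that the paper leaves implicit. One pedantic point common to both treatments: on Minkowski space, ``geodesic iff $A^2=0$'' for $X(x)=Ax+a$ should really read ``$A^2=0$ and $Aa=0$,'' but this is harmless, since $A^2=0$ already forces $A=0$ in $o(1,d)$ and then $Aa=0$ is automatic.
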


\subsection{End of the proof of Theorem \ref{big.bang.math}}

Observe that if $X \in {\mathcal G}_c^{d+1}$ is lightlike, 
as a Killing field on ${\mathcal E}_c^{d+1}$, then 
its trivial extension  $\bar{X}$, 
is a lightlike Killing field 
on $M$.

Suppose by contradiction that
there is a degenerate orbit $N_0$ of 
the ${\mathcal G}_c^{d+1}$-action. 

From \S \ref{analytic.extension}, 
$N_0$ has dimension $\leq d+1$.
Observe first  that dim$N_0 > 0$, since 
lightlike Killing fields are singularity free.

The metric on $N_0$ is positive non-definite, with kernel 
of dimension 1 (since the metric on $M$ is Lorentzian). This 
determines a 1-dimensional foliation 
${\mathcal F}$, called the  {\bf characteristic}
 foliation of $N_0$. 
The tangent direction of 
${\mathcal F}$ is the unique isotropic direction 
tangent to 
$N_0$.
It then follows that if $X$ is a
lightlike Killing field, then the restriction
of $\bar{X}$ to $N_1$ is tangent to ${\mathcal F}$
(equivalently, the flow of such a Killing 
field preserves individually the leaves of ${\mathcal F}$). 
Therefore, from Fact  \ref{lightlike.geodesic},  
  the 
 leaves of
${\mathcal F}$ are lightlike geodesics (in $M$).

\subsubsection*{The anti de Sitter case.} In the case $c < 0$, 
${\mathcal G}_c^{d+1}$
 is generated by lightlike Killing fields, and hence 
${\mathcal G}_c^{d+1}$ itself preserves individually the leaves of
 ${\mathcal F}$. Thus, by definition,  $N_0$ 
has dimension 1. However, it is known that there is no 
${\mathcal G}_c^{d+1}$-homogeneous space of dimension 1. 
This  is particularly easy to see in 
the  present situation. Indeed, here, 
${\mathcal G}_c^{d+1}$ preserves the affine structure of 
the lightlike geodesic $N_0$, and hence 
${\mathcal G}_c^{d+1}$ embeds in the Lie algebra of the affine group
of ${\Bbb R}$, which is impossible.

\subsubsection*{The flat case.}  If 
$N_0$ has dimension 1, we get a contradiction 
as in the anti de Sitter case. If not (i.e. dim$N_0 >1$), 
consider the (local) quotient space $Q = N_0 /{\mathcal F}$.
(The global quotient does not  necessarily exist, but 
because we deal with infinitesimal actions, we can restrict 
everything to a small open subset of $M$). The ${\mathcal G}_0^{d+1}$-action 
on $N_0$  factors through a faithful  action
of $o(1,d)$ ($= {\mathcal G}_0^{d+1}/ {\Bbb R}^{d+1}$) on $Q$. 

Observe that $Q$ inherits  a natural Riemannian 
metric. Indeed, the Lorentz metric restricted  to $N_0$ 
is positive degenerate, with kernel
$T{\mathcal F}$.  But ${\mathcal F}$ is parameterized by any lightlike field 
$X \in {\mathcal G}_0^{d+1}$ (this 
is the meaning of the fact that the flow of $\bar{X}$ preserves 
individually the 
leaves of ${\mathcal F}$).  Therefore the projection 
of this metric on $Q$ is well defined.

This metric is invariant by 
the $o(1, d)$-action. As in the
proof of Fact \ref{maximal.dimension},  since dim$Q \leq d$, we 
have   dim$Q= d$, and furthermore,  $Q$ has constant curvature. Also, we
 recognize from
the list of Killing algebras of constant curvature manifolds that 
$Q$ has constant negative curvature, i.e. $Q$
is a hyperbolic space.

It then follows that dim$N_0 = d$, and in particular that 
the orbits of ${\mathcal G}_c^{d+1}$ determine  a  regular
foliation near $N_0$.

Now, the contradiction lies in the fact that 
$Q$ is hyperbolic, but the  analogous quotient 
for generic leaves of the ${\mathcal G}_0^{d+1}$-action, are 
flat. 
More precisely, let $X \in {\Bbb R}^{d+1} 
\subset {\mathcal G}_0^{d+1}$
be a translation timelike Killing field. 
Consider $M^\prime$ the (local)  space of orbits of $X$
(instead of the whole of  $M$, we take a small
open subset intersecting $N_0$, where everything is topologically trivial).
The ${\mathcal G}_0^{d+1}$-orbit foliation projects to a foliation 
${\mathcal G}^\prime$ of $M^\prime$. For example, $Q$ is a 
leaf of ${\mathcal G}^\prime$ which is 
just the projection of 
$N_0$. In fact, as in the case of $Q$, the
projection of the metric on 
the ${\mathcal G}_0^{d+1}$-orbits endows the 
 leaves of ${\mathcal G}^\prime$
  with a Riemannian metric.  Now, a generic 
leaf of ${\mathcal G}^\prime$ is (locally) isometric
to the quotient of the Minkowski space 
 ${\Bbb R}^{d,1}$ by 
a timelike translation flow, which is thus a  Euclidean space (of
dimension $d$). But the leaf $Q$ is hyperbolic
which contradicts the obvious continuity 
(in fact the  analyticity) 
of the 
  leafwise 
metric of ${\mathcal G}^\prime$. $\diamondsuit$


\end{document}